%% file: main.tex
\documentclass[reqno, 10pt]{amsart}

\input{Latex_Code/Packages}

\input{Latex_Code/Commands}

\input{Latex_Code/Environments}

\title[Bounds on Sobolev norms for fractional Schr\"{o}dinger equations]{Bounds on the growth of high Sobolev norms of solutions to the fractional nonlinear Schr\"{o}dinger equation on $\R^2$ and $\R^3$}

\author[A. Kapetanios]{Antonios Kapetanios}
\address{Mathematics Institute, Zeeman Building, University of Warwick, Coventry, CV4 7AL, United Kingdom}
\email{antonios.kapetanios@warwick.ac.uk}
\author[J. L. Rodrigo]{Jos\'{e} L. Rodrigo}
\address{Mathematics Institute, Zeeman Building, University of Warwick, Coventry, CV4 7AL, United Kingdom}
\email{j.rodrigo@warwick.ac.uk}
\author[V. Sohinger]{Vedran Sohinger}
\address{Mathematics Institute, Zeeman Building, University of Warwick, Coventry, CV4 7AL, United Kingdom}
\email{v.sohinger@warwick.ac.uk}

\date{}

\subjclass[2020]{35Q55} 
\keywords{fractional nonlinear Schr\"{o}dinger equation, cubic nonlinearity, Hartree-type nonlinearity, growth of Sobolev norms, resonant decomposition, upside-down I-method}

\begin{document}  

\input{Text/Abstract}

\maketitle

\input{Text/Main_Text}


\input{Text/Bibliography}
\end{document}

%% file: Latex_Code/Packages.tex
\usepackage{amsmath}
\usepackage{amssymb}
\usepackage{amsthm}
\usepackage{mathcomp}
\usepackage{moreenum}
\usepackage{textcomp}
\usepackage{mathtools}
\usepackage{faktor} 
\usepackage{thm-restate}

\usepackage{enumitem}

\usepackage{cases}

\usepackage{titletoc}

\usepackage{dsfont}

\usepackage{xcolor}

\usepackage{url}

 \usepackage[letterpaper,left=1in,right=1in,top=1.1in,bottom=1.1in]{geometry}

\usepackage[hidelinks,colorlinks=true, linkcolor=blue,citecolor=blue, urlcolor=blue]{hyperref}

\usepackage[super]{nth}

\usepackage{comment}

\usepackage{mathabx}

\usepackage{empheq}

\usepackage{cleveref}

%% file: Latex_Code/Commands.tex
\newcommand{\R}{\mathbb{R}}

\newcommand{\N}{\mathbb{N}}
\newcommand{\C}{\mathbb{C}}

\newcommand{\F}{\mathcal{F}}

\newcommand{\T}{\mathbb{T}}

\newcommand{\D}{\mathcal{D}}

\newcommand{\abs}[1]{\left| #1 \right|}

\newcommand{\pt}[1]{\left( #1 \right)}

\newcommand{\ra}{\rightarrow}

\usepackage{scalerel,stackengine}
\stackMath
\newcommand\reallywidehat[1]{%
\savestack{\tmpbox}{\stretchto{%
   \scaleto{%
\scalerel*[\widthof{\ensuremath{#1}}]{\kern-.6pt\bigwedge\kern-.6pt}%
     {\rule[-\textheight/2]{1ex}{\textheight}}
   }{\textheight}%
}{0.5ex}}%
\stackon[1pt]{#1}{\tmpbox}%
}
\newcommand\reallywidecheck[1]{%
\savestack{\tmpbox}{\stretchto{%
   \scaleto{%
\scalerel*[\widthof{\ensuremath{#1}}]{\kern-.6pt\bigwedge\kern-.6pt}%
     {\rule[-\textheight/2]{1ex}{\textheight}}
   }{\textheight}%
}{0.5ex}}%
\stackon[1pt]{#1}{\scalebox{-1}{\tmpbox}}%
}
\newcommand\reallywidetilde[1]{%
\savestack{\tmpbox}{\stretchto{%
   \scaleto{%
\scalerel*[\widthof{\ensuremath{#1}}]{\kern-.6pt\sim\kern-.6pt}%
     {\rule[-\textheight/2]{1ex}{\textheight}}
   }{\textheight}%
}{0.5ex}}%
\stackon[1pt]{#1}{\tmpbox}%
}

%% file: Latex_Code/Environments.tex
\theoremstyle{definition}
\newtheorem{theorem}{Theorem}[section]
\newtheorem{proposition}[theorem]{Proposition}
\newtheorem{lemma}[theorem]{Lemma}
\newtheorem{corollary}[theorem]{Corollary}

\newtheorem{remark}[theorem]{Remark}



\makeatletter
\renewcommand{\section}{\@startsection{section}{1}%
  \z@
  {2.0\baselineskip \@plus .3\baselineskip \@minus .2\baselineskip}
  {1.0\baselineskip \@plus .2\baselineskip}
  {\large\normalfont\centering\scshape}
}
\makeatother


%% file: Text/Abstract.tex
\begin{abstract}
 We prove polynomial bounds on the growth of high Sobolev norms of solutions to the defocusing fractional nonlinear Schr\"{o}dinger equation with cubic and Hartree-type nonlinearities in two and three dimensions. Our result is based on the \textit{upside-down I-method} \cite{CKSTT_Polynomial, CollianderKwonOh, Sohinger_S1, Sohinger_R, Sohinger_Hartree_2D} and the \textit{method of higher modified energies}. The analysis of the fractional case in higher dimensions is possible due to a higher-dimensional analogue of a \emph{resonance inequality}, first proved in \cite{DemirbasErdoganTzirakis} in one dimension, which allows us to control the nonresonant frequency contributions. 
\end{abstract}

%% file: Text/Main_Text.tex
\vspace{-1cm}

\section{Introduction.} 

\subsection{Statement of the problem and main results.}
In this paper, we consider the initial value problem (IVP) for the defocusing fractional nonlinear Schr\"{o}dinger equation (NLS) with cubic nonlinearity:
\begin{equation} \label{cubic_fractional_NLS}
\begin{cases}
\begin{aligned}
&i u_t - (-\Delta)^\alpha u = |u|^2u, \quad  & t\in\R, &\quad x\in \R^d, \\ 
&u(0,\cdot) =u_0 \in H^s(\R^d),  
\end{aligned}
\end{cases}
\end{equation}
where $\alpha \in (\frac{1}{2},1]$, and $\reallywidehat{(-\Delta)^\alpha u}(\xi) := |\xi|^{2\alpha} \widehat u(\xi)$. The IVP (\ref{cubic_fractional_NLS}) possesses conserved mass and energy, namely
\begin{align*} 
M(u(t)) &:= \| u(t)\|_{L^2}^2, \qquad\\[6pt]
E(u(t)) &:= \frac12\|(-\Delta)^{\alpha/2}u\|_{L^2}^2+\frac14\int_{\R^d} |u(t,x)|^4 \; dx. \qquad
\end{align*}

The $\dot{H}^s$ critical exponent of (\ref{cubic_fractional_NLS}) is
\[
s_c := \frac{d}{2}- \alpha.
\]
Local well-posedness of (\ref{cubic_fractional_NLS}) is established in \cite{ CazenaveBook, HongSire} for $s > s_c$, while global well-posedness of (\ref{cubic_fractional_NLS}) in $H^\alpha$ follows by energy conservation whenever
\begin{equation*} \label{alpha_condition_scaling}
 s_c < \alpha \quad \iff \quad \alpha > \frac{d}{4}.   
\end{equation*}
By persistence of regularity, we can further deduce global well-posedness in $H^s$ for any $s>\alpha$.  

Throughout this manuscript, we assume that
\begin{equation} \label{alpha_s_conditions}
   d \in \{2,3\}, \qquad \alpha \in (\frac{d}{4},1], \qquad \text{ and } \qquad s> \frac{d}{2}.
\end{equation}

The assumption $s>\frac{d}{2}$ is used later to prove local-in-time bounds for the solution  of (\ref{cubic_fractional_NLS})  (see Proposition \ref{Proposition_3.1_d.alpha}). Under assumptions  (\ref{alpha_s_conditions}), we always have  $s>\alpha$ since $\alpha \in (\frac{1}{2},1]$, and hence a global solution to (\ref{cubic_fractional_NLS}) exists throughout the range considered.

In this paper, we obtain polynomial bounds on the growth of high Sobolev norms for solutions to (\ref{cubic_fractional_NLS}) in the range (\ref{alpha_s_conditions}); that is,  for $s>1$ and $\alpha \in (\frac{1}{2},1]$ in two dimensions, and for $s>\frac{3}{2}$ and $\alpha \in (\frac{3}{4},1]$ in three dimensions. Our main result is as follows.
\begin{theorem} \label{Corollary_cubic_fractional_NLS}
Let $d \in \{2,3\}$ and let $u$ be the global solution of (\ref{cubic_fractional_NLS}) for $s>\frac{d}{2}$ and $\alpha \in (\frac{d}{4},1]$.
 Then, there exists a function $C_{s}=C_{s}(d,u_0)$, continuous on $H^\alpha(\R^d)$, such that for all $t\in\R$,\footnote{We write $x+$ for $x+\epsilon$ with $0<\epsilon\ll 1$.}
\begin{equation*}
\|u(t)\|_{H^s} \leq C_{s} \, (1+|t|)^{ \frac{2}{3(2\alpha -1)} s+} \; \|u_0\|_{H^s}, \qquad \text{ for }  d=2, 
\end{equation*}
and
\begin{equation*}
\|u(t)\|_{H^s} \leq C_{s}\, (1+|t|)^{ \frac{1}{4\alpha-3 } s+} \; \|u_0\|_{H^s}, \qquad \text{ for }  d=3.
\end{equation*}
\end{theorem}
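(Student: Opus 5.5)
We will follow the upside-down I-method combined with a higher modified energy. Fix $s>\frac d2$ and a large parameter $N\ge 1$, and define the operator $\mathcal D=\mathcal D_N$ as the Fourier multiplier with symbol $\theta_N(\xi)$ that equals $1$ for $|\xi|\le N$ and $(|\xi|/N)^s$ for $|\xi|\ge 2N$, smooth and radial in between. This symbol satisfies $\|\mathcal D u\|_{L^2}\lesssim \|u\|_{H^s}\lesssim N^s\|\mathcal D u\|_{L^2}$. By energy conservation and $H^\alpha$ well-posedness, $\|u(t)\|_{H^\alpha}\le C(u_0)$ uniformly in $t$; this is the source of the continuity of $C_s$ on $H^\alpha$. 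The goal is an almost conservation law for a modified energy $E^2(u)\sim\|\mathcal D u\|_{L^2}^2$ of the form
\[
E^2(u(t_0+\delta))-E^2(u(t_0)) \lesssim N^{-\gamma+}\,E^2(u(t_0))
\]
on intervals of length $\delta\sim 1$, with $\delta$ depending only on $\|u_0\|_{H^\alpha}$. Here $\gamma=\frac{3(2\alpha-1)}{2}$ for $d=2$ and $\gamma=4\alpha-3$ for $d=3$.

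To obtain this, we start from $E^1=\|\mathcal Du\|_{L^2}^2$ and compute $\frac{d}{dt}E^1$ on the Fourier side. This gives a quartic multilinear form with symbol $M_4=\theta^2(\xi_1)-\theta^2(\xi_2)+\theta^2(\xi_3)-\theta^2(\xi_4)$ on $\xi_1+\dots+\xi_4=0$. Following the method of higher modified energies, we split $M_4$ into a near-resonant part and a nonresonant part, using the resonance function $\Omega=|\xi_1|^{2\alpha}-|\xi_2|^{2\alpha}+|\xi_3|^{2\alpha}-|\xi_4|^{2\alpha}$. On the nonresonant set we add the correction $\lambda_4(M_4/\Omega)$, defining $E^2=E^1+\lambda_4(\widetilde M_4/\Omega;u)$. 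The derivative of $E^2$ is then a near-resonant quartic term plus a sextic term. Three estimates are needed:
\begin{enumerate}
\item[(i)] a pointwise bound on $M_4/\Omega$, so that $E^2\sim E^1$;
\item[(ii)] a bound on the near-resonant quartic term;
\item[(iii)] a bound on the sextic term.
\end{enumerate}
The key tool is the higher-dimensional resonance inequality announced in the abstract. Under the ordering $|\xi_1|\ge|\xi_2|\ge\dots$, with the two highest frequencies of size $\gtrsim N$, it should give $|\Omega|\gtrsim |\xi_1|^{2\alpha-1}|\xi_3|$ in the relevant configurations. Together with the mean value theorem bound $|M_4|\lesssim \theta(\xi_1)^2|\xi_3|/|\xi_1|$, this yields $|M_4/\Omega|\lesssim \theta^2(\xi_1)/|\xi_1|^{2\alpha}$. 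That bound gives (i) and converts the derivative gain into a gain of $N^{-(2\alpha-1)}$-type powers.

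The quartic and sextic terms will be estimated in $X^{s,b}$ spaces adapted to the dispersion $|\xi|^{2\alpha}$. We use local-in-time bounds $\|\mathcal Du\|_{X^{0,\frac12+}_\delta}\lesssim\|\mathcal D u(t_0)\|_{L^2}$ (Proposition \ref{Proposition_3.1_d.alpha}, which is where $s>\frac d2$ enters), together with fractional Strichartz and bilinear refined Strichartz estimates. The bilinear estimates gain roughly $N^{-\frac{d-1}{2}}$ adjusted by $\alpha$ when a high frequency interacts with a low one. The dimension-dependent exponents $\gamma$ come from balancing these gains. In $d=3$ the Strichartz estimates are weaker, and the condition $\alpha>\frac34$ is exactly what makes $\gamma>0$.

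Having the almost conservation law, we iterate. Since $E^2(u(0))\sim\|\mathcal Du_0\|_{L^2}^2\lesssim\|u_0\|_{H^s}^2$, we get $E^2(u(t))\lesssim E^2(u(0))$ for $t\lesssim N^{\gamma-}$. Choosing $N\sim T^{\frac1\gamma+}$, this gives
\[
\|u(T)\|_{H^s}\lesssim N^s\|\mathcal Du(T)\|_{L^2}\lesssim T^{\frac s\gamma+}\|u_0\|_{H^s},
\]
which is the claimed bound with exponents $\frac{2}{3(2\alpha-1)}s$ and $\frac{1}{4\alpha-3}s$.

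The main obstacle is the near-resonant quartic term and the sextic term in $d=3$. There, low-frequency factors must be measured only in $H^\alpha$ with $\alpha$ close to $\frac34$, so the resonance gain and the bilinear Strichartz gain must be combined carefully to reach the exponent $4\alpha-3$. I would first try a dyadic decomposition with the resonance inequality applied case by case: (a) two high and two low frequencies; (b) all four frequencies high, with angular separation. If a loss remains, I would use an angular decomposition to improve the bilinear estimate.
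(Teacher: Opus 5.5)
Your architecture matches the paper's: the operator $\mathcal D$, $E_2=E_1+\lambda_4(M_4;u)$ with $M_4$ supported on a nonresonant set, the three estimates (i)--(iii), and the final iteration. However, the proposal has a gap exactly at the step that fixes the exponents, and the resonance input it rests on is misstated. For configurations singled out only by frequency sizes, the bound $|\Omega|\gtrsim|\xi_1|^{2\alpha-1}|\xi_3|$ is false. First, $\Omega=0$ whenever $\xi_{12}=0$, whatever $|\xi_3|$ is. Second, in $d\ge 2$, $\Omega$ also vanishes on whole families of large frequencies with $\xi_{12}\perp\xi_{14}$. For example, take $\xi_1=(x,y)$, $\xi_2=(\zeta_1,\zeta_2)$, $\xi_3=(-\zeta_2,-\zeta_1)$ with $x-y=\zeta_2-\zeta_1$; then $|\xi_{123}|=|\xi_1|$ and $|\xi_2|=|\xi_3|$. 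What is true is Lemma \ref{Sufficient_and_necessary_nonperiodic}: $|\Omega|\gtrsim|\xi_{12}|\,|\xi_{14}|\,(|\xi_1|+|\xi_2|+|\xi_3|)^{2\alpha-2}$ on the set $|\cos\angle(\xi_{12},\xi_{14})|>\gamma$, and this holds only if $\gamma>\frac{1-\alpha}{\alpha}$. Correspondingly, the numerator must be bounded by $|\xi_{12}|\,|\xi_{14}|\,\theta^2(\xi_1)/|\xi_1|^2$, using the single or double mean value theorem, not by a factor $|\xi_3|$. With that, (i) and the $M_6$ bound of Lemma \ref{lemma_for_sextilinear_term_B} go through.

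The real missing idea concerns (ii). Since $\gamma$ cannot be taken small when $\alpha<1$, the resonant set contains an angular sector of fixed width. Neither of your two proposed tools helps there: the resonance inequality is unavailable on that set, and an angular refinement of the bilinear estimate gains only when the sector is thin. The paper uses the angle restriction on $\Omega_r$ only to get $N_1\sim N_2\gtrsim N$, and then extends the integral to all of $\Gamma_4$. The resonant term is bounded with no resonance gain at all:
\begin{itemize}
\item the symbol is $O\bigl(\frac{N_3^*}{N_1^*}\theta(N_1^*)\theta(N_2^*)\bigr)$;
\item one pairs $F_1F_3$ and $F_2F_4$ in $\|uv\|_{L^2_{t,x}}\lesssim N_2^{\frac{d-1}{2}}N_1^{-(\alpha-\frac12)}\|u\|_{X^{0,\frac12+}}\|v\|_{X^{0,\frac12+}}$, with the sharp cutoff $\chi$ handled by Proposition \ref{Proposition_2.5.d.alpha}.
\end{itemize}
Note that this estimate gains $N_1^{-(\alpha-\frac12)}$ at the high frequency but loses $N_2^{\frac{d-1}{2}}$ at the low one; it is not a gain of $N^{-\frac{d-1}{2}}$. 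The result is $(N_1^*)^{-2\alpha}(N_3^*)^{\frac d2+\frac12-\alpha}(N_4^*)^{\frac d2-\frac12-\alpha}$, which gives decay rates $(3\alpha-\frac32)-$ for $d=2$ and $(4\alpha-3)-$ for $d=3$. The sextic term gives the better rates $4\alpha-2$ and $8\alpha-6$. The comparison $E_2\sim E_1$ needs $4\alpha-d>0$, which is where $\alpha>\frac d4$ enters. Without this computation, your decay exponents are read off from the statement rather than derived.
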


Our analysis works more generally for the defocusing fractional NLS with Hartree-type nonlinearity, i.e.
\begin{equation}\label{equation_fractional_hartree}
\begin{cases}
\begin{aligned}
&i u_t - (-\Delta)^\alpha u = (V * |u|^2)\,u, \quad  & t\in\R, &\quad x\in \R^d, \\ 
&u(0,\cdot) =u_0 \in H^s(\R^d),  
\end{aligned}
\end{cases}
\end{equation}
where $\alpha \in (\frac{1}{2},1]$, and $V$ is an interaction potential satisfying $V\in L^1(\R^d),  V\geq 0$, and $V(x) = V(-x)$. The IVP (\ref{equation_fractional_hartree}) has conserved mass and energy, namely
\begin{align*} 
M(u(t)) &:= \| u(t)\|_{L^2}^2, \qquad\\[6pt]
E(u(t)) &:= \frac12\|(-\Delta)^{\alpha/2}u\|_{L^2}^2+\frac14\int_{\R^d} (V *|u|^2)(t,x) \, |u(t,x)|^2 \; dx. \qquad
\end{align*}

Local well-posedness of (\ref{equation_fractional_hartree}) can be obtained as in \cite{CazenaveBook, HongSire} for $s>s_c$, while global well-posedness for $s>\alpha$ follows by energy conservation and persistence of regularity. The main result for   (\ref{equation_fractional_hartree}) is as follows.

\begin{theorem} \label{Main_Theorem}
Let $d \in \{2,3\}$ and let $u$ be the global solution of (\ref{equation_fractional_hartree}) for $s>\frac{d}{2}$ and $\alpha \in (\frac{d}{4},1]$.
 Then, there exists a function $C_{s}=C_{s}(d,u_0)$, continuous on $H^\alpha(\R^d)$, such that for all $t\in\R$,
\begin{equation*}
\|u(t)\|_{H^s} \leq C_{s} \,(1+|t|)^{ \frac{2}{3(2\alpha -1)} s+} \; \|u_0\|_{H^s},  \qquad \text{ for }  d=2,
\end{equation*}
and
\begin{equation*}
\|u(t)\|_{H^s} \leq C_{s}\, (1+|t|)^{ \frac{1}{4\alpha-3 } s+} \; \|u_0\|_{H^s}, \qquad \text{ for }  d=3.
\end{equation*}
\end{theorem}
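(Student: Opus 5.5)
The plan is to follow the upside-down I-method combined with a higher modified energy. For $N \gg 1$ I will define the operator $\mathcal{D}$ by the Fourier multiplier $\theta(\xi)$. This multiplier equals $1$ for $|\xi|\le N$ and $(|\xi|/N)^s$ for $|\xi|\ge 2N$, and it is smooth and radial in between. With this choice
\[
\|\mathcal{D}u\|_{L^2}\sim_s N^{-s}\|u\|_{H^s}\lesssim \|\mathcal{D}u\|_{L^2}\, N^{s}\cdots
\]
up to the low-frequency part, and more precisely $\|u\|_{H^s}\lesssim N^s\|\mathcal{D}u\|_{L^2}$ modulo the conserved mass. The key point is that $E^1(u):=\|\mathcal{D}u\|_{L^2}^2$ is almost conserved.

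The first step is to differentiate $E^1$ in time using \eqref{equation_fractional_hartree}. This gives a quadrilinear expression on $\xi_1+\xi_2+\xi_3+\xi_4=0$ with symbol
\[
M_4=\theta(\xi_1)^2-\theta(\xi_2)^2+\theta(\xi_3)^2-\theta(\xi_4)^2,
\]
multiplied by $\widehat V(\xi_1+\xi_2)$; in the cubic case $\widehat V\equiv1$. Following the method of higher modified energies, I will set
\[
E^2=E^1+\lambda_4(-M_4/\Omega_\alpha),\qquad \Omega_\alpha=|\xi_1|^{2\alpha}-|\xi_2|^{2\alpha}+|\xi_3|^{2\alpha}-|\xi_4|^{2\alpha},
\]
on the nonresonant region, where $|\Omega_\alpha|$ is large compared with the size dictated by the frequencies. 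On the complementary near-resonant region I will leave the quadrilinear term in $\frac{d}{dt}E^1$. The higher-dimensional resonance inequality mentioned in the abstract, namely a lower bound of $|\Omega_\alpha|$ in terms of $|\xi_1+\xi_2|\,|\xi_1+\xi_4|$ times a power $|\xi_{\max}|^{2\alpha-2}$, guarantees that the symbol $M_4/\Omega_\alpha$ is bounded (with a gain). It follows that $E^2\sim E^1$ and that the boundary term is controlled. The time derivative $\frac{d}{dt}E^2$ then splits into two parts: the resonant quadrilinear piece, and sextilinear terms arising when the time derivative falls on the quartic correction.

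The second step is the local theory. Using Proposition \ref{Proposition_3.1_d.alpha} with $s>d/2$, I get on a time interval $\delta\sim 1$, depending only on the $H^\alpha$ norm (which is uniformly bounded by energy conservation), a bound $\|\mathcal{D}u\|_{X^{0,\frac12+}_\delta}\lesssim \|\mathcal{D}u(t_0)\|_{L^2}$ in fractional Bourdieu spaces adapted to $|\xi|^{2\alpha}$.

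The main obstacle is the third step: the decay estimate
\[
|E^2(t_0+\delta)-E^2(t_0)|\lesssim N^{-\beta}\,\|\mathcal{D}u\|^2_{X^{0,\frac12+}}.
\]
For this I will use a dyadic decomposition with $N_1\ge N_2\ge\dots$, with $N_1\gtrsim N$. For the resonant quartic term I will use the mean value theorem, which gives $M_4\lesssim \theta(N_1)^2\,N_2/N_1$ type cancellations. I will combine this with fractional bilinear Strichartz estimates, which give a gain $N_1^{-(\alpha-\frac12)}$ in $2$D and the analogue in $3$D, and with the constraint that $|\Omega_\alpha|$ is small on the resonant set. The sextilinear terms are handled by the symbol bound from the resonance inequality together with Strichartz and Sobolev embeddings, using the $H^\alpha$ control. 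I expect the resulting exponent to be $\beta=\frac{3(2\alpha-1)}{2}-$ for $d=2$ and $\beta=4\alpha-3-$ for $d=3$. Tracking the worst case in this counting is the delicate part, and it is where the constraint $\alpha>d/4$ enters.

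The last step is the iteration. From $E^2(t)\le E^2(0)+C\,T\,N^{-\beta}\sup E^2$, I will choose $N\sim T^{1/\beta}$ so that $E^2$ at most doubles on $[0,T]$. This gives $\|u(T)\|_{H^s}\lesssim N^{s}\|\mathcal{D}u(T)\|_{L^2}\lesssim T^{s/\beta}\|u_0\|_{H^s}$, which is the claimed exponent $\frac{2}{3(2\alpha-1)}s+$ for $d=2$ and $\frac1{4\alpha-3}s+$ for $d=3$. The cubic case is the special case $\widehat V\equiv1$. The bounds on $V$ are only used through $\|\widehat V\|_{L^\infty}\le\|V\|_{L^1}$, and defocusing ($V\ge0$) is used for the uniform $H^\alpha$ bound.
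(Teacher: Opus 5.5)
Your outline has the same architecture as the paper: the operator $\mathcal{D}$, $E_2=E_1+\lambda_4(M_4;u)$ with $M_4$ supported on a nonresonant set, Proposition \ref{Proposition_3.1_d.alpha}, $E_1\sim E_2$ for $\alpha>\frac d4$, bilinear and Strichartz bounds for the quartic and sextilinear terms, and iteration over $\sim N^{\beta}$ steps. However, the resonance step is misstated, and that is exactly where $d\geq 2$ differs from $d=1$. The inequality you invoke, $|\Omega_\alpha|\gtrsim|\xi_{12}|\,|\xi_{14}|\,|\xi_{\max}|^{2\alpha-2}$, is false without an angular restriction. Take $\xi_1=(x,y)$, $\xi_2=(\zeta_1,\zeta_2)$, $\xi_3=(-\zeta_2,-\zeta_1)$ with $x-y=\zeta_2-\zeta_1$. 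Then $|\xi_4|=|\xi_1|$ and $|\xi_3|=|\xi_2|$, so $\Omega_\alpha=0$, while $\xi_{12}\perp\xi_{14}$ are generically nonzero. The failure is already visible at $\alpha=1$, where $\Omega_1=-2\,\xi_{12}\cdot\xi_{23}$ vanishes on orthogonal pairs. The paper's Lemma \ref{Sufficient_and_necessary_nonperiodic} holds only when $|\cos\angle(\xi_{12},\xi_{14})|\geq\gamma$ with $\gamma>\frac{1-\alpha}{\alpha}$, and this threshold is sharp. The paper defines $\Omega_{nr}$ by this angle condition. Its proof needs a double mean value theorem for $|x|^{2\alpha}$ across the singularity at the origin, together with the bound $\sup_{|u|=1}|(\eta\cdot u)(\lambda\cdot u)|=\frac{1+|\cos\angle(\eta,\lambda)|}{2}|\eta|\,|\lambda|$. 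So your claim that the resonance inequality ``guarantees that $M_4/\Omega_\alpha$ is bounded'' is unjustified. If instead your nonresonant set is defined by that very size condition, the claim is vacuous.

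The actual missing idea is on the resonant side, and smallness of $|\Omega_\alpha|$ on $\Omega_r$ is not what gets used there. What you need is this: whenever $N_1^*\sim N_2^*\gg N_3^*$ on $\Omega_r$, the two top frequencies are one $u$ and one $\bar u$. Then $\theta^2(\xi_1)-\theta^2(\xi_2)$ cancels and the symbol is $O\bigl(\frac{N_3^*}{N_1^*}\theta(N_1^*)\theta(N_2^*)\bigr)$. Your stated gain $N_2/N_1$ is no gain at all, since $N_1^*\sim N_2^*$ always holds on $\Gamma_4$. If the top two were $\xi_1,\xi_3$, there would be no cancellation. The same bilinear argument then yields only $N^{-(2\alpha-1)+}$ in $d=2$, i.e.\ the exponent $\frac{s}{2\alpha-1}$ instead of $\frac{2s}{3(2\alpha-1)}$. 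In the paper the pairing comes from the angle condition: $|\xi_1|\gg|\xi_2|,|\xi_4|$ would make $\xi_{12}$ and $\xi_{14}$ nearly parallel. A size-based split can also be made to work, and it bypasses the resonance lemma. For example, take $\Omega_{nr}:=\{|\Omega_\alpha|\geq c_0\max_j|\xi_j|^{2\alpha}\}$ with $c_0$ small. The bounds of Lemmas \ref{Lemma_3.3} and \ref{lemma_for_sextilinear_term_B} then follow immediately. On the complement, the configuration $|\xi_1|\sim|\xi_3|\gg|\xi_2|,|\xi_4|$ forces $|\Omega_\alpha|\gtrsim\max_j|\xi_j|^{2\alpha}$, so it is nonresonant, which gives the pairing. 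You must, however, specify the split and prove the pairing; as written, the resonant estimate has no mechanism. Finally, the rates $\frac32(2\alpha-1)$ and $4\alpha-3$ are asserted rather than derived. They come from the resonant quartic term via the bilinear bound $N_2^{\frac{d-1}{2}}N_1^{-(\alpha-\frac12)}$, with the rough time cutoff handled through $\chi=a+b$. The sextilinear terms decay faster, like $N^{-(4\alpha-2)}$ and $N^{-(8\alpha-6)}$.
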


The proof of Theorem \ref{Main_Theorem} still holds if one formally takes $V=\delta$, and thus we can deduce in particular Theorem \ref{Corollary_cubic_fractional_NLS} from Theorem \ref{Main_Theorem}. Therefore, in this manuscript, we only prove Theorem \ref{Main_Theorem}.

We note that in \cite{HongSire}, the authors established, in particular, small-data scattering for solutions to the fractional NLS
\begin{equation*} 
\label{p_fractional_NLS} 
\begin{cases} 
\begin{aligned} &i u_t - (-\Delta)^\alpha u = |u|^{p-1} u, \quad & t\in\R, &\quad x\in \R^d, \\ &u(0,\cdot) =u_0 \in H^s(\R^d), 
\end{aligned} \end{cases} 
\end{equation*}
for $p>3$, when $d\geq 2$. This implies, in particular, uniform bounds on the $H^s$-norm of the solutions (see \cite[Appendix B]{Sohinger_Hartree_2D} for more details). However, to the best of our knowledge, for $\alpha \in (\frac{1}{2},1)$, the same scattering result has not been obtained for $p=3$.

Moreover, unless $\alpha > \frac{d}{2}$ or $\alpha = \frac{d}{2}$ with sufficiently small $L^2$-norm of the initial datum $u_0$, our analysis does not extend to the focusing case, since the energy is not coercive for $\alpha < \frac{d}{2}$, and thus energy conservation does not imply global control of the $H^\alpha$-norm of $u$. The regime $\alpha > \frac{d}{2}$ for $d\geq 2$, however, lies outside the  interval $\frac{1}{2} < \alpha \leq 1$ that we are considering.

\subsection{Motivation for the problem and known results} 

 The fractional Schr\"{o}dinger equation was first introduced by Laskin \cite{Laskin_Levy_paths, Laskin_fSE} in the context of fractional quantum mechanics. Laskin replaced the Brownian trajectories of the Feynman path integral with the more general L\'{e}vy $2\alpha$-stable paths, $\alpha \in (\frac{1}{2},1]$, leading to the fractional Schr\"{o}dinger equation in $\R^3$. Since then, well-posedness and other aspects of the fractional Schr\"{o}dinger equation with different types of nonlinearities have been studied extensively; see for instance \cite{ChoHajaiejHwangOzawa, DemirbasErdoganTzirakis, GuoSireWangZhao, GuoWang, HongSire, LeeLeeRoncal, MegretskiSkouloudis} and references therein.

Bounds on the growth of higher Sobolev norms for the NLS were first studied by Bourgain in \cite{Bourgain_Sobolev_norms}. By local well-posedness theory \cite{BourgainBook, CazenaveBook, GinibreVelo, TaoDispersive}, one can show that there exists $C, \tau >0$, depending only on the initial data $u_0$, such that
\begin{equation} \label{local_WP_bound}
    \|u(t + \tau)\|_{H^s} \leq C \| u(t) \|_{H^s},
\end{equation}
for all $t$.  Then, iterating (\ref{local_WP_bound}) yields the exponential bound
\begin{equation} \label{Mot_exponential}
\| u(t)\|_{H^s} \leq C e^{c|t|},   
\end{equation}
where $C,c >0 $ again depend only on the initial data $u_0$. Estimate (\ref{Mot_exponential}), however, can be improved for various nonlinear dispersive equations. For instance, in \cite{Bourgain_Sobolev_norms}, Bourgain improved (\ref{local_WP_bound}) to 
\[
\|u(t + \tau)\|_{H^s} \leq \|u(t)\|_{H^s} + C\|u(t)\|_{H^s}^{1-\delta},
\]
which then implies the polynomial bound
\begin{equation} \label{Mot_Bourgain_implied}
\|u(t)\|_{H^s} \leq C (1+|t|)^{\frac{1}{\delta}}.    
\end{equation}

 In later years, Staffilani \cite{Staffilani_KdV} showed the same norm bound (\ref{Mot_Bourgain_implied}) for solutions to the cubic NLS and the Korteweg–de Vries (KdV) equation, using multilinear estimates in $X^{s,b}$-spaces.  In \cite{Bourgain_Sobolev_norms_Schrodinger}, Bourgain studied the Schr\"{o}dinger equation
 \begin{equation} \label{Mot_Bourgain_Schrodinger}
  i u_t + \Delta u = Vu    
 \end{equation}
for a sufficiently regular potential $V$, and showed that solutions to (\ref{Mot_Bourgain_Schrodinger}) satisfy
\[
\| u(t)\|_{H^s} \leq C (1+|t|)^{\epsilon},   
\]
for any $\epsilon>0$. 

For Hartree-type nonlinearities,  the third author \cite{Sohinger_Hartree_2D} studied (\ref{equation_fractional_hartree}) with $s>1$ in the case $\alpha =1$ on $\T^2$ and on $\R^2$.  In the nonperiodic  case, in particular, he  obtained 
\[
\| u(t)\|_{H^s} \leq C (1+|t|)^{\frac{4}{7}s+}.
\]

In $\R^2$, Dodson \cite{Dodson_R2} proved scattering of solutions to (\ref{cubic_fractional_NLS}) with $\alpha =1$ for $s=0$. By the arguments in \cite[Appendix B]{Sohinger_Hartree_2D}, Dodson's result implies scattering of the solutions in $H^s$ for $s>1$. In $\R^3$,  Colliander et al. \cite{CKSTT_global_scattering} established scattering of solutions to (\ref{cubic_fractional_NLS}) with $\alpha=1$ for $s> \frac{4}{5}$. Consequently, in both cases the $H^s$-norm of the solution remains uniformly bounded in time; more precisely, there exists a constant $C>0$, depending the initial datum $u_0$, such that
\[
\sup_{t \in \R} \|u(t)\|_{H^s} \leq C,
\]
for $s>1$ in $\R^2$, and  $s>\frac{4}{5}$ in $\R^3$.

For time-dependent Schr\"{o}dinger equations, Maspero and Robert \cite{MasperoRobert}  established polynomial and logarithmic bounds for solutions of equations of the form
\[
i u_t = L(t) u(t)
\]
defined on a scale of Hilbert spaces $\{ \mathcal{H}^r\}_{r \in \R}$,
where $L(t)$ is a linear, time-dependent, and sufficiently regular operator.

In the opposite direction, Maspero \cite{Maspero} recently proved polynomial bounds from below for the class of abstract linear Schr\"{o}dinger-type equations
\begin{equation*} \label{Maspero_class}
i u_t  = K_0 u + V(t,x) u 
\end{equation*}
defined on a scale of Hilbert spaces $\{ \mathcal{H}^r\}_{r \in \R}$, where $K_0$ is a selfadjoint operator and $V$ is a smooth periodic potential. His results imply, in particular, that there exists a solution $u$ to the one-dimensional periodic half-wave equation 
\[
i u_t = (-\Delta)^{\frac{1}{2}}u + V(t,x) u, \qquad V(t,x) := \cos(t) \,v(x), \qquad v \in C^\infty(\T),
\]
satisfying
\[
\| u(t) \|_{H^r(\T)} \gtrsim (1+|t|)^r, \qquad t > T,
\]
for any $r>0$ and some $T>0$ large enough.

In the fractional case, Thirouin \cite{Thirouin} first showed polynomial bounds for the defocusing fractional cubic NLS  on the one-dimensional torus. More specifically, for $\alpha \in (\frac{1}{3},\frac{1}{2}) \cup( \frac{1}{2},1)$, he proved that
\[
\| u(t) \|_{H^{\alpha+n}(\T)} \lesssim (1+ |t|)^A, \qquad \text{ for all } \; n \in \N,
\]
for some $A$ depending on $n$ and $\alpha$. In later years, Erdo\u{g}an,  G\"{u}rel and Tzirakis \cite{ErdoganGurelTzirakis} obtained polynomial bounds for the focusing and defocusing fractional cubic NLS (\ref{cubic_fractional_NLS}) on $\T$ and $\R$. In the nonperiodic case, they showed in particular that
\[
\| u(t)\|_{H^s} \lesssim (1+|t|)^{\max\big(1, \frac{s-\alpha}{2\alpha - 1}+\big)}  , 
\]
for $\alpha \in (\frac{1}{2}, 1]$ and $s>\alpha$. More recently, Wang \cite{Wang} established polynomial bounds for the defocusing cubic NLS on $\T^d$, $d\geq 2$, extending the results of Thirouin \cite{Thirouin} to higher dimensions.

\subsection{Methodology} Our techniques are based on the \textit{upside-down I-method} \cite{CKSTT_Polynomial, CollianderKwonOh, Sohinger_S1, Sohinger_R, Sohinger_Hartree_2D} and the \textit{method of higher modified energies}. Following \cite{Sohinger_Hartree_2D}, we first define an \textit{upside-down I-operator} $\D=\D_N$, which, by definition, is a Fourier multiplier operator depending on a frequency threshold $N$, and  by construction satisfies
\[
 \| \D u\|_{L^2} \lesssim \|u\|_{H^s} \lesssim N^s \| \D u\|_{L^2}.
\]
Thus, in order to control the Sobolev norm of $u$, it is enough to control the quantity
\[
E_1(u) := \| \D u \|_{L^2}^2.
\]

We then define the \textit{modified energy} $E_2(u)$ by adding a correction term to $E_1(u)$. This quantity still satisfies $E_2(u) \sim \| \D u\|_{L^2}^2$, but it is even more slowly varying than $E_1(u)$. This allows us to obtain better polynomial bounds for the Sobolev norm of $u$. The quantities $E_1$ and $E_2$ are defined in Subsection \ref{higher_modified_energy}, and depend on a frequency threshold $N$ because of the definition of the operator $\D=\D_N$.

To bound the correction term in the definition of $E_2(u)$, we prove an appropriate \emph{resonance inequality}, which we use to control the nonresonant frequency contributions. Such an inequality was first proved to hold globally in \cite{DemirbasErdoganTzirakis} in one dimension, in contrast to the higher-dimensional case, which fails in a certain regime. The precise statement is given in Lemma \ref{Sufficient_and_necessary_nonperiodic}.

\subsection{Organization of the paper}

In Section \ref{Terminology_and_known_facts} we introduce some basic definitions and notation, and recall some known facts. In Section \ref{Estimates_Xsb} we prove linear and bilinear estimates for some function spaces, which we use throughout our analysis. Section \ref{Main_theorem_proof} is devoted to the proof of the main result, i.e. Theorem \ref{Main_Theorem}. In Section \ref{Appendix}, we prove local-in-time bounds for (\ref{equation_fractional_hartree}), a higher-dimensional resonance inequality that we use to prove Theorem \ref{Main_Theorem} in Section \ref{Main_theorem_proof}, and the Kato-Ponce inequality in $H^s\cap L^\infty$.

\subsection{Acknowledgments}

The authors would like to thank Nikolaos Tzirakis, Enno Lenzmann, Gigliola Staffilani, and Kay Kirkpatrick for several useful comments and discussions.

\subsection{Licence}

For the purpose of open access, the authors have applied a Creative Commons Attribution (CC BY) licence to any Author Accepted Manuscript version arising from this submission.

\section{Terminology and auxiliary results} \label{Terminology_and_known_facts} 
We denote by $\N$ the set $\{0,1,2, \ldots \}$.
We write $A\lesssim B$ if $A\leq C B$ for some $C>0$, and $A \ll B$ if $A\leq c B$ for some small $c>0$. If $A \lesssim B$ and $B \lesssim A$, we write $A \sim B$.  If $C$ depends on some parameter $p$, we write $C=C(p)$ or simply $A\lesssim_p B$. Given $x \in \R$, we write $x+$ and $x-$ for quantities of the form $x+\epsilon$ and $x-\epsilon$,
respectively, where $0<\epsilon\ll 1$. Moreover, we denote the Japanese bracket of $x$ by
\[
\langle x \rangle := (1+|x|^2)^{\frac{1}{2}}.
\]

We define the Fourier and inverse Fourier transforms on $\R^d$ by
\[
\F f(\xi) = \widehat{f}(\xi) := \int_{\R^d} e^{-i \, x\cdot \xi} f(x) \; dx, \qquad  \F^{-1}f(x) = \widecheck f(x) = \frac{1}{(2\pi)^d} \int_{\R^d} e^{i \, x\cdot \xi} f(x) \; dx.
\]
For $f = f(t,x) \in L^1(\R \times \R^d)$, we write $\widetilde{f}$ for the space-time Fourier transform; that is,
\[
\widetilde{f}(\tau,\xi) := \int_{\R} \int_{\R^d} e^{- i x \cdot \xi} e^{- it \tau} f(t,x) \; dx \, dt.
\]

We denote by $\dot{W}^{s,r}(\R^d)$  and $W^{s,r}(\R^d)$  the homogeneous and inhomogeneous Sobolev spaces, with norms
\[
\| f\|_{\dot{W}^{s,r}(\R^d)} := \| |\nabla|^s f \|_{L^r(\R^d)}, \qquad \text{ and } \qquad  \| f\|_{W^{s,r}(\R^d)} := \| \langle\nabla \rangle ^s f \|_{L^r(\R^d)}.
\]
For $r=2$, we simply write $\dot{H}^s(\R^d) = \dot{W}^{s,2}(\R^d)$ and  $H^s(\R^d) = W^{s,2}(\R^d)$. Additionally, we define
\[
H^\infty(\R^d) := \bigcap_{s>0} H^s(\R^d).
\]

We define the norm
\[
\| f \|_{L_t^q L_x^r} := \big \| \|f(t,x) \|_{L_x^r(\R^d)} \big \|_{L_t^q(\R)} = \pt{ \int_\R \pt{\int_{\R^d} |f(t,x)|^r  d  x}^{\frac{q}{r}} d t }^{\frac{1}{q}},
\]
with the usual modifications when $q=\infty$ or $r=\infty$. If $q=r$, we write $L_{t,x}^q := L_t^q L_x^q$ for simplicity.

If $u$ solves the initial value problem (IVP)
\begin{equation} \label{solution_dispersive_PDE_notation}
   i u_t(t,x) - (-\Delta)^{\alpha}u(t,x) = 0, \qquad u(0,x) = u_0(x),
\end{equation}
 where $\widehat{(-\Delta)^{\alpha} u}(\xi) := |\xi|^{2\alpha} \widehat{u}(\xi)$, then we denote the solution of (\ref{solution_dispersive_PDE_notation}) by
\begin{equation} \label{Salpha_definition}
  S_\alpha(t) u_0 := \F^{-1} ( e^{-it|\xi|^{2\alpha}} \widehat{u}_0(\xi)).  
\end{equation}

For $\alpha>0$  and $s,b \in \R$, we define the function spaces $X_\alpha^{s,b}(\R \times \R^d)$
 as the closure of the Schwartz functions $\mathcal{S}_{t,x}(\R \times \R^d)$ under the norm 
\begin{equation} \label{Xsb_definition}
\| u\|_{X_{\alpha}^{s,b}} := \| \langle \xi \rangle^s \langle \; \tau + |\xi|^{2\alpha}\rangle^b \; \widetilde{u}(\tau,\xi) \|_{L_\tau^2L_\xi^2}.    
\end{equation}
For simplicity, we write $X^{s,b}$ instead of $X_\alpha^{s,b}$.

A pair $(q,r)$ is called \textit{Schr\"{o}dinger admissible} if $(q,r) \in [2,+\infty]^2$, $(q,r,d) \neq (2,+\infty,2)$ and
\begin{equation} \label{Schrodinger_admissible}
     \frac{2}{q}+ \frac{d}{r} \leq \frac{d}{2}.
\end{equation}
If, in addition, equality holds in (\ref{Schrodinger_admissible}), we say that $(q,r)$ is \textit{sharp Schr\"{o}dinger admissible}.

Next, we define some multilinear expressions, which have been used in \cite{CKSTT_global_not_refined, CKSTT_global_refined}. For $k\geq 2$ an even integer, we define $M_k = M_k(\xi_1, \ldots, \xi_k)$ to be any function on the hyperplane
\[
\Gamma_k := \{(\xi_1, \ldots , \xi_k) \in (\R^d)^k : \xi_1 + \ldots + \xi_k = 0\},
\]
which we endow with the measure $d\Gamma_k(\xi)$ defined by
\[
\int_{\Gamma_k} F(\xi_1,\ldots,\xi_k)\,d\Gamma_k(\xi) := \int_{(\R^d)^{k-1}} F\left(\xi_1, \ldots, \xi_{k-1}, -\xi_1 - \ldots - \xi_{k-1}\right) \,d\xi_1 \ldots d\xi_{k-1}.
\]

We then define the $k$-linear functional $\lambda_k(M_k; f_1, \ldots, f_k)$ by 
\[
\lambda_k(M_k; f_1, \ldots, f_k) := \int_{\Gamma_k} M_k(\xi_1, \ldots, \xi_k) \; \prod_{j=1}^k \widehat{f}_j(\xi_j) \; d\Gamma_k(\xi).
\]
Moreover, as in \cite{CKSTT_global_not_refined}, we denote
\begin{equation} \label{lambda_M_f}
\lambda_k(M_k;f) := \lambda_k(M_k; f, \overline{f},\ldots, f, \overline{f}). 
\end{equation}

Given indices $i,j,k$ and $p$, we will often write 
\begin{equation} \label{xi_ijpk}
\xi_{ij} := \xi_i + \xi_j, \qquad \xi_{ijk} := \xi_i + \xi_j + \xi_k, \qquad \text{ and } \qquad \xi_{ijkp} := \xi_i + \xi_j + \xi_k + \xi_p.    
\end{equation}

Given a function $u \in L^2(\R^d)$, we define the (essential) \textit{support} of $u$ by
\[
\operatorname{supp} u := \R^d\setminus \bigcup \left\{U\subset \R^d\text{ open}: u=0 \text{ a.e. on }U
\right\}.
\]

Let $\phi_N \in C_c^\infty(\R^d)$, with $N \in 2^\N$, such that $\phi_1$ is supported in the region $\langle \xi \rangle \lesssim 1$,  $\phi_N$ is supported in the annulus $\langle \xi \rangle \sim N$, and
\[
\sum_{N \in 2^\N} \phi_N = 1.
\]
Then, for every $N \in 2^\N$, we define the Littlewood-Paley projection of $u$ by
\[
\widehat{u_N}(\xi) = \phi_N(\xi) \widehat{u}(\xi).
\]
With this notation, we can write
\[
u \sim \sum_{N \in 2^\N} u_N.
\]

Finally, let us recall the following useful fact:

\begin{proposition} \label{Double_MVT_Proposition}
Let $f \in C^2(\R^d)$ and $\xi, \eta, \lambda \in \R^d$ with $|\eta|,|\lambda| \ll |\xi|$. Then one has 
 \begin{equation*} 
|f(\xi+ \eta + \lambda) - f(\xi + \eta) - f(\xi +\lambda) + f(\xi)|  \lesssim |\eta|\, |\lambda| \, \| \nabla^2f(x)\|,
\end{equation*}   
where $|x| \sim |\xi|$, and $\| \cdot \|$ denotes a matrix norm on $d\times d$ matrices.
\end{proposition}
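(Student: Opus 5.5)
The approach is to apply the fundamental theorem of calculus twice, once along each of the two small displacements. Write the left-hand side as a second difference:
\[
f(\xi+\eta+\lambda) - f(\xi+\eta) - f(\xi+\lambda) + f(\xi) = g(1) - g(0), \qquad g(t) := f(\xi + t\eta + \lambda) - f(\xi + t\eta).
\]
Since $f \in C^2$, the function $g$ is $C^1$ on $[0,1]$ with
\[
g'(t) = \nabla f(\xi+t\eta+\lambda)\cdot \eta - \nabla f(\xi+t\eta)\cdot\eta .
\]
Applying the fundamental theorem of calculus again, now in the $\lambda$ direction, gives
\[
g'(t) = \int_0^1 \eta^{T}\, \nabla^2 f(\xi + t\eta + r\lambda)\, \lambda \; dr .
\]
Hence the whole expression equals
\[
\int_0^1\!\!\int_0^1 \eta^T \nabla^2 f(\xi+t\eta+r\lambda)\,\lambda \; dr\, dt .
\]

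Bounding the integrand by $|\eta|\,|\lambda|\,\|\nabla^2 f(\xi+t\eta+r\lambda)\|$ (all matrix norms on $d\times d$ matrices are equivalent, so the choice of norm only affects the implicit constant) gives
\[
\bigl|\cdots\bigr| \le |\eta|\,|\lambda| \sup_{t,r\in[0,1]} \|\nabla^2 f(\xi+t\eta+r\lambda)\| .
\]
The supremum is attained at some point $x = \xi + t_0\eta + r_0\lambda$, by continuity of $\nabla^2 f$ on the compact parallelogram. Because $|\eta|,|\lambda|\ll|\xi|$, say each is at most $|\xi|/4$, the triangle inequality gives $\tfrac12|\xi| \le |x| \le \tfrac32|\xi|$, so $|x|\sim|\xi|$. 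This is exactly the statement, with $x$ understood as the point where the Hessian is maximized over the parallelogram.

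The only subtle point is interpretive rather than technical. A genuine single-point mean value equality fails for vector-valued or multivariate second differences, so the claim should be read with $x$ chosen as this maximizing point, or equivalently with $\|\nabla^2 f(x)\|$ replaced by $\sup_{|x|\sim|\xi|}\|\nabla^2 f(x)\|$. This is also how the bound will be used with $f(\xi)=|\xi|^{2\alpha}$, where $\|\nabla^2 f(x)\|\sim|x|^{2\alpha-2}$ is comparable across the annulus $|x|\sim|\xi|$.
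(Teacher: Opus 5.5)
Your argument is correct and is essentially the paper's own: the paper applies the one-dimensional mean value theorem twice, which gives the representation $\int_0^1\int_0^1 \eta^{\mathsf T}\nabla^2 f(\xi+s\lambda+t\eta)\lambda\,ds\,dt$ (stated later in its appendix), and then bounds the Hessian at a point $x$ of the parallelogram, where $|x|\sim|\xi|$ because $|\eta|,|\lambda|\ll|\xi|$. One correction to your closing remark: for real-valued $f$ (as in the application, e.g.\ $f=\theta^2$), applying the scalar mean value theorem twice gives an exact identity $\eta^{\mathsf T}\nabla^2 f(\xi+t_0\eta+r_0\lambda)\lambda$ at a single point, so your caveat about a single-point identity failing is only needed for complex- or vector-valued $f$, and your supremum argument reaches the same bound.
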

The proof of Proposition \ref{Double_MVT_Proposition} follows by applying the standard mean value theorem twice.

\section{Estimates in \texorpdfstring{$X^{s,b}$}{} spaces} \label{Estimates_Xsb} 

\subsection{Strichartz estimates} 

 In \cite{ChoOzawaXia}, Strichartz estimates were established for the general Schr\"{o}dinger equation 
 \[
 i u_t + \omega(|\nabla|) u = 0
 \]
 on $\R^d$, under some regularity assumptions on $\omega$. For $\omega(\rho)= -\rho^{2\alpha}$, in particular, the estimates obtained in \cite{ChoOzawaXia} imply the following Strichartz estimate for the fractional Laplacian. 
    \begin{lemma} \label{Lemma_fractional_Strichartz_estimate}
    Let $\alpha \in (\frac{1}{2},1]$ and let $(q,r)$ be a sharp Schr\"{o}dinger admissible pair. Then it holds that
    \begin{equation} \label{fractional_Strichartz_estimate}
     \| S_\alpha(t) u  \|_{L_t^qL_x^r} \lesssim \|u\|_{\dot{H}^s(\R^d)}, \qquad \text{ where } \quad   s := \frac{d}{2} - \pt{ \frac{2\alpha}{q} + \frac{d}{r}}.
    \end{equation}
    
    \end{lemma}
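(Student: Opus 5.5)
The plan is to deduce the estimate from a frequency-localized dispersive bound for $S_\alpha(t)$, in the Keel--Tao framework, together with Littlewood--Paley decomposition and scaling. The starting point is the phase $\omega(\rho)=-\rho^{2\alpha}$. For $\alpha\in(\tfrac12,1]$ it satisfies $\omega'(\rho)\sim\rho^{2\alpha-1}$ and $\omega''(\rho)\sim\rho^{2\alpha-2}$, both nonvanishing for $\rho>0$, so the Hessian of $|\xi|^{2\alpha}$ is nondegenerate on every annulus. This nondegeneracy is exactly the hypothesis of \cite{ChoOzawaXia}; the argument below is the route I would take to verify it directly.

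First consider $P_1$, the projection onto the annulus $|\xi|\sim1$. The kernel of $S_\alpha(t)P_1$ is
\[
K_t(x)=\int e^{i(x\cdot\xi-t|\xi|^{2\alpha})}\phi(\xi)\,d\xi .
\]
Stationary phase with the nondegenerate Hessian gives $|K_t(x)|\lesssim(1+|t|)^{-d/2}$, while the trivial bound gives $\|K_t\|_{L^\infty}\lesssim1$. Hence
\[
\|S_\alpha(t)P_1f\|_{L^\infty}\lesssim|t|^{-d/2}\|f\|_{L^1}.
\]
The operator is also unitary on $L^2$. The Keel--Tao theorem, with decay exponent $\sigma=d/2$, then yields $\|S_\alpha(t)P_1f\|_{L^q_tL^r_x}\lesssim\|f\|_{L^2}$ for every sharp admissible pair $(q,r)$, including the endpoint when $d\ge3$.

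Next, pass to the dyadic frequency $N$ by scaling. If $\widehat f$ is supported where $|\xi|\sim N$, set $f_N(x)=f(x/N)$, which has frequency $\sim1$. Since $S_\alpha(t)f(x)=[S_\alpha(N^{2\alpha}t)f_N](Nx)$, a change of variables gives
\[
\|S_\alpha(t)P_Nf\|_{L^q_tL^r_x}\lesssim N^{\frac d2-\frac{2\alpha}q-\frac dr}\|P_Nf\|_{L^2}=N^{s}\|P_Nf\|_{L^2},
\]
with $s$ exactly as in (\ref{fractional_Strichartz_estimate}).

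Finally, sum over $N$. For $r<\infty$, the Littlewood--Paley square function estimate and Minkowski's inequality (using $q,r\ge2$) give
\[
\|S_\alpha u\|_{L^q_tL^r_x}\lesssim\Big\|\Big(\sum_N|S_\alpha P_Nu|^2\Big)^{1/2}\Big\|_{L^q_tL^r_x}\le\Big(\sum_N\|S_\alpha P_Nu\|_{L^q_tL^r_x}^2\Big)^{1/2}\lesssim\|u\|_{\dot H^s}.
\]
For $r=\infty$ the square function estimate is unavailable. In that case $d=3$ with $q=\tfrac43<2$, which is not admissible, or $d=2$ with $(q,r)=(2,\infty)$, which is excluded. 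So $r=\infty$ only occurs with $q=\infty$, where Sobolev embedding and unitarity already give the bound.

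I expect the main obstacle to be the stationary phase decay. One needs uniform control of the Hessian eigenvalues of $|\xi|^{2\alpha}$ on the annulus: the radial eigenvalue is $2\alpha(2\alpha-1)$, which is where $\alpha>\tfrac12$ is used, and the angular eigenvalues equal $2\alpha$. The other ingredient is the standard nonstationary phase bound in the region $|x|\gg|t|$ or $|x|\ll|t|$.
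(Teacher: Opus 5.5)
Your route (a frequency-localised $L^1\to L^\infty$ decay bound from stationary phase, Keel--Tao, the scaling identity $S_\alpha(t)f(x)=[S_\alpha(N^{2\alpha}t)f_N](Nx)$, and Littlewood--Paley summation via the square function and Minkowski) is the standard argument behind the result. The paper gives no argument of its own and simply cites \cite{ChoOzawaXia}. For $d\geq 2$ your proof is complete, provided you use homogeneous dyadic pieces $N\in 2^{\mathbb{Z}}$. Then every piece lives on a genuine annulus where the Hessian of $|\xi|^{2\alpha}$ is nondegenerate, and the sum reproduces $\|u\|_{\dot{H}^s}$. Indeed, for $d\geq 2$ no sharp admissible pair has $r=\infty$.

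The gap is in your $r=\infty$ paragraph. The pair $(\infty,\infty)$ is not sharp admissible in any dimension, and in any case $\dot{H}^{d/2}\not\hookrightarrow L^\infty$. For $d=1$, however, the pair $(q,r)=(4,\infty)$ is sharp admissible, with $s=\frac{1-\alpha}{2}$. The lemma places no restriction on $d$, and Corollary \ref{Strichartz_Xsb_embedding} invokes it for $d\geq 1$.

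In this case your method breaks at the summation step. Keel--Tao and scaling still give $\|S_\alpha(t)P_N u\|_{L^4_tL^\infty_x}\lesssim N^{s}\|P_N u\|_{L^2}$. But with no square-function estimate in $L^\infty$ you can only sum by the triangle inequality. That puts the strictly larger norm $\sum_N N^{s}\|P_N u\|_{L^2}=\|u\|_{\dot{B}^{s}_{2,1}}$ on the right-hand side instead of $\|u\|_{\dot{H}^s}$.

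Closing this case needs a non-localised argument:
\begin{itemize}
\item van der Corput's lemma with the weight $|\phi''|^{1/2}$, $\phi(\xi)=|\xi|^{2\alpha}$, gives $\bigl|\int_{a}^{b} e^{i(x\xi-t|\xi|^{2\alpha})}|\xi|^{\alpha-1}\,d\xi\bigr|\lesssim |t|^{-1/2}$ uniformly in $x$ and in $a<b$. Equivalently, $\||\nabla|^{\alpha-1}S_\alpha(t)f\|_{L^\infty}\lesssim |t|^{-1/2}\|f\|_{L^1}$.
\item A $TT^*$ argument for $T=|\nabla|^{-\frac{1-\alpha}{2}}S_\alpha(t)$, combined with the Hardy--Littlewood--Sobolev inequality in time (convolution with $|t|^{-1/2}$ maps $L^{4/3}_t\to L^4_t$), then yields $\|S_\alpha(t)u\|_{L^4_tL^\infty_x}\lesssim\|u\|_{\dot{H}^{\frac{1-\alpha}{2}}}$.
\end{itemize}
For the dimensions $d\in\{2,3\}$ that the paper actually uses, this case never arises.
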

    Lemma \ref{Lemma_fractional_Strichartz_estimate} was also shown later in \cite{DinhPhD, Dinh_Applied_Mathematics}. Additionally, (\ref{fractional_Strichartz_estimate}) was used in \cite{HongSire} to obtain local well-posedness and ill-posedness for the fractional Schr\"{o}dinger equation in Sobolev spaces.
    
    By the transference principle (see  \cite[Lemma 2.9]{TaoDispersive}), (\ref{fractional_Strichartz_estimate}) implies the following bound in terms of the $X^{s,b}$-norm. 

\begin{corollary}\label{Strichartz_Xsb}
    Let $\alpha \in (\frac{1}{2},1]$, let $b>\frac{1}{2}$, and let $(q,r)$ be a sharp Schr\"{o}dinger admissible pair. Then it holds that
    \[
    \|u \|_{L_t^qL_x^r} \lesssim \|u\|_{X^{s,b}}, \qquad  \text{ where } \quad s := \frac{d}{2} - \pt{ \frac{2\alpha}{q} + \frac{d}{r}}.
    \]
\end{corollary}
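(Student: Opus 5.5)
The plan is the standard transference argument. We reduce to the Strichartz estimate (\ref{fractional_Strichartz_estimate}) of Lemma \ref{Lemma_fractional_Strichartz_estimate} by writing $u$ as a superposition of modulated free solutions.

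For $u$ Schwartz, invert the space-time Fourier transform and change variables $\tau = \lambda - |\xi|^{2\alpha}$. This gives
\[
u(t,x) = \frac{1}{2\pi}\int_\R e^{it\lambda}\, S_\alpha(t) f_\lambda(x)\, d\lambda, \qquad \widehat{f_\lambda}(\xi) := \widetilde{u}(\lambda - |\xi|^{2\alpha},\xi),
\]
with the normalisation constants of the inverse transforms absorbed into the implicit constant. The first key step is to apply Minkowski's integral inequality in the $L_t^qL_x^r$ norm. Since $|e^{it\lambda}|=1$, the modulation does not affect the norm, so we get
\[
\|u\|_{L_t^qL_x^r} \lesssim \int_\R \|S_\alpha(t) f_\lambda\|_{L_t^qL_x^r}\, d\lambda \lesssim \int_\R \|f_\lambda\|_{\dot H^s}\, d\lambda,
\]
where the second inequality is Lemma \ref{Lemma_fractional_Strichartz_estimate} applied for each fixed $\lambda$.

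The second step is Cauchy--Schwarz in $\lambda$ with the weight $\langle\lambda\rangle^{b}$:
\[
\int_\R \|f_\lambda\|_{\dot H^s}\, d\lambda \le \Big(\int_\R \langle\lambda\rangle^{-2b}d\lambda\Big)^{1/2}\Big(\int_\R \langle\lambda\rangle^{2b}\|f_\lambda\|_{\dot H^s}^2\, d\lambda\Big)^{1/2}.
\]
The first factor is finite precisely because $b>\frac12$. Next, $s\ge 0$, since $\frac{2\alpha}{q}\le\frac{2}{q}$ for $\alpha\le 1$ and sharp admissibility gives $\frac{2}{q}+\frac{d}{r}=\frac d2$. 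Hence $|\xi|^{s}\le\langle\xi\rangle^s$, and by Plancherel the second factor is bounded by
\[
\Big(\int\!\!\int \langle\lambda\rangle^{2b}\langle\xi\rangle^{2s}|\widetilde u(\lambda-|\xi|^{2\alpha},\xi)|^2\,d\xi\,d\lambda\Big)^{1/2}.
\]
Undoing the change of variables, $\lambda = \tau+|\xi|^{2\alpha}$, turns this into $\|u\|_{X^{s,b}}$ as defined in (\ref{Xsb_definition}).

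The final step is a density argument extending the bound from Schwartz functions to the whole space $X^{s,b}$, which is defined as their closure. No step is a real obstacle. The only points needing care are the sign conventions (the modulation $\tau+|\xi|^{2\alpha}$ matches $S_\alpha(t)=\F^{-1}e^{-it|\xi|^{2\alpha}}$) and the justification of Minkowski's inequality, which is valid since $q,r\ge 2\ge 1$. The case $q=\infty$ or $r=\infty$ is handled identically with the usual modifications.
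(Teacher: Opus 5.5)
Your proposal is correct and is exactly the transference-principle argument the paper invokes by citing \cite[Lemma 2.9]{TaoDispersive}. You write out that lemma's proof directly: superpose modulated free solutions, apply Minkowski and the Strichartz estimate for each fixed $\lambda$, then use Cauchy--Schwarz in $\lambda$ with $b>\frac12$. Your check that $s=\frac{2(1-\alpha)}{q}\geq 0$, which lets you pass from the $\dot H^s$ norm in Lemma \ref{Lemma_fractional_Strichartz_estimate} to the inhomogeneous weight in the $X^{s,b}$ norm, is a detail the paper leaves implicit.
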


 Further, by a standard Sobolev embedding, we can deduce the following.

\begin{corollary} \label{Strichartz_Xsb_embedding}
    Let $\alpha \in (\frac{1}{2},1]$, $d\geq 1$,  $b>\frac{1}{2}$,  and let $(q,r)$ be a Schr\"{o}dinger admissible pair. Then, for
    \[
     \begin{cases} s   
        \geq \frac{d}{2} - \pt{ \frac{2\alpha}{q} + \frac{d}{r}}, & 2\leq r<+\infty, \\
        s > \frac{d}{2} -  \frac{2\alpha}{q}, & r= + \infty,
    \end{cases} 
    \]
    we have
    \[
    \| u \|_{L_t^q L_x^r} \lesssim \|u\|_{X^{s,b}}.
    \]
\end{corollary}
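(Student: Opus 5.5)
We will derive Corollary \ref{Strichartz_Xsb_embedding} from Corollary \ref{Strichartz_Xsb} in two steps: first move from a general admissible pair $(q,r)$ to a sharp one with the same $q$, then absorb the loss in spatial integrability with a Sobolev embedding in $x$, done pointwise in $t$.

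\textbf{Case $2\le r<\infty$.} Fix $q$ and choose $\tilde r$ with $\frac{2}{q}+\frac{d}{\tilde r}=\frac{d}{2}$. Since $\frac{d}{r}\le \frac{d}{2}-\frac{2}{q}=\frac{d}{\tilde r}$, we have $2\le\tilde r\le r$, and $(q,\tilde r)$ is sharp admissible. We must exclude the endpoint $(q,\tilde r,d)=(2,\infty,2)$. This cannot occur, since $\tilde r\le r<\infty$. For $d=1$ and $q<4$ there is no sharp pair with this $q$ and $\tilde r\ge 2$ finite; this low-dimensional edge case can be handled by complex interpolation with the $L^\infty_tL^2_x$ bound, and it is harmless since we only use $d\in\{2,3\}$.

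Now apply the Sobolev embedding $\dot W^{\sigma,\tilde r}\hookrightarrow L^r$ for each fixed $t$, with $\sigma=\frac{d}{\tilde r}-\frac{d}{r}\ge0$, which gives
\[
\|u\|_{L^q_tL^r_x}\lesssim \||\nabla|^\sigma u\|_{L^q_tL^{\tilde r}_x}.
\]
When $\sigma=0$ this is trivial. For $\sigma>0$ it is Hardy--Littlewood--Sobolev, which needs $1<\tilde r<r<\infty$, and that holds. Applying Corollary \ref{Strichartz_Xsb} to $|\nabla|^\sigma u$ gives a bound by $\|u\|_{X^{s_0+\sigma,b}}$, where $s_0=\frac d2-\frac{2\alpha}{q}-\frac{d}{\tilde r}$.

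Strictly, Corollary \ref{Strichartz_Xsb} is stated with the inhomogeneous weight, but the transference argument works equally well with a homogeneous weight $|\xi|^{s}$. The exponents add up as needed:
\[
s_0+\sigma=\frac d2-\frac{2\alpha}{q}-\frac dr.
\]
Since this quantity is nonnegative (because $\alpha\le1$, $\frac{2\alpha}{q}+\frac dr\le\frac2q+\frac dr\le\frac d2$), we have $|\xi|^{s_0+\sigma}\lesssim\langle\xi\rangle^{s}$ for any $s\ge s_0+\sigma$. That gives the claim.

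\textbf{Case $r=\infty$.} Here we use instead $W^{\epsilon+d/\tilde r,\tilde r}\hookrightarrow L^\infty$ with some large finite $\tilde r$, which needs a strict inequality. Pick $\tilde r<\infty$ with $(q,\tilde r)$ sharp admissible. This is possible when $q>2$, or $q=2$ and $d\ge3$. The pair is then sharp, but for $d\geq 3$ with $q=2$ the pair is $(2,\frac{2d}{d-2})$, which is allowed. Then
\[
\|u\|_{L^q_tL^\infty_x}\lesssim\|\langle\nabla\rangle^{d/\tilde r+\epsilon}u\|_{L^q_tL^{\tilde r}_x}\lesssim\|u\|_{X^{s,b}}
\]
with
\[
s=\frac d2-\frac{2\alpha}{q}-\frac d{\tilde r}+\frac d{\tilde r}+\epsilon,
\]
which matches any $s>\frac d2-\frac{2\alpha}q$.

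\textbf{Expected obstacle.} The main difficulty is bookkeeping at the endpoints. The admissibility exclusion $(2,\infty,2)$ is harmless because we never use $\tilde r=\infty$. The only place that needs care is making sure every exponent lies in the range where Sobolev/HLS applies and that the homogeneous versus inhomogeneous weights match. Both are routine once the sharp pair $(q,\tilde r)$ is chosen as above.
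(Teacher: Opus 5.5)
Your proposal is correct and follows the paper's argument: pick the sharp pair $(q,\tilde r)$ with the same $q$ and $\tilde r\le r$, apply a spatial Sobolev embedding at each fixed $t$, and invoke Corollary \ref{Strichartz_Xsb}. The only difference is cosmetic: the paper uses the inhomogeneous embedding $W^{\sigma,\tilde r}\hookrightarrow L^r$, which makes your homogeneous-weight remark unnecessary (in any case $\langle\xi\rangle^{s_0}|\xi|^{\sigma}\le\langle\xi\rangle^{s_0+\sigma}$), and your $d=1$, $q<4$ edge case is vacuous because admissibility in $d=1$ already forces $q\ge 4$, while at $d=1$, $q=4$, $r=\infty$ (where no finite $\tilde r$ exists) the pair $(4,\infty)$ is itself sharp admissible, so Corollary \ref{Strichartz_Xsb} applies directly.
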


\begin{proof}
 Assume for simplicity that $r < \infty $; the case $r=+\infty$ can be proved similarly. First, for $2\leq r_2  \leq r < + \infty$, by the Sobolev embedding theorem we have
    \[
    \| u\|_{L^{r}} \lesssim \|u\|_{W_x^{\sigma,r_2}}, \qquad \text{ for } \quad \sigma \geq d \pt{ \frac{1}{r_2} - \frac{1}{r} }.
    \]
    Then, for fixed $r,r_2$ and $q$ such that $2\leq r_2\leq r < + \infty$ and $\frac{2}{q}+ \frac{d}{r_2} = \frac{d}{2}$, it follows that
    \[
    \|u \|_{L_t^{q}L_x^{r}} \lesssim \| u\|_{L_t^q W_x^{\sigma, r_2}} =  \|  \langle \nabla \rangle^\sigma u\|_{L_t^q L_x^{ r_2}}\lesssim \|u\|_{X^{s+\sigma,b}}, \qquad \text{ for } \qquad  b> \frac{1}{2}, \enspace \text{ and } \enspace  s = \frac{d}{2} - \pt{ \frac{2\alpha}{q} + \frac{d}{r_2}},
    \]
    where the last inequality follows from Corollary \ref{Strichartz_Xsb} since $(q,r_2)$ is sharp Schr\"{o}dinger admissible. In particular, for $\sigma = d \pt{ \frac{1}{r_2} - \frac{1}{r} }$ we have
    \[
    s + \sigma =  \frac{d}{2} - \pt{ \frac{2\alpha}{q} + \frac{d}{r}},
    \]
    as desired. This completes the proof.
\end{proof}

\subsection{Linear and bilinear estimates in \texorpdfstring{$X^{s,b}$}{}}

By definition, we have
\begin{equation}\label{14.d.alpha}
\|u\|_{L^2_{t,x}} = \|u\|_{X^{0,0}}. 
\end{equation}

By Sobolev embedding in time, we have $X^{s,b} \hookrightarrow C_t H_x^s$ for $b> \frac{1}{2}$ and $s \in \R$, thus
\begin{equation}\label{20.d.alpha}
\|u\|_{L^\infty_t L^2_x} \lesssim \|u\|_{X^{0,\frac12+}}.
\end{equation}
In addition, since $H_x^s \hookrightarrow L_x^\infty$ for $s>\frac{d}{2}$, we also have
\begin{equation}\label{15.d.alpha}
\|u\|_{L^\infty_{t,x}} \lesssim \|u\|_{X^{\frac{d}{2}+,\frac{1}{2}+}}. 
\end{equation}

For $s_d(\alpha,p) := (\frac{d}{2}-\frac{2\alpha + d}{p})$, Corollary \ref{Strichartz_Xsb_embedding} gives 
\begin{equation}
    \|u\|_{L_{t,x}^{p}} \lesssim \|u\|_{X^{s_d(\alpha,p), \frac{1}{2}+}}
\end{equation}
for any $(p,p)$ Schr\"{o}dinger admissible with $p<+\infty$, in particular for  $p=p_d : = \frac{2(2+d)}{d}$.  Note that $s_d(\alpha, p_d) \geq 0$ for any $\alpha \leq 1$, and $p_d > 2$ for any $d \geq 1$. Thus, by interpolating with (\ref{14.d.alpha}) and (\ref{15.d.alpha}), we obtain
\[
    \|u\|_{L_{t,x}^{p_d-}} \lesssim \|u\|_{X^{s_d(\alpha, p_d), \frac{1}{2}-}}, \qquad  \|u\|_{L_{t,x}^{p_d+}} \lesssim \|u\|_{X^{s_d(\alpha, p_d)+, \frac{1}{2}+}}.
    \]
In particular, for $d=3$ we have
\begin{equation} \label{II_term_Y_bounds_plus_minus}
    \|u\|_{L_{t,x}^{\frac{10}{3}-}} \lesssim \| u\|_{X^{\frac{3}{5}(1-\alpha),\frac{1}{2}-}}, \qquad \|u\|_{L_{t,x}^{\frac{10}{3}+}} \lesssim \| u\|_{X^{\frac{3}{5}(1-\alpha)+,\frac{1}{2}+}}.    
    \end{equation}
    
Let $c_1,c_2 \in \R$ with $c_1<c_2$, and let $\chi = \chi(t)$ be the characteristic function of the interval $[c_1,c_2]$. Then, for any $s \in \R$ and $b< \frac{1}{2}$, 
\begin{equation} \label{chi_Xsb}
\|\chi f\|_{X^{s,b}}
\lesssim
\|f\|_{X^{s,b+}}.
\end{equation}
For the proof of (\ref{chi_Xsb}) in the periodic case, see \cite[Lemma 2.1]{Sohinger_S1}. The proof for the nonperiodic case is analogous.

Next, we prove a bilinear $X^{s,b}$-estimate, the proof of which is a slightly modified version of \cite[Theorem A.3.3]{DinhPhD}.

\begin{proposition}\label{Theorem_A.3.3_Dinh_modified}
Let $d\geq 2$, $\alpha \in (\frac{1}{2},1]$,  and $N_2, N_1\in 2^\N$ with $N_2 \leq N_1$.  Let $u,v \in L^2$ and suppose that
     \[
\operatorname{supp}\widehat{u}\subset\{\langle \xi \rangle \sim N_1\},\qquad
\operatorname{supp}\widehat{v}\subset\{\langle \xi \rangle \sim N_2\}.
\] 
Then it holds that
\begin{equation} \label{Fractional_Bilinear_estimate}
\| (S_\alpha(t) u ) (S_\alpha(t)v )  \|_{L_t^2 L_x^2} \lesssim  \frac{N_2^{\frac{d}{2}-\frac{1}{2}}}{N_1^{\alpha - \frac{1}{2}}} \; \Vert v \Vert_{L^2} \; \Vert u \Vert_{L^2},     
\end{equation}
where $S_\alpha$ is defined in (\ref{Salpha_definition}).
\end{proposition}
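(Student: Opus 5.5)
We prove \eqref{Fractional_Bilinear_estimate} with the standard Bourgain bilinear refinement argument: take the space-time Fourier transform, use Plancherel, and reduce to a measure estimate on a codimension-one surface.

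\textbf{Reduction to a weighted integral.} Write $\widetilde{F}$ for the space-time Fourier transform of $F=(S_\alpha(t)u)(S_\alpha(t)v)$. Up to constants,
\[
\widetilde{F}(\tau,\xi)=\int_{\R^d}\delta\big(\tau+|\xi-\eta|^{2\alpha}+|\eta|^{2\alpha}\big)\,\widehat u(\xi-\eta)\,\widehat v(\eta)\,d\eta .
\]
By Cauchy--Schwarz with respect to the measure $\delta(\cdots)\,d\eta$, for fixed $(\tau,\xi)$ we get
\[
|\widetilde{F}(\tau,\xi)|^2\le m(\tau,\xi)\int\delta(\cdots)\,|\widehat u(\xi-\eta)|^2|\widehat v(\eta)|^2\,d\eta,
\]
where $m(\tau,\xi)$ is the $\delta$-measure of the set
\[
\{\eta:\ \langle\eta\rangle\sim N_2,\ \langle\xi-\eta\rangle\sim N_1,\ \tau=-|\xi-\eta|^{2\alpha}-|\eta|^{2\alpha}\}.
\]
Integrating in $(\tau,\xi)$ and using Plancherel gives
\[
\|F\|_{L^2_{t,x}}^2\lesssim \sup_{\tau,\xi} m(\tau,\xi)\,\|u\|_{L^2}^2\|v\|_{L^2}^2 .
\]
It therefore suffices to show
\[
\sup m\lesssim N_2^{d-1}N_1^{1-2\alpha}.
\]

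\textbf{Case where $\langle\xi-\eta\rangle$ dominates.} When $N_1\gg N_2$, or more generally when $|\xi-\eta|\gg|\eta|$ on the support, we estimate $m$ by the coarea formula. Let
\[
\phi(\eta)=|\xi-\eta|^{2\alpha}+|\eta|^{2\alpha}.
\]
Then $m=\int_{\{\phi=-\tau\}}|\nabla\phi|^{-1}\,d\sigma$, and
\[
\nabla\phi(\eta)=2\alpha\big(|\eta|^{2\alpha-2}\eta-|\xi-\eta|^{2\alpha-2}(\xi-\eta)\big).
\]
Since $|\xi-\eta|\sim N_1\gg|\eta|$ and $2\alpha-1>0$, the second term dominates, so $|\nabla\phi|\gtrsim N_1^{2\alpha-1}$. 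The level set intersected with the ball $\{|\eta|\lesssim N_2\}$ is a smooth hypersurface; since $\phi$ has nondegenerate gradient there, it is a graph over a hyperplane. Its surface measure is therefore $\lesssim N_2^{d-1}$, which gives $m\lesssim N_2^{d-1}N_1^{1-2\alpha}$.

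\textbf{Comparable-frequency case: the main obstacle.} When $N_1\sim N_2$ the gradient can vanish near $\eta=\xi/2$, where $|\eta|=|\xi-\eta|$. For $\alpha\neq1$ we also lose the exact paraboloid structure, so the surface need not be a graph globally. The claimed bound then reads $N_2^{d/2-1/2}N_1^{1/2-\alpha}\sim N^{d/2-\alpha}$, and I would prove it directly rather than through the transversality argument. By Sobolev/Strichartz (Lemma \ref{Lemma_fractional_Strichartz_estimate}, e.g.\ with the pair giving $L^4_tL^4_x$ after Hölder), or by the trivial bound $m\lesssim N^{d}\cdot N^{-?}$ obtained by dyadically decomposing $|\nabla\phi|$, I would aim for
\[
\|F\|_{L^2}\le\|S_\alpha u\|_{L^4}\|S_\alpha v\|_{L^4}\lesssim N^{2s}\|u\|\|v\|,\qquad s=\frac d2-\frac\alpha2-\frac d4 .
\]
This gives exactly $2s=\frac d2-\alpha$, provided $(4,4)$ is admissible, i.e.\ $d\ge2$, which holds since $d\geq 2$. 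The remaining work is to glue the two regimes by a threshold $N_1\ge CN_2$ versus $N_1<CN_2$. In the first regime one checks the gradient lower bound uniformly. In the second, Hölder plus Corollary \ref{Strichartz_Xsb}-type Strichartz suffices, since $N_1\sim N_2$ makes the bound symmetric. The delicate point, which I expect to be the main obstacle, is verifying the graph/surface-area bound $\lesssim N_2^{d-1}$ in the first regime uniformly in $\xi$, following the modification of \cite[Theorem A.3.3]{DinhPhD}.
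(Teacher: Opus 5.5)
Your overall structure matches the paper's, and your reduction to $\sup m\lesssim N_2^{d-1}N_1^{1-2\alpha}$ is correct. The comparable case $N_1\sim N_2$ is treated exactly as in the paper: Hölder into $L^4_{t,x}$, then Strichartz at regularity $\frac d4-\frac\alpha2$ per factor. For $d=3$ the pair $(4,4)$ is admissible but not sharp, so you need Corollary \ref{Strichartz_Xsb_embedding}, or Lemma \ref{Lemma_fractional_Strichartz_estimate} with the pair $(4,3)$ plus Bernstein, rather than Corollary \ref{Strichartz_Xsb}. Your exponent is nevertheless right. The separated case is the same transversality argument, written via Cauchy--Schwarz and the fiber measure $m(\tau,\xi)$ instead of the paper's duality and change of variables $(\xi_1,\eta)\mapsto(\tau,\zeta)$ with $J\sim N_1^{2\alpha-1}$.

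The gap is the step you flag yourself. The inference "$|\nabla\phi|\gtrsim N_1^{2\alpha-1}$, so the level set in the ball is a graph over a hyperplane, so its area is $\lesssim N_2^{d-1}$" is not valid as stated. A lower bound on $|\nabla\phi|$ alone does not make a level set a graph over a single hyperplane: a sphere has nonvanishing gradient everywhere. Nor does being a graph bound its area, since a steep graph over a disc of radius $N_2$ can have arbitrarily large area. What you need is a lower bound on the derivative in one fixed direction; the paper's normalization $|\eta_1|\sim N_1$ supplies exactly this.

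In your formulation this is easy because $\xi$ is frozen. If the fiber is nonempty, then $|\xi|\sim N_1$, and for every $|\eta|\lesssim N_2\ll N_1$ one has $(\xi-\eta)\cdot\hat\xi\geq|\xi-\eta|-2|\eta|\gtrsim N_1$. Hence
\[
|\partial_{\hat\xi}\phi(\eta)|\geq 2\alpha\big(|\xi-\eta|^{2\alpha-2}(\xi-\eta)\cdot\hat\xi-|\eta|^{2\alpha-1}\big)\gtrsim N_1^{2\alpha-1}
\]
on the whole ball, once $N_1\geq CN_2$ with $C=C(\alpha)$. Write $\eta=r\hat\xi+\underline\eta$ with $\underline\eta\perp\hat\xi$. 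Then $\phi$ is strictly monotone in $r$ on each chord of the ball. So $\int\delta(\tau+\phi)\,dr\leq\sup|\partial_{\hat\xi}\phi|^{-1}\lesssim N_1^{1-2\alpha}$, and integrating over $|\underline\eta|\lesssim N_2$ gives $m\lesssim N_2^{d-1}N_1^{1-2\alpha}$. No surface-area estimate is needed; equivalently, $|\nabla\phi|^{-1}d\sigma=|\partial_{\hat\xi}\phi|^{-1}d\underline\eta$ on the graph.

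With this inserted, your proof is complete. It also has a small advantage over the duality version: the direction $\hat\xi$ adapts to the output frequency, so you do not need an angular decomposition of the high-frequency variable.
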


\begin{proof}
Consider first the case $N_1 \sim N_2$. By Hölder's inequality, we have
\begin{equation*}
\|(S_\alpha(t) u ) (S_\alpha(t)v )\|_{L^2_tL^2_x}
\;\leq\;
\|S_\alpha(t) v\|_{L^4_tL^4_x}\,\|S_\alpha(t) u\|_{L^4_tL^4_x}.
\end{equation*}

Since $(4,4)$ is Schr\"odinger admissible for any $d\geq 2$,  Corollary \ref{Strichartz_Xsb_embedding} gives
\[
\|S_\alpha(t)u\|_{L^4_tL^4_x} \lesssim \|u\|_{H^{\frac{d}{4}-\frac{\alpha}{2}}} \sim  N_1^{\frac{d}{4}-\frac{\alpha}{2}}\|u\|_{L^2},
\]
and similarly for $v$. Hence we obtain
\[
\| ( S_\alpha(t)u) (S_\alpha(t)v) \|_{L^2_tL^2_x}
\;\lesssim\;
N_2^{\frac{d}{4}-\frac{\alpha}{2}}  N_1^{\frac{d}{4}-\frac{\alpha}{2}}\,
\|v\|_{L^2}\,\|u\|_{L^2}.
\]
Since $N_2\sim N_1$, we have $ N_2^{\frac{d}{4}-\frac{\alpha}{2}}  N_1^{\frac{d}{4}-\frac{\alpha}{2}}  \sim N_2^{\frac{d}{2} - \frac{1}{2}} {N_1}^{-(\alpha - \frac{1}{2})}$. This gives (\ref{Fractional_Bilinear_estimate}) in the case $N_2 \sim N_1$. 

Let us now consider the case $N_2 \ll N_1$. By duality, it suffices to prove
\begin{equation}
  \Bigg| \int_{\R^d \times \R^d} G(|\xi|^{2\alpha} + |\eta|^{2\alpha}, \xi + \eta) \widehat{v}(\xi)\widehat{u}(\eta)  \; d\xi \, d\eta \Bigg| \lesssim \frac{N_2^{\frac{d}{2} - \frac{1}{2}}} {N_1^{\alpha  - \frac{1}{2}}} \; \| G\|_{L^2_{\tau,\xi}} \; \| \widehat{u} \|_{L^2_\xi} \; \| \widehat{v} \|_{L^2_\xi}, \label{Fractional_Bilinear_estimate_eq1}
\end{equation}
for all $G \in L^2_{\tau,\xi}$. We may assume without loss of generality that $ |\xi_1| \sim N_2$ and $ |\eta_1| \sim N_1$, where $\xi = (\xi_1, \underline{\xi})$ and $\eta = (\eta_1, \underline{\eta})$ with $\underline{\xi}, \underline{\eta} \in \R^{d-1}$. We make a change of variables $\tau = |\xi|^{2\alpha} + |\eta|^{2\alpha}$, $\zeta = \xi + \eta$ and $d\tau d\zeta = J d\xi_1 d\eta$, where
\[
J
= \Big|2\alpha|\xi|^{2\alpha-2}\xi_1 \pm 2\alpha|\eta|^{2\alpha-2}\eta_1\Big|.
\]

Since $\alpha > \frac{1}{2}$ and $N_2 \ll N_1$, we have $N_2^{2\alpha-1}\ll N_1^{2\alpha-1}$, and thus
\[
J\geq 2\alpha\Big(|\eta|^{2\alpha-2}|\eta_1|-|\xi|^{2\alpha-2}|\xi_1|\Big)
\gtrsim N_1^{2\alpha-1}-N_2^{2\alpha-1}\gtrsim N_1^{2\alpha-1}.
\]

Additionally, $ J\lesssim N_1^{2\alpha-1}+N_2^{2\alpha-1}\lesssim N_1^{2\alpha-1}$ trivially, thus $J \sim N_1^{2\alpha - 1}$. The rest of the proof is carried out as in \cite[Theorem A.3.3]{DinhPhD} using the change of variables on the LHS of (\ref{Fractional_Bilinear_estimate_eq1}) to deduce that
\[
\text{LHS of } (\ref{Fractional_Bilinear_estimate_eq1}) \lesssim \| G \|_{L^2_{\tau,\xi}}  \;\frac{N_2^{\frac{d}{2} - \frac{1}{2}}}{N_1^{\alpha - \frac{1}{2}}} \; \| \widehat{u} \|_{L^2_\xi} \; \| \widehat{v} \|_{L^2_\xi},
\]
for any $G \in L^2_{\tau,\xi}$. This completes the proof.
\end{proof}

 To translate Proposition \ref{Theorem_A.3.3_Dinh_modified} into a bilinear estimate in $X^{s,b}$ spaces, we use the following bilinear version of  \cite[Lemma 2.9]{TaoDispersive}.
 
\begin{lemma} \label{Lemma_A.2.7_DinhPhD}
Let  $s_1, s_2 \in \R$, and let $Y$ be a Banach space of functions on $\R \times \R^d$ such that
\[ 
\|(e^{it\tau} S_\alpha(t)f_1)(e^{it\zeta} S_\alpha(t) f_2)\|_Y \lesssim \|f_1\|_{H^{s_1}_x} \|f_2\|_{H^{s_2}_x}, 
\]
for all $f_1 \in H^{s_1}_x$, $f_2 \in H^{s_2}_x$ and all $\tau, \zeta \in \R$. Then it holds
\[ \|u_1u_2 \|_Y \lesssim \|u_1\|_{X^{s_1,\frac{1}{2}+}} \|u_2\|_{X^{s_2,\frac{1}{2}+}}, \]
for all $u_1 \in X^{s_1, \frac{1}{2}+}, u_2 \in X^{s_2, \frac{1}{2}+}$.
\end{lemma}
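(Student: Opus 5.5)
The plan is the standard transference argument (as in \cite[Lemma 2.9]{TaoDispersive}), run in both factors. First I will write each $u_j$, $j=1,2$, as a superposition of modulated free solutions. Put $F_j(\tau,\cdot):=\F_x^{-1}\big(\widetilde{u}_j(\tau-|\xi|^{2\alpha},\xi)\big)$. Fourier inversion in $t$ then gives
\[
u_j(t) = \frac{1}{2\pi}\int_\R e^{it\tau}\, S_\alpha(t) F_j(\tau)\, d\tau .
\]
This uses the convention $\widetilde{S_\alpha(t)f}$ concentrated on $\tau=-|\xi|^{2\alpha}$, which matches the weight $\langle\tau+|\xi|^{2\alpha}\rangle$ in (\ref{Xsb_definition}), so the sign of the shift is fixed accordingly. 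By Plancherel in $\xi$ and the change of variables $\tau\mapsto\tau-|\xi|^{2\alpha}$,
\[
\|u_j\|_{X^{s_j,b}} = \big\| \langle\tau\rangle^{b}\, \|F_j(\tau)\|_{H^{s_j}_x}\big\|_{L^2_\tau}
\]
up to a constant.

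Second, I will write the product as a double integral,
\[
u_1u_2(t) = \frac{1}{4\pi^2}\int_\R\int_\R \big(e^{it\tau}S_\alpha(t)F_1(\tau)\big)\big(e^{it\zeta}S_\alpha(t)F_2(\zeta)\big)\, d\tau\, d\zeta .
\]
I then apply Minkowski's inequality in the Banach space $Y$, i.e.\ the triangle inequality for Bochner integrals. For Schwartz $u_j$ this is justified directly, and the general case follows by density of Schwartz functions in $X^{s_j,\frac12+}$. Applying the hypothesis for each fixed pair $(\tau,\zeta)$ gives
\[
\|u_1u_2\|_Y \lesssim \int_\R \|F_1(\tau)\|_{H^{s_1}}\,d\tau \int_\R \|F_2(\zeta)\|_{H^{s_2}}\,d\zeta .
\]

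Third, Cauchy--Schwarz in each variable gives
\[
\int_\R \|F_j(\tau)\|_{H^{s_j}}\,d\tau \le \|\langle\tau\rangle^{-b}\|_{L^2_\tau}\,\big\|\langle\tau\rangle^{b}\|F_j(\tau)\|_{H^{s_j}}\big\|_{L^2_\tau} \lesssim_b \|u_j\|_{X^{s_j,b}} .
\]
The factor $\|\langle\tau\rangle^{-b}\|_{L^2_\tau}$ is finite exactly when $b>\frac12$, which is where the $\frac12+$ enters. Multiplying the two bounds proves the lemma.

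The main obstacle is making the vector-valued Minkowski step rigorous in an abstract Banach space $Y$. One needs the integrand $(\tau,\zeta)\mapsto\big(e^{it\tau}S_\alpha(t)F_1(\tau)\big)\big(e^{it\zeta}S_\alpha(t)F_2(\zeta)\big)$ to be Bochner integrable in $Y$, and the $Y$-valued integral to coincide with the pointwise product $u_1u_2$. I will first treat $u_j$ whose space-time Fourier transforms are compactly supported Schwartz functions. In that case the integrand is continuous in $(\tau,\zeta)$ and the identity can be checked pointwise. The general case then follows by the density of Schwartz functions in $X^{s_j,\frac12+}$, taking $Y$-limits of the products.
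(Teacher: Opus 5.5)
Your proposal is correct and follows the argument the paper relies on. The paper omits the proof and defers to the standard transference argument of \cite[Lemma A.2.7]{DinhPhD}: superpose modulated free solutions, apply Minkowski's inequality in $Y$ with the hypothesis for each fixed $(\tau,\zeta)$, then use Cauchy--Schwarz in $\tau$ with $b>\frac12$, and extend to general $u_1,u_2$ by density of Schwartz functions in $X^{s,b}$, exactly as you do.

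One caveat: the hypothesis does not imply that the integrand is continuous in $(\tau,\zeta)$ as a $Y$-valued map for an arbitrary Banach space $Y$. That requires continuity of modulations $h\mapsto e^{it\sigma}h$ in $Y$, which holds by dominated convergence for $Y=L^2_{t,x}$, the only case the paper uses, and the paper's statement carries the same implicit restriction on $Y$.
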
 

The proof of Lemma \ref{Lemma_A.2.7_DinhPhD} is the same as \cite[Lemma A.2.7]{DinhPhD}, thus we omit it. The extension to all $u_1 \in X^{s_1, \frac{1}{2}+}, u_2 \in X^{s_2, \frac{1}{2}+}$ follows from the definition of the $X^{s,b}$ spaces in (\ref{Xsb_definition}).  

Proposition \ref{Theorem_A.3.3_Dinh_modified} and Lemma \ref{Lemma_A.2.7_DinhPhD}  then imply the following result.

\begin{corollary} \label{Proposition_2.3.d.alpha}
     Let $d\geq 2$, $\alpha \in (\frac{1}{2},1]$, and let $N_1, N_2 \in 2^\N$ with $N_2\leq N_1$. Let $u,v \in X^{0,\frac{1}{2}+}$ and suppose that
     \[
\operatorname{supp}\widehat{u}(t)\subset\{\langle \xi \rangle \sim N_1\},\qquad
\operatorname{supp}\widehat{v}(t)\subset\{\langle \xi \rangle \sim N_2\}.
\]
Then one has
     \begin{equation} \label{29.d.alpha}
         \left\|u v\right\|_{L_{t}^{2} L_{x}^{2}} \lesssim \frac{N_2^{\frac{d}{2}- \frac{1}{2}} } {N_1^{\alpha - \frac{1}{2}} }\left\|u\right\|_{X^{0, \frac{1}{2}+}}\left\|v\right\|_{X^{0, \frac{1}{2}+}}.  
     \end{equation}
\end{corollary}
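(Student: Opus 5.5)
The plan is to combine Proposition \ref{Theorem_A.3.3_Dinh_modified} with the transference Lemma \ref{Lemma_A.2.7_DinhPhD}, taking $Y = L^2_tL^2_x$ and $s_1=s_2=0$. The only catch is that Lemma \ref{Lemma_A.2.7_DinhPhD} asks for the linear bilinear bound for \emph{all} $f_1,f_2 \in L^2$, whereas Proposition \ref{Theorem_A.3.3_Dinh_modified} only gives it under the frequency-localization hypotheses. I will handle this by building the localization into the space $Y$ or, equivalently, into the data.

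Concretely, fix smooth cutoffs $\tilde\phi_{N_1},\tilde\phi_{N_2}$ that equal $1$ on the annuli $\{\langle\xi\rangle\sim N_1\}$ and $\{\langle\xi\rangle\sim N_2\}$ containing the supports of $\widehat u(t)$ and $\widehat v(t)$. They should be supported on slightly enlarged annuli, still of the form $\langle \xi\rangle \sim N_j$. Let $P_j$ denote the corresponding Fourier multipliers. Since the supports of $\widehat{u}(t)$ and $\widehat v(t)$ are contained in these annuli for every $t$, we have $u=P_1u$ and $v=P_2v$. So it suffices to prove
\[
\|(P_1u_1)(P_2u_2)\|_{L^2_{t,x}} \lesssim \frac{N_2^{\frac d2-\frac12}}{N_1^{\alpha-\frac12}}\|u_1\|_{X^{0,\frac12+}}\|u_2\|_{X^{0,\frac12+}}
\]
for arbitrary $u_1,u_2\in X^{0,\frac12+}$.

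To do this, I would apply the transference argument to the bilinear operator $(u_1,u_2)\mapsto (P_1u_1)(P_2u_2)$. The multipliers $P_j$ act only in $x$, so they commute with $S_\alpha(t)$ and with the modulations $e^{it\tau}$. Hence the hypothesis of Lemma \ref{Lemma_A.2.7_DinhPhD} becomes
\[
\|(e^{it\tau}S_\alpha(t)P_1f_1)(e^{it\zeta}S_\alpha(t)P_2f_2)\|_{L^2_{t,x}} \lesssim \frac{N_2^{\frac d2-\frac12}}{N_1^{\alpha-\frac12}}\|f_1\|_{L^2}\|f_2\|_{L^2}.
\]
The phases $e^{it\tau}$ and $e^{it\zeta}$ are unimodular and drop out of the $L^2_{t,x}$ norm. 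The remaining bound is exactly Proposition \ref{Theorem_A.3.3_Dinh_modified} applied to $P_1f_1$ and $P_2f_2$, which have the required Fourier supports, together with $\|P_jf_j\|_{L^2}\le\|f_j\|_{L^2}$. The harmless enlargement of the annuli only changes implicit constants. If $N_2\le N_1$ but the enlarged annuli are comparable, we fall into the case $N_1\sim N_2$ of the proposition, which is still covered.

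I would then check that the proof of Lemma \ref{Lemma_A.2.7_DinhPhD} goes through for this modified bilinear operator. That proof writes $u_j(t)=\int e^{it\tau_j}S_\alpha(t)f_{j,\tau_j}\,d\tau_j$, with $\widehat{f_{j,\tau_j}}(\xi)=\widetilde{u_j}(\tau_j-|\xi|^{2\alpha},\xi)$, uses Minkowski's inequality in $Y$, and finishes with Cauchy--Schwarz in $\tau_j$ using $b>\frac12$. Each step is linear in each $f_j$ and commutes with $P_j$, so the lemma holds verbatim with the stated constant. This verification is the only real obstacle, and it is routine: the key point is that the frequency supports are preserved under the decomposition, or are imposed by the projections. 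The estimate then follows directly.
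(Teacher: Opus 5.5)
Your proposal is correct and follows the paper's own argument: the corollary comes from feeding the free-solution bilinear estimate of Proposition~\ref{Theorem_A.3.3_Dinh_modified} into the transference Lemma~\ref{Lemma_A.2.7_DinhPhD} with $Y=L^2_tL^2_x$ and $s_1=s_2=0$. Your handling of the frequency localization makes explicit a point the paper leaves implicit; you can do this either by inserting the projections $P_j$, or by noting that the decomposition $u_j(t)=\frac{1}{2\pi}\int e^{it\lambda}S_\alpha(t)f_{j,\lambda}\,d\lambda$ preserves the Fourier supports.
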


In our analysis, we will have to work with a characteristic function $\chi$ of a time interval. To deal with $\chi$, we establish bilinear $X^{s,b}$-estimates where one of the components is multiplied by a characteristic function $\chi$. To obtain such bounds, we first need the following consequence of Corollary \ref{Proposition_2.3.d.alpha}.

\begin{corollary}\label{Corollary_2.4.d.alpha}
Let $d\geq 2$, $\alpha \in (\frac{1}{2},1]$, let $N_1, N_2 \in 2^\N$ with $N_2\leq N_1$.  Let $u,v \in X^{0,\frac{1}{2}+}$ and suppose that
     \[
\operatorname{supp}\widehat{u}(t)\subset\{\langle \xi \rangle \sim N_1\},\qquad
\operatorname{supp}\widehat{v}(t)\subset\{\langle \xi \rangle \sim N_2\}.
\]
Then it holds that
\begin{equation}\label{30.d2}
\|uv\|_{L^{2+}_t L^2_x} \;\lesssim\; \frac{N_2^{\frac{1}{2}}}{N_1^{(\alpha-\frac{1}{2})-}}
\|u\|_{X^{0,\frac12+}}\|v\|_{X^{0,\frac12+}}, \qquad d=2, 
\end{equation}
and
\begin{equation}\label{30.d3}
\|uv\|_{L^{2+}_t L^2_x} \;\lesssim\; \frac{N_2^{(\frac{d}{2} - \frac{1}{2})-}}{N_1^{(\alpha-\frac{1}{2})-}}
\|u\|_{X^{0,\frac12+}}\|v\|_{X^{0,\frac12+}}, \qquad d\geq 3. 
\end{equation}
\end{corollary}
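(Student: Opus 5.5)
The plan is to interpolate the bilinear $L^2_tL^2_x$ estimate (\ref{29.d.alpha}) of Corollary \ref{Proposition_2.3.d.alpha} with a crude $L^\infty_tL^2_x$ estimate for the product $uv$. The small loss in the exponent of $N_1$ (and, when $d\geq 3$, of $N_2$) pays for raising the time exponent from $2$ to $2+$.

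First, the endpoint bound. By Hölder, then (\ref{20.d.alpha}) for $u$ and (\ref{15.d.alpha}) for $v$, together with Bernstein for the frequency-localized $v$ (Sobolev embedding $H^{\frac d2+}\hookrightarrow L^\infty$ localized to $\langle\xi\rangle\sim N_2$), we get
\[
\|uv\|_{L^\infty_tL^2_x}\le \|u\|_{L^\infty_tL^2_x}\|v\|_{L^\infty_{t,x}}\lesssim N_2^{\frac d2}\|u\|_{X^{0,\frac12+}}\|v\|_{X^{0,\frac12+}} .
\]
The cleaner route is Bernstein, $\|v(t)\|_{L^\infty}\lesssim N_2^{d/2}\|v(t)\|_{L^2}$, followed by (\ref{20.d.alpha}).

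Next, the interpolation. Since $\|F\|_{L^{p}_t}\le \|F\|_{L^2_t}^{2/p}\|F\|_{L^\infty_t}^{1-2/p}$, applied to $F(t)=\|uv(t)\|_{L^2_x}$ with $p=2+$, i.e.\ $\theta=1-\frac2p=0+$, we obtain
\[
\|uv\|_{L^{2+}_tL^2_x}\lesssim \Bigl(\tfrac{N_2^{\frac d2-\frac12}}{N_1^{\alpha-\frac12}}\Bigr)^{1-\theta}N_2^{\frac d2\theta}\,\|u\|_{X^{0,\frac12+}}\|v\|_{X^{0,\frac12+}},
\]
which equals $N_2^{\frac d2-\frac12+\frac\theta2}N_1^{-(\alpha-\frac12)(1-\theta)}$. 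The factor $N_1^{-(\alpha-\frac12)(1-\theta)}$ is exactly $N_1^{-(\alpha-\frac12)+}$. The $N_2$-exponent comes out as $(\frac d2-\frac12)+$ rather than $-$. Since $N_2\le N_1$, I will transfer this small excess onto $N_1$: write $N_2^{\theta/2}\le N_1^{\theta/2}$. That gives $N_2^{\frac d2-\frac12}/N_1^{(\alpha-\frac12)-}$, which is the $d=2$ claim (\ref{30.d2}) and is even stronger than (\ref{30.d3}) for $d\ge3$, since $N_2^{\frac d2-\frac12}\lesssim N_2^{(\frac d2-\frac12)-}N_1^{0+}$ can again be absorbed into $N_1$.

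The main obstacle is bookkeeping of the $\epsilon$'s. The notation $x-$ hides a loss that must be shared between $N_1$ and $N_2$, and the statement's form for $d\ge3$ suggests the authors perhaps interpolated differently. One alternative is the $L^4_tL^4_x$ Strichartz route with $L^\infty_t$ bounds. Another, probably the intended one, is Lemma \ref{Lemma_A.2.7_DinhPhD} applied with $Y=L^{2+}_tL^2_x$, interpolating at the level of free solutions between (\ref{Fractional_Bilinear_estimate}) and the trivial $L^\infty_t$ bound; this avoids requiring $b>\frac12$ bounds twice. If the direct interpolation of the $X^{s,b}$ bounds poses no issue (both endpoint bounds use the same norms $X^{0,\frac12+}$, so it is a pointwise-in-$t$ Hölder interpolation and does not require complex interpolation), the argument above suffices. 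I only need to check that the $\epsilon$-loss in the $N_1$ exponent remains positive, i.e.\ $\theta\bigl(\alpha-\frac12\bigr)+\frac\theta2$ is small, which holds by choosing $p$ close to $2$.
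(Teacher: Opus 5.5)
Your proposal is correct and follows the paper's argument: Hölder interpolation in $t$ between the bilinear $L^2_tL^2_x$ bound (\ref{29.d.alpha}) and a Bernstein-type $L^\infty_tL^2_x$ endpoint obtained via (\ref{20.d.alpha}). The only difference is the endpoint. The paper uses $\|u\|_{L^\infty_tL^4_x}\|v\|_{L^\infty_tL^4_x}\lesssim N_1^{d/4}N_2^{d/4}(\cdots)$, which gives exactly $N_2^{1/2}$ for $d=2$ and gives $N_2^{(\frac d2-\frac12)-}$ directly for $d\geq 3$ (since $\frac d4<\frac d2-\frac12$). Your $L^2\times L^\infty$ split gives the sharper $N_2^{d/2}$ endpoint, and you correctly move the $N_2^{\theta/2}$ excess onto $N_1$ using $N_2\leq N_1$.
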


\begin{proof}
By Bernstein's inequality and (\ref{20.d.alpha}), we observe that
\begin{equation}\label{31.d.alpha}
\|uv\|_{L^\infty_t L^2_x} \leq \|u\|_{L^\infty_t L^4_x}\|v\|_{L^\infty_t L^4_x}
\lesssim N_1^{\frac d4} N_2^{\frac d4}\,\|u\|_{L^\infty_t L^2_x}\|v\|_{L^\infty_t L^2_x}
\lesssim N_1^{\frac d4} N_2^{\frac d4}\,\|u\|_{X^{0,\frac12+}}\|v\|_{X^{0,\frac12+}}. 
\end{equation}
Then (\ref{30.d2}) and (\ref{30.d3}) follow by interpolating (\ref{29.d.alpha}) and (\ref{31.d.alpha}).  Note that, for $d=2$, the exponent of $N_2$ in both (\ref{29.d.alpha}) and (\ref{31.d.alpha}) is $\frac{1}{2}$, while for $d\geq 3$, the exponents of $N_2$ in (\ref{29.d.alpha}) and (\ref{31.d.alpha}) satisfy
\[
\frac{d}{4}<\frac{d}{2}-\frac{1}{2}.
\]
This completes the proof.
\end{proof}

Now, let $\chi(t)=\chi_{[t_0,t_0+\delta]}(t)$ be the characteristic function of the time interval $[t_0,t_0+\delta]$, where $\delta>0$ depends on the initial datum at time $t=t_0$.\footnote{This dependence is due to Proposition \ref{Proposition_3.1_d.alpha}, as we will see later on.} Since $\chi$ is not smooth, it is difficult to work with it directly. Instead, we decompose $\chi$ into two components that are easier to work with. This method was first used by  Colliander et al. \cite{CKSTT_global_not_refined} (see also \cite{Sohinger_Hartree_2D}).

First, fix  $\varphi\in C^\infty_0(\R)$ with $0\leq\varphi\leq1$ and $\int_{\R}\varphi(t)\,dt=1$.
For $\lambda>0$ define the normalised dilation $\varphi_\lambda(t):=\frac{1}{\lambda}\varphi\!\left(\frac{t}{\lambda}\right)$,
so that 
\[
\|\varphi_\lambda\|_{L^1_t}=\|\varphi\|_{L^1_t}=1.
\]

For any $N>1$  we decompose
\begin{equation} \label{equation_33.d.alpha}
\chi(t)=a(t)+b(t),\qquad a:=\chi*\varphi_{N^{-1}},\quad b:=\chi-a. 
\end{equation}

By  \cite[Lemma 8.2]{CKSTT_global_not_refined}, $a$ satisfies
\begin{equation} \label{34.d.alpha}
\|a(t) f\|_{X^{0,\frac12+}}\lesssim N^{0+}\|f\|_{X^{0,\frac12+}}.
\end{equation}

Moreover, for any $\ell\in(1,\infty)$, by Young's inequality we have
\begin{equation} \label{b_bound}
\|b\|_{L^\ell_t}=\|\chi-\chi*\varphi_{N^{-1}}\|_{L^\ell_t}\leq\|\chi\|_{L^\ell_t}+\|\chi*\varphi_{N^{-1}}\|_{L^\ell_t}
\leq 2\|\chi\|_{L^\ell_t}=C(\ell,\chi).    
\end{equation}

Define
\begin{equation} \label{b_1}
b_1(t):= \frac{1}{2\pi} \int_{\R}|\widehat b(\tau)|e^{it\tau}\, d\tau. 
\end{equation}
Then, by (\ref{b_bound}) it follows that\footnote{To obtain (\ref{36.d.alpha}) we argue as follows. By the definition of $b$ in  (\ref{equation_33.d.alpha}) and of $b_1$ in (\ref{b_1}), and the fact that 
\begin{equation*}
|\widehat{\chi}(\tau)| \lesssim_{\delta} \frac{1}{\langle \tau \rangle}\,,
\end{equation*}
it follows that 
\begin{equation}
\label{b_1_bound_1}
|\widehat{b_1}(\tau)|=|\widehat{b}(\tau)| \lesssim_{\delta} \frac{1}{\langle \tau \rangle}\,,
\end{equation}

From \eqref{b_1_bound_1}, it follows that 
\begin{equation}
\label{b_1_bound_2}
\|b_1\|_{H^{\frac{1}{2}-}_t} \leq C(\chi)\,.
\end{equation}

By Sobolev embedding and \eqref{b_1_bound_2}, we obtain, for $\ell$ large but finite, that 
\begin{equation*}
\label{b_1_bound_3}
\|b_1\|_{L^\ell_t} \lesssim_\ell \|b_1\|_{H^{\frac{1}{2}-}_t} \leq C(\ell,\chi)\,,
\end{equation*}
as desired.
}
\begin{equation}\label{36.d.alpha}
\|b_1\|_{L^\ell_t}\leq C(\ell,\chi)=C(\ell,u(t_0)), 
\end{equation}
where the final dependence follows because $\chi$ depends on $\delta$, and $\delta$ depends on the initial datum at time $t=t_0$.\\

Based on the analysis above, we can now prove the following useful result.

\begin{proposition} \label{Proposition_2.5.d.alpha}
Let $u,v\in X^{0,\frac12+}$ such that 
$\text{supp }\widehat{u}(t)\subset\{\langle\xi \rangle\sim N_1\},
\text{supp }\widehat{v}(t)\subset\{\langle \xi \rangle\sim N_2\}$, where $N_1 \gtrsim N_2$. Define
\[
\widetilde{u_1}(\tau,\xi):=\big|\widetilde{\chi u}(\tau,\xi)\big|,\qquad
\widetilde{\,v_1}(\tau, \xi):=\big|\widetilde{v}(\tau,\xi)\big|.
\]
Then we have
\begin{equation}\label{37.d.alpha}
\|u_1 v_1\|_{L^2_{t,x}}\;\lesssim \frac{N_2^{\frac{d}{2}-\frac{1}{2}}}{N_1^{(\alpha -\frac{1}{2})-}}
\|u\|_{X^{0,\frac12+}}\|v\|_{X^{0,\frac12+}}, \qquad d\geq 2.
\end{equation}
The same bounds hold if 
\begin{equation}  \label{Proposition_2.5.d.alpha_Second_case}
\widetilde{u_1}(\tau, \xi):=\big|\widetilde{ u}(\tau, \xi)\big|,\qquad
\widetilde{\, v_1}(\tau, \xi):=\big|\widetilde{\chi v}(\tau,\xi)\big|.    
\end{equation}
\end{proposition}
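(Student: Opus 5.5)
We use the decomposition \eqref{equation_33.d.alpha}, $\chi=a+b$, applied to the factor carrying $\chi$. The argument is written for the first case, where $\widetilde{u_1}=|\widetilde{\chi u}|$; the second case \eqref{Proposition_2.5.d.alpha_Second_case} is identical with the roles of $u$ and $v$ swapped.

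First we reduce to the pieces of the decomposition. Since $\widetilde{\chi u}=\widetilde{au}+\widetilde{bu}$, we have pointwise
\[
|\widetilde{\chi u}|\le |\widetilde{au}|+|\widetilde{bu}|.
\]
The $L^2_{t,x}$ norm of $u_1v_1$ equals, by Plancherel, the $L^2$ norm of the convolution $|\widetilde{\chi u}|*|\widetilde v|$. This convolution is monotone in nonnegative inputs, so it suffices to bound the two contributions $w_a$ and $w_b$, defined by $\widetilde{w_a}=|\widetilde{au}|$ and $\widetilde{w_b}=|\widetilde{bu}|$. Taking absolute values of the Fourier transform does not change the Fourier support in $\xi$, nor any $X^{s,b}$ norm.

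For the $a$-part, we have $\|w_a\|_{X^{0,\frac12+}}=\|au\|_{X^{0,\frac12+}}\lesssim N^{0+}\|u\|_{X^{0,\frac12+}}$ by \eqref{34.d.alpha}. Likewise $\|v_1\|_{X^{0,\frac12+}}=\|v\|_{X^{0,\frac12+}}$. Corollary~\ref{Proposition_2.3.d.alpha} then gives the bound
\[
N_2^{\frac d2-\frac12}N_1^{-(\alpha-\frac12)}\,N^{0+}\,\|u\|_{X^{0,\frac12+}}\|v\|_{X^{0,\frac12+}}.
\]
Here we choose the threshold $N=N_1$ in \eqref{equation_33.d.alpha}. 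The decomposition is done at the scale of the high frequency, which is legitimate since the statement is dyadic-localized. With this choice $N^{0+}=N_1^{0+}$ is absorbed into the loss $N_1^{(\alpha-\frac12)-}$.

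For the $b$-part, write $\widetilde{bu}(\tau,\xi)=\frac{1}{2\pi}\int \widehat b(\tau-\sigma)\widetilde u(\sigma,\xi)\,d\sigma$. Then
\[
|\widetilde{bu}|\le \tfrac{1}{2\pi}\,|\widehat b|*_\tau|\widetilde u| = \widetilde{b_1 u_2},
\qquad \widetilde{u_2}:=|\widetilde u|,
\]
with $b_1$ as in \eqref{b_1}. Hence $\|w_b v_1\|_{L^2_{t,x}}\le \|(b_1u_2)v_1\|_{L^2_{t,x}}$, again by positivity of the Fourier transforms. We apply H\"older in time with exponents $\frac12=\frac1\ell+\frac1{2+}$, $\ell$ large:
\[
\|b_1u_2v_1\|_{L^2_{t,x}}\le \|b_1\|_{L^\ell_t}\,\|u_2v_1\|_{L^{2+}_tL^2_x}.
\]
The first factor is controlled by \eqref{36.d.alpha}. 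The second is controlled by Corollary~\ref{Corollary_2.4.d.alpha}, since $u_2$ and $v_1$ have the same $X^{0,\frac12+}$ norms and the same frequency supports as $u$ and $v$. This yields $N_2^{\frac d2-\frac12}N_1^{-(\alpha-\frac12)+}$ for $d=2$, and $N_2^{(\frac d2-\frac12)-}N_1^{-(\alpha-\frac12)+}$ for $d\ge3$. Both are at most the bound in \eqref{37.d.alpha}, since $N_2\ge1$.

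The main obstacle is the rigorous justification of the positivity/monotonicity reductions at the Fourier level. We must check that replacing $\widetilde{bu}$ by $\widetilde{b_1u_2}$, i.e.\ moving absolute values inside the time convolution, and passing from $|\widetilde{\chi u}|$ to the sum of the two pieces, are valid inside the $L^2$ norm of a product. This amounts to Plancherel plus the pointwise inequality $|f*g|\le|f|*|g|$.

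A second point to handle is that the constant from \eqref{36.d.alpha} depends on $\chi$, hence on $u(t_0)$. This dependence is harmless, and is implicit in the $\lesssim$ of \eqref{37.d.alpha}.

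Summing the two contributions gives \eqref{37.d.alpha}.
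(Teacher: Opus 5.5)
Your proposal is correct and follows the paper's proof essentially step by step. You split $\chi=a+b$ at threshold $N=N_1$ and treat the $a$-part with \eqref{34.d.alpha} and Corollary~\ref{Proposition_2.3.d.alpha}; for the $b$-part you dominate $|\widetilde{bu}|$ by $\widetilde{b_1u_+}$ with $\widetilde{u_+}=|\widetilde u|$, then apply H\"older in time together with \eqref{36.d.alpha} and Corollary~\ref{Corollary_2.4.d.alpha}, exactly as the paper does.
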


\begin{proof}
    First, note that, since $\chi(t) = a(t) + b(t)$, we have
    \[
    |\widetilde{\chi u}(\tau,\xi)| \leq \abs{\int_{\R} \widehat{a}(\sigma - \tau) \, \tilde{u}(\sigma, \xi) d \sigma } +  \abs{\int_{\R} \widehat{b}(\sigma - \tau) \, \tilde{u}(\sigma, \xi) d \sigma} = |\widetilde{a u}(\tau,\xi)| + |\widetilde{bu}(\tau,\xi)|. 
    \]
    Thus,
    \begin{align*}
        \|u_1 v_1\|_{L_{t,x}^2}  = \|\widetilde{u_1 v_1}\|_{L_{\tau,\xi}^2} & = \| |\widetilde{\chi u}| * |\tilde{v}| \|_{L_{\tau,\xi}^2}  \\[6pt]
        & \leq  \| |\widetilde{a u}| * |\widetilde{v}| \|_{L_{\tau,\xi}^2} +  \| |\widetilde{b u}| * |\widetilde{v}| \|_{L_{\tau,\xi}^2}   \\[6pt]
        & \sim \| u_{a}  v_1 \|_{L_{t,x}^2} +  \|u_{b}  v_1 \|_{L_{t,x}^2},
    \end{align*}
    where $\widetilde{u_{a}} := |\widetilde{au}|$ and $\widetilde{\, u_{b}} := |\widetilde{bu}|$. Notice that $\|u_{a}\|_{X^{0,\frac{1}{2}+}} = \|au\|_{X^{0,\frac{1}{2}+}}$,  and similarly $\|u_{b}\|_{X^{0,\frac{1}{2}+}} = \|bu\|_{X^{0,\frac{1}{2}+}}$, as well as $\|v_1\|_{X^{0,\frac{1}{2}+}} = \|v\|_{X^{0,\frac{1}{2}+}}$. Thus, by using (\ref{34.d.alpha}) with $N=N_1$ we get
    \[
    \|u_{a}\|_{X^{0,\frac{1}{2}+}} = \|au\|_{X^{0,\frac{1}{2}+}} \lesssim N_1^{0+} \|u\|_{X^{0,\frac{1}{2}+}}.
    \]
    
    Notice that $\operatorname{supp}{\widehat{u_{a}}}(t), \operatorname{supp}{\widehat{u_{b}}}(t) \subset \{\langle \xi \rangle \sim N_1\}$,  and $\operatorname{supp}{\widehat{v_{1}}}(t) \subset \{\langle \xi \rangle \sim N_2\}$. Thus, for $d=2$, 
     Corollary \ref{Proposition_2.3.d.alpha} implies that
        \begin{equation} \label{ua_bound}
         \| u_{a} \, v_1 \|_{L_{t,x}^2} \lesssim \frac{N_2^{\frac{1}{2}}}{N_1^{\alpha -\frac{1}{2}}}
\|u_{a}\|_{X^{0,\frac12+}}\|v_1\|_{X^{0,\frac12+}} \lesssim \frac{N_2^{\frac{1}{2}}}{N_1^{(\alpha -\frac{1}{2})-}}
\|u\|_{X^{0,\frac12+}}\|v\|_{X^{0,\frac12+}}.
        \end{equation}
        
        Now, $0 \leq \widetilde{{u}_{b}} \leq \widetilde{b_1 u_+}$, where $\widehat{b_1} = |\widehat{b}|$, and $\widetilde{u_+} = |\widetilde{u}|$. Thus, using (\ref{36.d.alpha}), for $\ell>2$ large we get
        \begin{equation} \label{ub_bound_1}
          \| u_{b} \, v_1 \|_{L_{t,x}^2} \leq   \|(b_1 u_+) \, v_1 \|_{L_{t,x}^2} \leq \|b_1\|_{L_t^\ell} \, \| u_+  v_1 \|_{L_t^{2+} L_x^2} \lesssim \| u_+  v_1 \|_{L_t^{2+} L_x^2}.
        \end{equation}
        
    Additionally, by Corollary \ref{Corollary_2.4.d.alpha}, and since $\|u_+\|_{X^{0,\frac{1}{2}+}} = \|u\|_{X^{0,\frac{1}{2}+}}$, we get
    \begin{equation} \label{ub_bound_2}
     \| u_+  v_1 \|_{L_t^{2+} L_x^2} \lesssim \frac{N_2^{\frac{1}{2}}}{N_1^{(\alpha - \frac{1}{2})-}}
\|u\|_{X^{0,\frac12+}}\|v\|_{X^{0,\frac12+}}.
    \end{equation}
    
    Combining the estimates (\ref{ua_bound}), (\ref{ub_bound_1}) and (\ref{ub_bound_2}), we deduce (\ref{37.d.alpha}).
   Likewise, for $d\geq 3$, we have
    \[
         \| u_{a}  v_1 \|_{L_{t,x}^2} \lesssim \frac{N_2^{\frac{d}{2}-\frac{1}{2}}}{N_1^{\alpha - \frac{1}{2}}}
\|u_{a}\|_{X^{0,\frac12+}}\|v_1\|_{X^{0,\frac12+}} \lesssim \frac{N_2^{\frac{d}{2}-\frac{1}{2}}}{N_1^{(\alpha -\frac{1}{2})-}}
\|u\|_{X^{0,\frac12+}}\|v\|_{X^{0,\frac12+}},
        \]
        and
        \[
     \| u_b  v_1 \|_{L_t^{2} L_x^2} \lesssim \frac{N_2^{(\frac{d}{2}-\frac{1}{2})- }}{N_1^{(\alpha -\frac{1}{2})-}}
\|u\|_{X^{0,\frac12+}}\|v\|_{X^{0,\frac12+}},
    \]
   which imply (\ref{37.d.alpha}). When $u_1$ and $v_1$ are given by (\ref{Proposition_2.5.d.alpha_Second_case}), we argue analogously. This completes the proof.
\end{proof}

\section{Proof of Theorem \texorpdfstring{\ref{Main_Theorem}}{} }  \label{Main_theorem_proof} 

\subsection{Definition of the \texorpdfstring{$\D$}{}-operator}

As mentioned in the introduction, our analysis is based on the \textit{upside-down I-method}. Thus, below we define an appropriate \textit{upside-down I-operator} \cite{Sohinger_S1, Sohinger_R, Sohinger_Hartree_2D}. First, let us define the following multiplier.

Fix $s > \frac{d}{2}$, and let $N > 1$ be given. We define $\theta = \theta_N : \R^d \rightarrow \R$ by
\[ 
\theta(\xi) := \begin{cases} 
\big(\frac{|\xi|}{N} \big)^s & \text{ if } |\xi| \geq 2N, \\
1 & \text{ if } |\xi| \leq N,
\end{cases} 
\] 
extended to all of $\R^d$ as a radial, smooth function that satisfies $0<c\leq  \theta(\xi) \leq C<\infty$ for $N\leq |\xi|\leq 2N$.  Arguing as in \cite[pages 19-20]{Sohinger_R}, one can show that 
\begin{equation} \label{theta_properties}
\|\nabla\theta(\xi)\| \lesssim \frac{\theta(\xi)}{|\xi|} , \qquad \|\nabla^2\theta(\xi)\| \lesssim \frac{\theta(\xi)}{|\xi|^2}.   
\end{equation}
Now, for $f : \R^d \rightarrow \mathbb{C}$, we define the operator $\D$ by
\[ 
\widehat{\D f}(\xi) := \theta(\xi) \widehat{f}(\xi).
\]
By the definition of $\D$, it holds that
\begin{equation} \label{D_scaling}
    \| \D f\|_{L^2} \lesssim \|f\|_{H^s} \lesssim N^s \| \D f\|_{L^2}.
\end{equation}

Moreover, we have the following useful inequality.

\begin{lemma} \label{Kato_Ponce_for_D}
    Let $f, g \in H^s(\R^d)$ with $s> \frac{d}{2}$. Then 
    \begin{equation*}
\|\mathcal D(fg)\|_{L^2}
\lesssim
\|f\|_{L^\infty}\|\mathcal D g\|_{L^2}
+
\|\mathcal D f\|_{L^2}\|g\|_{L^\infty},
    \end{equation*}
    where the implicit constant is independent of $N$.
\end{lemma}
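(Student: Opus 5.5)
The plan is to reduce to the standard Kato--Ponce inequality in $H^s\cap L^\infty$, which the paper says it proves in the appendix (and which is classical): for $s>0$,
\[
\|\langle\nabla\rangle^s(fg)\|_{L^2}\lesssim \|f\|_{L^\infty}\|\langle\nabla\rangle^s g\|_{L^2}+\|\langle\nabla\rangle^s f\|_{L^2}\|g\|_{L^\infty}.
\]
The obstacle is uniformity in $N$, since $\theta_N$ is not a fixed power of $\langle\xi\rangle$. To handle this I would compare $\theta_N$ with the symbol $m_N(\xi):=\langle \xi/N\rangle^s$. By the definition of $\theta$, $\theta_N(\xi)\sim m_N(\xi)$ uniformly in $N$ and $\xi$. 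Moreover, $\theta_N/m_N$ and $m_N/\theta_N$ are smooth functions of $\xi/N$ that satisfy Mikhlin-type symbol bounds uniformly in $N$. This follows because $\theta_N(\xi)=\theta_1(\xi/N)$, which I would check from the construction: choose the smooth extension on $N\le|\xi|\le 2N$ by rescaling a fixed one. If that choice is not made, use (\ref{theta_properties}) together with its higher-order analogues. Since both multipliers are $L^2$-bounded with constants independent of $N$, it suffices to prove
\[
\|m_N(\nabla)(fg)\|_{L^2}\lesssim \|f\|_{L^\infty}\|m_N(\nabla) g\|_{L^2}+\|m_N(\nabla) f\|_{L^2}\|g\|_{L^\infty}.
\]
(On $L^2$ only boundedness of the multiplier is needed, so just $\theta_N\sim m_N$ suffices.)

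Next I would remove $N$ by scaling. Set $f_N(x)=f(x/N)$ and $g_N(x)=g(x/N)$. Then $m_N(\nabla)(fg)$ evaluated at $x/N$ equals $\langle\nabla\rangle^s(f_Ng_N)(x)$. All $L^2$ norms on both sides pick up the same factor $N^{d/2}$, and $L^\infty$ norms are invariant. The desired inequality therefore becomes exactly the Kato--Ponce inequality applied to $f_N,g_N$, with a constant independent of $N$. The hypothesis $s>d/2$ only guarantees that all quantities are finite, since $H^s\subset L^\infty$.

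If one prefers a direct argument, I would instead run a paraproduct decomposition $fg=\sum_{M}(P_{\ll M}f)(P_Mg)+(P_Mf)(P_{\ll M}g)+\sum_{M\sim M'}(P_Mf)(P_{M'}g)$. In the first two pieces, $\theta$ is essentially constant on the output annulus, so those terms are bounded using $\theta(\xi)\sim\theta(M)$ and almost orthogonality. The high--high piece uses $\theta(\xi)\lesssim\theta(M)$ for $|\xi|\lesssim M$, since $\theta$ is essentially increasing, followed by summation with Cauchy--Schwarz. Here the growth $\theta(M)\lesssim (M/M')^s\theta(M')$ for $M\ge M'$ with $s>0$ supplies the geometric decay. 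The main point to verify carefully is this uniform equivalence and monotonicity of $\theta_N$ across the transition region $N\le|\xi|\le2N$. The scaling route avoids most of that work, and I would present it first.
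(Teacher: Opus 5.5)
Your proof is correct and is essentially the paper's argument. Both proofs use only the pointwise equivalence $\theta_N(\xi)\sim 1+(|\xi|/N)^s$ together with Plancherel. The paper gets uniformity in $N$ from the scale invariance of the homogeneous Kato--Ponce inequality (the version actually proved in the appendix) plus H\"older for the $L^2$ part, whereas you rescale $x\mapsto x/N$ in the inhomogeneous version, which is the same thing since $\langle\xi\rangle^s\sim 1+|\xi|^s$. As you note yourself, the Mikhlin-type bounds and the paraproduct alternative are unnecessary.
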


The proof of Lemma \ref{Kato_Ponce_for_D} is an application of the Kato-Ponce inequality:

\begin{proposition}[Kato-Ponce inequality] \label{KP_extension_Hs_Linfty}
Let $\sigma\geq 0$. Then,
\begin{equation}\label{KP_Hs_Linfty}
\||\nabla|^\sigma(fg)\|_{L^2}
\lesssim 
\|f\|_{L^\infty}\||\nabla|^\sigma g\|_{L^2}
+
\||\nabla|^\sigma f\|_{L^2}\|g\|_{L^\infty},
\end{equation}
for all $f,g\in H^\sigma(\R^d)\cap L^\infty(\R^d)$.
\end{proposition}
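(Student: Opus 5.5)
The statement to be proved is the Kato--Ponce inequality in $H^\sigma\cap L^\infty$, Proposition \ref{KP_extension_Hs_Linfty}. The plan is a Littlewood--Paley paraproduct decomposition of $fg$ followed by a Bernstein estimate on each piece. Write $P_{<K}$ for the projection onto frequencies $\langle\xi\rangle \ll K$, built from the functions $\phi_N$ of Section \ref{Terminology_and_known_facts} (homogeneous pieces near zero cause no trouble, since $\sigma\ge 0$). Decompose
\[
fg=\sum_{K} (P_{<K/8}f)\,g_K+\sum_{K} f_K\,(P_{<K/8}g)+\sum_{K\sim K'} f_K\, g_{K'} =: \Pi_1+\Pi_2+\Pi_3,
\]
that is, low--high, high--low, and high--high interactions. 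By symmetry, $\Pi_2$ is handled exactly like $\Pi_1$.

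\emph{Low--high term.} The Fourier transform of $(P_{<K/8}f)g_K$ is supported in an annulus $|\xi|\sim K$. Hence the pieces are almost orthogonal and
\[
\||\nabla|^\sigma \Pi_1\|_{L^2}^2\lesssim \sum_K K^{2\sigma}\|(P_{<K/8}f)g_K\|_{L^2}^2 .
\]
Using $\|P_{<K/8}f\|_{L^\infty}\lesssim\|f\|_{L^\infty}$ (the convolution kernel is uniformly in $L^1$), this is bounded by
\[
\|f\|_{L^\infty}^2\sum_K K^{2\sigma}\|g_K\|_{L^2}^2\sim \|f\|_{L^\infty}^2\||\nabla|^\sigma g\|_{L^2}^2 ,
\]
modulo the lowest block, which is controlled by $\|g\|_{L^2}$ and absorbed. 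If one insists on the purely homogeneous form, the lowest block can be handled by noting that $|\xi|^\sigma\lesssim 1$ there and that the homogeneous dyadic decomposition avoids it altogether. I would simply run the argument with homogeneous dyadic pieces $\dot P_K$, $K\in 2^{\mathbb Z}$.

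\emph{High--high term.} The product $f_Kg_{K'}$ with $K\sim K'$ has Fourier support in $|\xi|\lesssim K$. Projecting the output to frequency $M$ gives
\[
\|\dot P_M\Pi_3\|_{L^2}\lesssim \sum_{K\gtrsim M}\|f_K\|_{L^\infty}\|g_{K'}\|_{L^2}\lesssim \|f\|_{L^\infty}\sum_{K\gtrsim M}\|g_{K}\|_{L^2}.
\]
Multiplying by $M^\sigma$, we get $M^\sigma\sum_{K\gtrsim M}\|g_K\|_{L^2}=\sum_{K\gtrsim M}(M/K)^\sigma\,K^\sigma\|g_K\|_{L^2}$. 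This is a discrete convolution with the kernel $(M/K)^\sigma\mathbf 1_{K\gtrsim M}$, which is summable when $\sigma>0$. Young's inequality on $\ell^2(2^{\mathbb Z})$ then yields $\||\nabla|^\sigma\Pi_3\|_{L^2}\lesssim\|f\|_{L^\infty}\||\nabla|^\sigma g\|_{L^2}$. The case $\sigma=0$ is trivial by H\"older's inequality.

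\emph{Main obstacle.} The main obstacle is the high--high term. In the endpoint setting the $L^\infty$ norm does not commute well with the square function, so one must place the $L^\infty$ bound on the factor whose dyadic blocks are only uniformly bounded, namely $\|f_K\|_{L^\infty}\lesssim\|f\|_{L^\infty}$, and put all the summation on the $L^2$ side. The summation in $K$ requires $\sigma>0$, which is why $\sigma=0$ is treated separately. A secondary technical point is justifying almost orthogonality for $\Pi_1$ with the sharp annular supports. This is ensured by the $1/8$ separation, since $\operatorname{supp}\widehat{(P_{<K/8}f)g_K}\subset\{K/4\le|\xi|\le 4K\}$, giving bounded overlap.
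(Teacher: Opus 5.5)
Your argument is correct, but it takes a genuinely different route from the paper.

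\textbf{The paper's route.} The paper does not prove the product estimate itself. It quotes the Kato--Ponce inequality for Schwartz functions from Li (Theorem 1.9, (1.15), with $p_1=p_2=2$, $q_1=q_2=\infty$). The only work is a density argument:
\begin{itemize}
\item it builds $h_n=\rho_n*(\chi_n h)\in C_c^\infty$ with $h_n\to h$ in $H^\sigma$ and $\|h_n\|_{L^\infty}\le\|h\|_{L^\infty}$;
\item it passes to the limit in the left-hand side by Fatou's lemma on the Fourier side.
\end{itemize}

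\textbf{Your route.} You prove the estimate from scratch, directly for $f,g\in H^\sigma\cap L^\infty$, via a homogeneous paraproduct decomposition. In the $L^2$-based setting, almost orthogonality (Plancherel) replaces the square-function and multiplier estimates that the general $L^p$ endpoint theorem needs. The only inputs are the uniform bounds $\|P_{<K}f\|_{L^\infty},\|f_K\|_{L^\infty}\lesssim\|f\|_{L^\infty}$ and an $\ell^1*\ell^2$ Young inequality in the high--high term. That term is the only place where $\sigma>0$ enters.

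\textbf{Comparison.} Your argument is elementary and self-contained. It needs no approximation lemma, because $f,g\in L^2$ already makes every dyadic series converge. You should say this explicitly. For example, $\sum_{K\sim K'}\|f_Kg_{K'}\|_{L^1}\lesssim\|f\|_{L^2}\|g\|_{L^2}$, so $\Pi_3$ converges absolutely in $L^1$ and $\dot P_M$ may be applied termwise. The paper's route is shorter on the page. However, it rests on a much deeper black box, the $L^\infty$-endpoint Kato--Ponce inequality, which is more general than what is needed here.

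\textbf{One cleanup.} Commit to homogeneous blocks $\dot P_K$, $K\in 2^{\mathbb Z}$, from the start. With the inhomogeneous $\phi_N$ of Section~\ref{Terminology_and_known_facts}, the lowest block contributes $\|f\|_{L^\infty}\|g\|_{L^2}$. That term cannot be ``absorbed'' into the homogeneous right-hand side $\|f\|_{L^\infty}\||\nabla|^\sigma g\|_{L^2}$. Your eventual switch to homogeneous pieces is exactly what avoids this.
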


We prove Proposition \ref{KP_extension_Hs_Linfty} in Section \ref{Appendix}.

Using Proposition \ref{KP_extension_Hs_Linfty}, we can then show Lemma \ref{Kato_Ponce_for_D}.

\begin{proof}[Proof of Lemma \ref{Kato_Ponce_for_D}]
    Let $f, g \in H^s(\R^d)$ with $s>\frac{d}{2}$. By analysing the regions $|\xi| \leq N$, $N \leq |\xi| \leq 2N$ and $|\xi| \geq 2N$, it is not difficult to see that
 \begin{equation} \label{Kato_Ponce_for_D_D_equivalence}
 \|\mathcal D f\|_{L^2}
\sim
\|f\|_{L^2}+N^{-s}\||\nabla|^s f\|_{L^2},    
 \end{equation}
where the implicit constant is independent of $N$. By H\"{o}lder's inequality, we have
\begin{equation}\label{Kato_Ponce_for_D_Holder}
\|fg\|_{L^2}
\leq
\|f\|_{L^\infty}\|g\|_{L^2}
+
\|g\|_{L^\infty}\|f\|_{L^2},
\end{equation}
while by the Kato-Ponce inequality (\ref{KP_Hs_Linfty}) we also have
\begin{equation}\label{Kato_Ponce_for_D_KatoPonce}
\||\nabla|^s(fg)\|_{L^2}
\lesssim
\|f\|_{L^\infty}\||\nabla|^s g\|_{L^2}
+
\||\nabla|^s f\|_{L^2}\|g\|_{L^\infty}.
\end{equation}
Combining (\ref{Kato_Ponce_for_D_D_equivalence}), (\ref{Kato_Ponce_for_D_Holder}) and (\ref{Kato_Ponce_for_D_KatoPonce}), we deduce the desired inequality.
\end{proof}

\subsection{Local-in-time bounds} Let $d \in \{2,3\}$ and let $u$ be the global solution of (\ref{equation_fractional_hartree}) on $\R^d$ with $s>\frac{d}{2}$. Our goal is to estimate $\| \D u(t)\|_{L^2}$, which in turn controls $\| u\|_{H^s}$ by (\ref{D_scaling}). To achieve this, we need the following local-in-time bounds.

\begin{proposition} \label{Proposition_3.1_d.alpha} 
Let $d \in \{2,3\}$ and let $u$ be the global solution of (\ref{equation_fractional_hartree}) on $\R^d$ for $s>\frac{d}{2}$ and $\alpha \in (\frac{d}{4},1]$.
Then, there exist $\delta=\delta\big(s,E(u_0),M(u_0)\big)$ and 
$C=C\big(s,E(u_0),M(u_0)\big)>0$, continuous in mass and energy, such that for every $t_0\in\R$ there exists a globally defined function $v:\R\times\R^d\to\mathbb{C}$ with
\begin{align}
& v|_{[t_0,t_0+\delta]} = u|_{[t_0,t_0+\delta]}, \label{46.d.alpha} \\[6pt]
& \|v\|_{X^{\alpha,\frac{1}{2}+}} \leq C\big(s,E(u_0),M(u_0)\big), \label{47.d.alpha} \\[6pt]
& \|\mathcal{D} v\|_{X^{0,\frac{1}{2}+}} \leq C\big(s,E(u_0),M(u_0)\big) \, \|\D u(t_0)\|_{L^2}. \label{48.d.alpha}
\end{align}
\end{proposition}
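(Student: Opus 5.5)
We will prove Proposition \ref{Proposition_3.1_d.alpha} by a standard contraction argument in $X^{s,b}$ spaces on the time interval $[t_0,t_0+\delta]$, carried out at two regularity levels: first at the energy level $H^\alpha$, then at the $\D$-level. By time translation invariance we take $t_0=0$. Fix a smooth time cutoff $\eta\in C_c^\infty(\R)$ with $\eta\equiv1$ on $[0,1]$, and set $\eta_\delta(t)=\eta(t/\delta)$. We look for $v$ as the fixed point of
\[
\Phi(v)(t)=\eta(t)S_\alpha(t)u(t_0)-i\,\eta_\delta(t)\int_0^t S_\alpha(t-t')\big((V*|v|^2)v\big)(t')\,dt'.
\]
The standard linear estimates give
\[
\|\eta S_\alpha(t)f\|_{X^{\sigma,\frac12+}}\lesssim\|f\|_{H^\sigma},\qquad
\Big\|\eta_\delta\int_0^tS_\alpha(t-t')F\,dt'\Big\|_{X^{\sigma,\frac12+}}\lesssim\delta^{0+}\|F\|_{X^{\sigma,-\frac12++}}.
\]
The small positive power of $\delta$ comes from the gap between $b'=-\frac12++$ and $-\frac12+$. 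Once a fixed point exists, uniqueness of the solution of (\ref{equation_fractional_hartree}) gives (\ref{46.d.alpha}).

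For (\ref{47.d.alpha}) we need the trilinear estimate
\[
\|(V*(v_1\overline{v_2}))v_3\|_{X^{\alpha,-\frac12++}}\lesssim\prod_j\|v_j\|_{X^{\alpha,\frac12+}}.
\]
By duality against $w\in X^{-\alpha,\frac12--}$, we place the derivative $\langle\nabla\rangle^\alpha$ on the highest-frequency factor. Using $\|V*f\|_{L^p}\le\|V\|_{L^1}\|f\|_{L^p}$, we apply H\"older with the $L^{p}_{t,x}$ Strichartz bounds from Corollary \ref{Strichartz_Xsb_embedding} and the interpolated bounds such as (\ref{II_term_Y_bounds_plus_minus}), or alternatively the bilinear estimate of Corollary \ref{Proposition_2.3.d.alpha}. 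The regularity needed is subcritical precisely because $\alpha>\frac d4$, i.e.\ $\alpha>s_c$. For example, for $d=3$ we use $L^{10/3}$-type norms on all four factors, each costing $\frac35(1-\alpha)+$ derivatives, and this is admissible since $\alpha>\frac34$. Contraction on a ball of radius $2C\|u(t_0)\|_{H^\alpha}$ then holds for $\delta^{0+}C\|u(t_0)\|_{H^\alpha}^2\ll1$. Since $\|u(t_0)\|_{H^\alpha}$ is bounded by mass and energy (defocusing, $V\ge0$), $\delta$ and $C$ depend only on $E(u_0)$, $M(u_0)$ and continuously so.

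For (\ref{48.d.alpha}) we rerun the iteration in the norm $\|\D v\|_{X^{0,\frac12+}}$, keeping $\delta$ fixed. The key estimate is a Leibniz-type bound
\[
\|\D((V*(v_1\overline{v_2}))v_3)\|_{X^{0,-\frac12++}}\lesssim\sum_{j}\|\D v_j\|_{X^{0,\frac12+}}\prod_{k\ne j}\|v_k\|_{X^{\alpha,\frac12+}}.
\]
Because $\theta$ is radial, increasing and satisfies $\theta(\xi_1+\xi_2+\xi_3)\lesssim\max_j\theta(\xi_j)$ (it is essentially $\langle\xi\rangle^s$ up to normalisation, uniformly in $N$), we can move $\theta$ onto the largest frequency, just as in Lemma \ref{Kato_Ponce_for_D}. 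The remaining trilinear estimate then has zero derivatives on one factor and $\alpha$ derivatives on the other two, and it is proved with the same Strichartz/H\"older combination as in the first step. The resulting inequality is linear in $\D v$ with coefficient $\delta^{0+}C\|v\|^2_{X^{\alpha,\frac12+}}\le\frac12$ (after shrinking $\delta$ in a way depending only on mass and energy). Hence $\|\D v\|_{X^{0,\frac12+}}\le 2C\|\D u(t_0)\|_{L^2}$, where we also use $\|\D\,\eta S_\alpha u(t_0)\|\lesssim\|\D u(t_0)\|_{L^2}$. Here $s>\frac d2$ ensures $\D u(t_0)\in L^2$ is finite, so the iteration closes.

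The main obstacle is the trilinear estimate at the energy level in the case $d=3$, where $\alpha$ is close to $\frac34$. There the Strichartz exponents must be balanced carefully, since the fractional dispersion gives fewer derivatives of gain than the Laplacian. If pure H\"older/Strichartz fails in some frequency interaction, we would fall back on the bilinear refinement of Corollary \ref{Proposition_2.3.d.alpha}, pairing the highest-frequency factor with a low-frequency one to gain the factor $N_1^{-(\alpha-\frac12)}$.
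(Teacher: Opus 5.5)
Your overall architecture matches the paper: a contraction in $X^{\alpha,\frac12+}$ with a $\delta^{0+}$ gain from the Duhamel term, followed by an estimate linear in $\D v$ at fixed $\delta$. The gap is the trilinear estimate, which carries the whole proof. You do not establish it, and the route you indicate for it fails.

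\textbf{Why your route fails.} Four $L^{10/3}_{t,x}$ factors do not satisfy H\"older, since $4\cdot\frac{3}{10}=\frac65\neq 1$. More fundamentally, for $\alpha<1$ the only loss-free Strichartz-type norms for $S_\alpha$ are of the form $L^q_tL^2_x$. Every sharp admissible pair with $r>2$ costs $\frac{2(1-\alpha)}{q}>0$ derivatives.

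Consider the interaction in which $v_1$ and the dual function $w$ sit at frequency $N$ while $v_2,v_3$ sit at frequency $O(1)$. The output weight $\langle\xi_0\rangle^\alpha$ uses up all the derivatives of $v_1$, and nothing can be borrowed from the low factors. So placing $\langle\nabla\rangle^\alpha v_1$ or $w$ in $L^{10/3}$, $L^4$, or any other lossy space-time norm leaves an uncompensated positive power of $N$.

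Your fallback to Corollary \ref{Proposition_2.3.d.alpha} does not apply directly either. It requires both factors in $X^{0,\frac12+}$, while $w$ is only in $X^{0,\frac12-}$. You would need an interpolated bilinear bound plus a full frequency case analysis, and neither is supplied.

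\textbf{The missing idea} is a choice of exponents that keeps both high-frequency factors in loss-free norms. The paper's proof combines three ingredients:
\begin{itemize}
\item the dual Sobolev embedding in time, $L^{\rho'}_tH^\alpha_x\hookrightarrow X^{\alpha,-b'}$ with $\frac{1}{\rho'}=\frac12+b'$, so $\rho'\to1^+$;
\item the Kato--Ponce inequality in space, $\|\mathcal N(v)\|_{H^\alpha_x}\lesssim\|V\|_{L^1}\|v\|_{L^\infty_x}^2\|v\|_{H^\alpha_x}$;
\item H\"older in time with $\|v\|_{L^p_tL^\infty_x}\lesssim\|v\|_{X^{\alpha,\frac12+}}$ for $p=2\rho'\to2^+$.
\end{itemize}
The last bound holds for $\alpha>\frac d2-\frac{2\alpha}{p}$, which for $p$ near $2$ is exactly the hypothesis $\alpha>\frac d4$. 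Together these give $\|\mathcal N(v)\|_{X^{\alpha,-b'}}\lesssim\|V\|_{L^1}\|v\|_{X^{\alpha,b}}^3$ with no frequency decomposition at all.

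In your dual language, this amounts to the following exponent choice:
\begin{itemize}
\item the high input goes in $L^\infty_tL^2_x$;
\item $w$ goes in $L^{\rho}_tL^2_x$ with $\rho=\frac{2}{1-2b'}$ large, which $X^{0,\frac12-}$ does provide;
\item the two remaining inputs go in $L^{2+}_tL^\infty_x$, at a cost of $\frac d2-\alpha+$ derivatives each.
\end{itemize}
The same argument with Lemma \ref{Kato_Ponce_for_D} gives your Leibniz bound $\|\D\mathcal N(v)\|_{X^{0,-b'}}\lesssim\|V\|_{L^1}\|v\|_{X^{\alpha,b}}^2\|\D v\|_{X^{0,b}}$.

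\textbf{A smaller point.} The condition $s>\frac d2$ is not what makes $\|\D u(t_0)\|_{L^2}$ finite. It is needed to identify $v$ with $u$: uniqueness is applied in $C_tH^s_x$ via $H^s\hookrightarrow L^\infty$ and Gr\"onwall. So you should record that your second iteration places $v$ in $X^{s,\frac12+}$.
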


We present the proof of Proposition \ref{Proposition_3.1_d.alpha} in Section \ref{Appendix}.

As in \cite{Sohinger_S1, Sohinger_Hartree_2D}, we may assume that $u_0$ is smooth, as the following approximation lemma suggests.

\begin{lemma} \label{Proposition_3.2.d.alpha}
If $u$ satisfies
\[
\begin{cases}
i u_t - (-\Delta)^\alpha u = (V \ast |u|^2) u, \\
u(x, 0) = u_0(x),
\end{cases}
\]
and if the sequence $(u^{(n)})$ satisfies
\[
\begin{cases}
i u^{(n)}_t - (-\Delta)^\alpha u^{(n)} = (V \ast |u^{(n)}|^2) u^{(n)}, \\
u^{(n)}(x, 0) = u_0^{(n)}(x),
\end{cases} 
\]
where $u_0^{(n)} \in H^\infty(\R^d)$ and $u_0^{(n)} \xrightarrow{H^s} u_0$, then, for all $t$, 
\[
u^{(n)}(t) \xrightarrow{H^s} u(t).
\]
\end{lemma}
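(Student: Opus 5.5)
The plan is to prove continuous dependence on the data in $H^s$ on a fixed short time interval, and then to iterate this over any compact time interval using the uniform a priori control furnished by the conservation laws.

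\emph{Step 1: uniform low-norm control.} Since $u_0^{(n)} \to u_0$ in $H^s$ and $s>\frac d2\geq \alpha$, we also have $u_0^{(n)}\to u_0$ in $H^\alpha$. Hence $M(u_0^{(n)})\to M(u_0)$ and $E(u_0^{(n)})\to E(u_0)$; for the potential term this uses $V\in L^1$ together with Young's and Sobolev's inequalities. By conservation and the defocusing assumption $V\ge 0$, we get $\sup_n\sup_t\|u^{(n)}(t)\|_{H^\alpha}\le A$ for some $A$ depending only on $u_0$.

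\emph{Step 2: local Lipschitz dependence in $H^s$.} Fix $t$, and without loss of generality take $t>0$. Since $s>\frac d2$, $H^s$ is an algebra and $\|(V*|w|^2)w\|_{H^s}\lesssim \|V\|_{L^1}\|w\|_{L^\infty}^2\|w\|_{H^s}$. This follows from the Kato--Ponce inequality (Proposition \ref{KP_extension_Hs_Linfty}), using that convolution with $V\in L^1$ is bounded on $H^s$ and on $L^\infty$. The solutions $u,u^{(n)}$ are continuous in time with values in $H^s$ on $[0,t]$, so $R:=\sup_{[0,t]}\|u\|_{H^s}<\infty$.

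Write the Duhamel formulas for $u$ and $u^{(n)}$ and subtract. The difference of the nonlinearities is a sum of trilinear terms, each containing one factor of $u-u^{(n)}$. Because $S_\alpha(t)$ is unitary on $H^s$, and using the algebra property, we obtain
\[
\|u(\tau)-u^{(n)}(\tau)\|_{H^s}\le \|u_0-u_0^{(n)}\|_{H^s}+C\int_0^\tau \big(\|u\|_{H^s}+\|u^{(n)}\|_{H^s}\big)^2\|u-u^{(n)}\|_{H^s}\,d\sigma .
\]

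\emph{Step 3: bootstrap and Gronwall.} Let $T_n\le t$ be the maximal time such that $\|u^{(n)}\|_{H^s}\le 2R+1$ on $[0,T_n]$. On $[0,T_n]$, Gronwall's inequality gives
\[
\sup_{[0,T_n]}\|u-u^{(n)}\|_{H^s}\le e^{C(3R+1)^2t}\|u_0-u_0^{(n)}\|_{H^s}\to 0 .
\]
For $n$ large this supremum is less than $1$, so $\|u^{(n)}\|_{H^s}\le R+1$ on $[0,T_n]$. Local well-posedness in $H^s$ then lets us continue past $T_n$, which forces $T_n=t$. Hence $u^{(n)}(t)\to u(t)$ in $H^s$.

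The main obstacle is Step 3: making sure the $u^{(n)}$ exist on all of $[0,t]$ with uniformly bounded $H^s$ norm, rather than just on a short interval whose length might shrink with $n$. The bootstrap handles this, since continuity of $\tau\mapsto\|u^{(n)}(\tau)\|_{H^s}$ makes $T_n$ well defined. If one prefers to work in $X^{s,b}$ spaces, the same argument can be run on intervals of length $\delta$ given by Proposition \ref{Proposition_3.1_d.alpha}, uniform in $n$ by Step 1. One then iterates finitely many times, with $\delta$ depending only on $A$, so that $\lceil t/\delta\rceil$ steps are needed independently of $n$.
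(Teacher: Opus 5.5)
Your proof is correct, but its main line differs from the paper's. The paper only says the lemma follows from Proposition \ref{Proposition_3.1_d.alpha}, as in \cite[Proposition 3.3]{Sohinger_S1}. That argument works in $X^{s,b}$ on intervals of length $\delta=\delta(s,E,M)$, which is uniform in $n$ because $M(u_0^{(n)})\to M(u_0)$ and $E(u_0^{(n)})\to E(u_0)$. It bounds $u^{(n)}-u$ on each such interval via Duhamel and the nonlinear estimates of that proposition, then iterates $\lceil t/\delta\rceil$ times; this is essentially the alternative you sketch at the end.

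Your principal route stays in $C_tH^s$. The algebra property of $H^s$ for $s>\frac d2$ and the unitarity of $S_\alpha(t)$ on $H^s$ turn the Duhamel formula into a Gronwall inequality. The continuity argument then supplies the uniform-in-$n$ bound $\|u^{(n)}\|_{H^s}\le 2R+1$ on $[0,t]$ that the Gronwall constant requires.

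What this buys you is a more elementary proof. It needs neither $X^{s,b}$ spaces nor uniform control from the conservation laws; in fact your Step 1 is never used in this route. Also, since the $u^{(n)}$ are already global in $C_tH^s$, closing the bootstrap only needs continuity of $\tau\mapsto\|u^{(n)}(\tau)\|_{H^s}$, not a further appeal to local well-posedness.

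What the paper's route buys is reuse of the machinery of Proposition \ref{Proposition_3.1_d.alpha} and a step size controlled by mass and energy alone. It also controls the $L^\infty_x$ factors through the Strichartz bound (\ref{Corollary_for_Y}) rather than through the algebra property. That makes it better adapted to a possible relaxation of $s>\frac d2$, as suggested in the paper's closing remark, whereas your Gronwall estimate uses the algebra property of $H^s$ in an essential way.
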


The proof of Lemma \ref{Proposition_3.2.d.alpha} follows from Proposition \ref{Proposition_3.1_d.alpha} and is similar to the proof of  \cite[Proposition 3.3]{Sohinger_S1}; we omit the details.

In order to argue by density, we need to show that the mass and energy are continuous on $H^s(\R^d)$. The former is clearly continuous on $H^\alpha(\R^d)$. For the latter, observe that by H\"{o}lder's and Young's inequalities, as well as a standard Sobolev embedding, we see that
\begin{align*}
    \abs{\int \big(V*(u_1u_2)\big)\,u_3\,u_4 d x } & \leq \| V * (u_1 u_2) \|_{L^2} \; \|u_3 u_4 \|_{L^2} \leq \|V\|_{L^1}\ \|u_1u_2\|_{L^2}   \; \|u_3 u_4 \|_{L^2} \\[6pt]
    & \leq \|V\|_{L^1}\ \|u_1\|_{L^4}\|u_2\|_{L^4}  \|u_3\|_{L^4}\|u_4\|_{L^4} \\[6pt]
    & \leq \|V\|_{L^1} \|u_1\|_{H^{\frac{d}{4}}}  \|u_2\|_{H^{\frac{d}{4}}} \|u_3\|_{H^{\frac{d}{4}}} \|u_4\|_{H^{\frac{d}{4}}}  \\[6pt]
    & \leq \|V\|_{L^1} \|u_1\|_{H^{\alpha}}  \|u_2\|_{H^{\alpha}} \|u_3\|_{H^{\alpha}} \|u_4\|_{H^{\alpha}}, 
\end{align*}
for $\alpha \geq \frac{d}{4}$.

Therefore, the energy is continuous on $H^\alpha$ and thus on $H^s$, since $s\geq \alpha$. Thus, if  $u_0^{(n)} \xrightarrow{H^s} u_0$ we deduce that
\begin{equation}\label{mass_energy_data_approximation}
     M(u_0^{(n)}) \ra M(u_0), \qquad E(u_0^{(n)}) \ra E(u_0) ,\qquad  \| u_0^{(n)}\|_{H^s} \ra \|u_0 \|_{H^s(\R^d) }.
\end{equation}

If Theorem \ref{Main_Theorem} were true for smooth solutions, it would then follow that, for all $t \in \R$, 
\[
\|u^{(n)}(t)\|_{H^s} \leq C\big(s, E(u_0^{(n)}), M(u_0^{(n)})\big) \; (1+|t|)^{\frac{1}{\kappa} s+} \|u_0^{(n)}\|_{H^s},
\]
where $\kappa = \frac{3}{2}(2\alpha - 1)$ for $d=2$, and $\kappa = (4\alpha-3)$ for $d=3$, as in Theorem \ref{Main_Theorem}. The general claim for non-smooth functions would then follow by an approximation argument using  (\ref{mass_energy_data_approximation}). We therefore assume from now on that $u_0 \in H^\infty(\R^d)$.

\subsection{A higher modified energy} \label{higher_modified_energy} 

As mentioned in the introduction, we use the method of \textit{higher modified energies}. We start by defining
\[
E_1(u(t)) := \|\D u(t)\|^2_{L^2}.
\]
A direct computation shows that
\[
\frac{d}{dt} E_1(u(t)) = ic \int_{\Gamma_4} \big[\theta^2(\xi_1) - \theta^2(\xi_2) + \theta^2(\xi_3) - \theta^2(\xi_4)\big] \, \widehat{V}(\xi_3 + \xi_4) \, \widehat{u}(\xi_1) \widehat{\overline u}(\xi_2) \widehat{u}(\xi_3)\widehat{\overline u}(\xi_4) \; d\Gamma_4(\xi),
\]
for some $c \in \C$. 

We then consider the \textit{higher modified energy}
\[
E_2(u) := E_1(u) + \lambda_4(M_4; u),
\]
 where $\lambda_4(M_4;u)$ is defined in (\ref{lambda_M_f}), and  $M_4 = M_4(\xi_1, \xi_2, \xi_3, \xi_4)$ is to be defined shortly (see (\ref{M4_definition}) below). Using (\ref{equation_fractional_hartree}) and (\ref{lambda_M_f}), a straightforward computation yields
 \begin{align*}
 \frac{d}{dt} \lambda_4(M_4; u) & = -i\lambda_4(M_4(\xi_1,\xi_2,\xi_3,\xi_4)\, (|\xi_1|^{2\alpha} - |\xi_2|^{2\alpha} + |\xi_3|^{2\alpha} - |\xi_4|^{2\alpha}); u)  \\
 & -i \int_{\Gamma_6} M_6(\xi_1, \xi_2, \xi_3, \xi_4, \xi_5, \xi_6)\, \widehat{u}(\xi_1) \widehat{\overline u}(\xi_2) \widehat{u}(\xi_3) \widehat{\overline u}(\xi_4) \widehat{u}(\xi_5)\widehat{\overline u}(\xi_6) \; d\Gamma_6(\xi),  
 \end{align*}
where $M_6$ is defined as
\begin{align} 
M_6(\xi_1, \xi_2, \xi_3, \xi_4, \xi_5, \xi_6) := & M_4(\xi_{123}, \xi_4, \xi_5, \xi_6)\widehat{V}(\xi_1 + \xi_2) - M_4(\xi_1, \xi_{234}, \xi_5, \xi_6)\widehat{V}(\xi_2 + \xi_3)  \notag \\ & + M_4(\xi_1, \xi_2, \xi_{345}, \xi_6)\widehat{V}(\xi_3 + \xi_4) - M_4(\xi_1, \xi_2, \xi_3, \xi_{456})\widehat{V}(\xi_4 + \xi_5), \label{M6_definition}
\end{align}
and $\xi_{ijk}$ is defined in (\ref{xi_ijpk}). Therefore,
\begin{align}
     & \frac{d}{dt} E_2(u)   =   \frac{d}{dt} E_1(u) + \frac{d}{dt} \lambda_4(M_4; u)  \notag\\[8pt]
    & =   ic \int_{\Gamma_4}  \big[ \big(\theta^2(\xi_1) - \theta^2(\xi_2) + \theta^2(\xi_3) - \theta^2(\xi_4) \big) \widehat{V}(\xi_3 + \xi_4) -  M_4 \; (|\xi_1|^{2\alpha} - |\xi_2|^{2\alpha} + |\xi_3|^{2\alpha} - |\xi_4|^{2\alpha}) \big]  \label{theta_M4_cancellation} \\[6pt]
    & \hspace{3cm}\widehat{u}(\xi_1) \widehat{\overline{u}}(\xi_2) \widehat{u}(\xi_3) \widehat{\overline{u}}(\xi_4) \; d\Gamma_4(\xi) \notag\\[8pt]
    & -  i\int_{\Gamma_6}   M_6(\xi_1,\xi_2,\xi_3,\xi_4,\xi_5,\xi_6) \; \widehat{u}(\xi_1) \widehat{\overline{u}}(\xi_2) \widehat{u}(\xi_3) \widehat{\overline{u}}(\xi_4) \widehat{u}(\xi_5) \widehat{\overline{u}}(\xi_6) \; d\Gamma_6(\xi).\notag
\end{align}
We wish to define $M_4$ appropriately, so that the quantity in the square brackets in (\ref{theta_M4_cancellation}) vanishes. A natural first attempt is to define $M_4$ by
\[
M_4 = c \frac{\theta^2(\xi_1)-\theta^2(\xi_2)+\theta^2(\xi_3)-\theta^2(\xi_4)}{|\xi_1|^{2\alpha}-|\xi_2|^{2\alpha}+|\xi_3|^{2\alpha}-|\xi_4|^{2\alpha}} \widehat{V}(\xi_3 + \xi_4), \qquad \text{provided } \quad |\xi_1|^{2\alpha}-|\xi_2|^{2\alpha}+|\xi_3|^{2\alpha}-|\xi_4|^{2\alpha} \neq 0. 
\]

However, in general, $\theta^2(\xi_1) - \theta^2(\xi_2) + \theta^2(\xi_3) - \theta^2(\xi_4)$ is not necessarily zero when $|\xi_1|^{2\alpha} - |\xi_2|^{2\alpha} + |\xi_3|^{2\alpha} - |\xi_4|^{2\alpha}$ is zero, thus there is no full cancellation in (\ref{theta_M4_cancellation}). Moreover, we do not have an appropriate way to bound the denominator of $M_4$ on the whole $\Gamma_4$, as we will see below (see Lemma \ref{Sufficient_and_necessary_nonperiodic}).

To resolve this, we argue as in \cite[pages 13-14]{Sohinger_Hartree_2D}. More precisely, we decompose 
\[
\Gamma_4 = \Omega_{nr} \cup \Omega_r,
\]
where $\Omega_{nr}$ is the set of nonresonant frequencies defined by
\[
\Omega_{nr} := \{(\xi_1, \xi_2, \xi_3, \xi_4) \in \Gamma_4: \; \xi_{12}, \xi_{14} \neq 0, |\cos\angle(\xi_{12}, \xi_{14})| > \gamma\},
\]
with $\gamma \in (0,1)$ to be chosen soon (see (\ref{angle_constraint}) below), and $\Omega_r := \Gamma_4 \setminus \Omega_{nr}$ is the set of resonant frequencies. Then, we set
\begin{equation} \label{M4_definition}
M_4(\xi_1, \xi_2, \xi_3, \xi_4) :=
\begin{cases}
    \displaystyle c \frac{\theta^2(\xi_1)-\theta^2(\xi_2)+\theta^2(\xi_3)-\theta^2(\xi_4)}{|\xi_1|^{2\alpha}-|\xi_2|^{2\alpha}+|\xi_3|^{2\alpha}-|\xi_4|^{2\alpha}} \widehat{V}(\xi_3 + \xi_4) & \text{if } (\xi_1, \xi_2, \xi_3, \xi_4) \in \Omega_{nr}, \\[4pt]
0 & \text{if } (\xi_1, \xi_2, \xi_3, \xi_4) \in \Omega_r.
\end{cases}
\end{equation}

By the definition of $M_4$, we observe that
\begin{align} 
 \frac{d}{dt} E_2(u) 
= & \int_{\Omega_{r}} \big[\theta^2(\xi_1)-\theta^2(\xi_2)+\theta^2(\xi_3)-\theta^2(\xi_4) \big] \; \widehat{V}(\xi_3+\xi_4)  \; \widehat{u}(\xi_1) \widehat{\overline u}(\xi_2) \widehat{u}(\xi_3) \widehat{\overline u}(\xi_4) \; d\Gamma_4(\xi)  \notag \\[8pt]
 & + \int_{\Gamma_6} M_6(\xi_1, \xi_2, \xi_3, \xi_4, \xi_5, \xi_6) \widehat{u}(\xi_1) \widehat{\overline u}(\xi_2) \widehat{u}(\xi_3) \widehat{\overline u}(\xi_4) \widehat{ u}(\xi_5) \widehat{\overline u}(\xi_6) \; d\Gamma_6(\xi). \label{E2_derivative}
\end{align}

Finally, to control $M_4$ on $\Omega_{nr}$, we use the following result.

\begin{lemma}[Resonance inequality] \label{Sufficient_and_necessary_nonperiodic}
Let $d\geq 2$, $\alpha\in\big(\tfrac12,1\big]$, and  $\gamma\in(0,1)$. Then
\begin{equation}  \label{angle_constraint}
 \gamma>\frac{1-\alpha}{\alpha}   
\end{equation} 
if and only if
\begin{empheq}[left=\empheqlbrace]{align}
 & \displaystyle \big|\,|\xi+\eta|^{2\alpha}-|\xi+\lambda+\eta|^{2\alpha}+|\xi+\lambda|^{2\alpha}-|\xi|^{2\alpha}\,\big| \gtrsim_{\alpha, \gamma}
\frac{|\eta|\,|\lambda|}{\big(|\xi|+|\eta|+|\lambda|\big)^{\,2-2\alpha}}, \label{important_inequality} \\[6pt]
& \text{ for all }  \xi,\eta,\lambda\in\R^d   \text{ with }   |\eta\cdot\lambda| \geq \gamma\,|\eta|\,|\lambda|. \label{vector_angle_constraint}
\end{empheq}
Moreover, the implicit constant in (\ref{important_inequality}) can be taken as 
\[
C(\alpha,\gamma) = 2\alpha (\alpha \gamma + \alpha - 1).
\]
\end{lemma}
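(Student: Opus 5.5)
The plan is to write the left-hand side of (\ref{important_inequality}) as an exact double integral of the Hessian of $f(x):=|x|^{2\alpha}$, as in the second-order mean value argument behind Proposition \ref{Double_MVT_Proposition}, and then bound the resulting bilinear form from below pointwise. Since $f(\xi+\eta)-f(\xi+\lambda+\eta)+f(\xi+\lambda)-f(\xi)=-\big[f(\xi+\eta+\lambda)-f(\xi+\eta)-f(\xi+\lambda)+f(\xi)\big]$, we have
\begin{equation*}
\big|\,|\xi+\eta|^{2\alpha}-|\xi+\lambda+\eta|^{2\alpha}+|\xi+\lambda|^{2\alpha}-|\xi|^{2\alpha}\big| = \Big|\int_0^1\!\!\int_0^1 \eta^{T}\,\nabla^2 f(\xi+s\eta+t\lambda)\,\lambda \; ds\,dt\Big|.
\end{equation*}
A direct computation gives $\nabla^2 f(x)=2\alpha|x|^{2\alpha-2}\big(I-2(1-\alpha)\hat x\hat x^{T}\big)$ with $\hat x=x/|x|$. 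Hence
\begin{equation*}
\eta^T\nabla^2 f(x)\lambda=2\alpha|x|^{2\alpha-2}\,B_{\hat x}(\eta,\lambda), \qquad B_{\hat x}(\eta,\lambda):=\eta\cdot\lambda-2(1-\alpha)(\eta\cdot\hat x)(\lambda\cdot\hat x).
\end{equation*}
The singularity at $x=0$ is integrable: the Hessian is $O(|x|^{2\alpha-2})$ with $2\alpha-2>-1$, and the segment family meets the origin on a null set. Alternatively, one can first treat $x$ avoiding $0$ and then pass to the limit by continuity of both sides in $(\xi,\eta,\lambda)$.

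For sufficiency, normalize $|\eta|=|\lambda|=1$ and write $\cos\varphi=\eta\cdot\lambda$. By replacing $\lambda$ with $-\lambda$ if necessary, we may assume $\cos\varphi\geq\gamma$; this flips only the overall sign of the bilinear form, so it is harmless. The key elementary fact is that for unit vectors $\eta,\lambda,\hat x$, the product $(\eta\cdot\hat x)(\lambda\cdot\hat x)$ lies in $[\frac{\cos\varphi-1}{2},\frac{\cos\varphi+1}{2}]$. To see this, write $\hat x$ in the orthonormal basis of the bisectors $(\eta\pm\lambda)/|\eta\pm\lambda|$; the product becomes a difference of squares weighted by $\frac{1\pm\cos\varphi}{2}$. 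Therefore
\begin{equation*}
B_{\hat x}(\eta,\lambda)\geq \cos\varphi-(1-\alpha)(1+\cos\varphi)=\alpha\cos\varphi-(1-\alpha)\geq \alpha\gamma+\alpha-1>0,
\end{equation*}
and the last inequality is exactly (\ref{angle_constraint}). The crucial point is that this lower bound has a fixed sign uniformly in $\hat x$, so there is no cancellation in the double integral. Restoring $|\eta||\lambda|$ gives
\begin{equation*}
\text{LHS}\geq 2\alpha(\alpha\gamma+\alpha-1)\,|\eta||\lambda|\int_0^1\!\!\int_0^1|\xi+s\eta+t\lambda|^{2\alpha-2}\,ds\,dt.
\end{equation*}
Since $2\alpha-2\leq 0$ and $|\xi+s\eta+t\lambda|\leq|\xi|+|\eta|+|\lambda|$, each integrand is at least $(|\xi|+|\eta|+|\lambda|)^{2\alpha-2}$. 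This yields (\ref{important_inequality}) with constant $C(\alpha,\gamma)=2\alpha(\alpha\gamma+\alpha-1)$.

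For necessity, suppose $\gamma\leq\frac{1-\alpha}{\alpha}$. Choose unit vectors $\eta_0,\lambda_0$ with $\eta_0\cdot\lambda_0=\frac{1-\alpha}{\alpha}$; this is admissible in (\ref{vector_angle_constraint}) because $\frac{1-\alpha}{\alpha}\geq\gamma$, and it lies in $[0,1)$ since $\alpha>\frac12$. Take $\hat x_0$ to be the unit bisector of $\eta_0,\lambda_0$, so that $B_{\hat x_0}(\eta_0,\lambda_0)=\alpha\cos\varphi-(1-\alpha)=0$. Now set $\xi=R\hat x_0$, $\eta=\epsilon\eta_0$, $\lambda=\epsilon\lambda_0$ with $\epsilon\ll R$. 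The right-hand side of (\ref{important_inequality}) is then $\sim\epsilon^2R^{2\alpha-2}$. For the left-hand side, Taylor expansion gives the leading term $\epsilon^2\cdot 2\alpha R^{2\alpha-2}B_{\hat x_0}(\eta_0,\lambda_0)=0$. The error is bounded by $\epsilon^3\sup|\nabla^3 f|\lesssim\epsilon^3R^{2\alpha-3}$ on the region $|x|\sim R$, which is $o(\epsilon^2R^{2\alpha-2})$ as $\epsilon/R\to0$. This contradicts (\ref{important_inequality}) for any fixed implicit constant.

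I expect the main obstacle to be the elementary but essential range bound on $(\eta\cdot\hat x)(\lambda\cdot\hat x)$, together with the observation that the bound is sign-uniform in $\hat x$; both directions of the lemma rest on it. The remaining steps are the routine third-order Taylor remainder estimate in the necessity part and the justification of the integral representation across $x=0$.
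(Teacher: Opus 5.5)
Your proposal is correct and follows essentially the paper's proof. The ingredients match: the same double mean value representation through the Hessian of $|x|^{2\alpha}$, justified (as in Proposition~\ref{double_MVT_with_singularity}) by integrability of $|x|^{2\alpha-2}$ along lines; the same bound $\sup_{|u|=1}|(\eta\cdot u)(\lambda\cdot u)|=\frac{1+\gamma_0}{2}|\eta||\lambda|$ (your bisector computation), which yields the constant $2\alpha(\alpha\gamma+\alpha-1)$; and, for necessity, the same device of sending $|\xi|/|\eta|\to\infty$ along a direction in which the limiting Hessian form vanishes on $(\eta,\lambda)$. In the necessity part you take the threshold angle with the bisector direction instead of solving $G(\nu)=\gamma_0/(2-p)$, and you quantify the error with a third-derivative bound; these are minor variations.

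One caution: your ``alternative'' justification of the integral formula, by continuity from configurations avoiding the origin, does not work as stated in $d=2$. There the configurations whose parallelogram $\{\xi+s\eta+t\lambda\}$ contains $0$ form a set with nonempty interior, so the remaining configurations are not dense. Rely on the line-integrability argument instead.
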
 
The proof of Lemma \ref{Sufficient_and_necessary_nonperiodic} is presented in Section \ref{Appendix}.

Note that (\ref{important_inequality}) is equivalent to 
\begin{equation} \label{important_inequality_equivalent_main_text}
\left| |\xi_1|^{2\alpha} - |\xi_2|^{2\alpha} + |\xi_3|^{2\alpha} - |\xi_{123}|^{2\alpha} \right| \gtrsim_{\alpha,\gamma} \frac{|\xi_1 + \xi_2|\; |\xi_2 + \xi_3|}{(|\xi_1| + |\xi_2| + |\xi_3|)^{2-2\alpha}}.  
\end{equation}
 Thus, as (\ref{angle_constraint}) suggests, we need to take
\[
\frac{1-\alpha}{\alpha} < \gamma < 1. 
\]
This will allow us to use (\ref{important_inequality}) and thus (\ref{important_inequality_equivalent_main_text}) to bound the denominator of $M_4$ on $\Omega_{nr}$
appropriately.

\subsection{A priori bounds for \texorpdfstring{$M_4$}{} and \texorpdfstring{$M_6$}{}}

In this section, we prove some estimates for the multipliers $M_4$ and $M_6$, defined in (\ref{M4_definition}) and (\ref{M6_definition}) respectively. First, given $(\xi_1, \xi_2, \xi_3, \xi_4) \in \Gamma_4$, we dyadically localise the frequencies as $\langle \xi_j \rangle \sim N_j$ where $N_j \in 2^\N$, for $1\leq j \leq 4$, and order the $N_j$'s such that $N^*_1 \geq N^*_2 \geq N^*_3 \geq N^*_4$. As in \cite{CKSTT_Resonant, Sohinger_S1, Sohinger_R,Sohinger_Hartree_2D}, we abuse notation and write $\theta(N^*_j)$ instead of $\theta(N^*_j, \vec{0})$. \\

First, we have the following bounds for $M_4$.

\begin{lemma} \label{Lemma_3.3}
With the dyadic localisation notation $ \langle \xi_j \rangle \sim N_j$ and the ordering
$N^*_1\geq N^*_2\geq N^*_3\geq N^*_4$, the multiplier $M_4$ satisfies
\begin{equation*}
|M_4| \lesssim_{\alpha,\gamma} \frac{1}{(N^*_1)^{2\alpha}}\,\theta(N^*_1)\,\theta(N^*_2).
\label{equation_M4_bound}
\end{equation*}
\end{lemma}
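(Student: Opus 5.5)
The plan is to bound numerator and denominator of $M_4$ separately on $\Omega_{nr}$; on $\Omega_r$ we have $M_4=0$ and there is nothing to prove. First, $|\widehat V|\le \|V\|_{L^1}$, so the factor $\widehat V(\xi_3+\xi_4)$ is harmless. Second, on $\Gamma_4$ the two largest frequencies are comparable, $N_1^*\sim N_2^*$. This gives $\theta(N_1^*)\theta(N_2^*)\sim\theta^2(N_1^*)$, so it suffices to show
\[
|M_4|\lesssim_{\alpha,\gamma} \frac{\theta^2(N_1^*)}{(N_1^*)^{2\alpha}}.
\]

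For the parametrisation, I use that $\theta$ is radial, so $f:=\theta^2$ is even. I set $\xi:=-\xi_2$, $\eta:=\xi_{12}$, $\lambda:=\xi_{23}$. Then $\xi+\eta=\xi_1$, $\xi+\lambda=\xi_3$ and $\xi+\eta+\lambda=-\xi_4$. The numerator becomes $f(\xi+\eta)-f(\xi)+f(\xi+\lambda)-f(\xi+\eta+\lambda)$, and the denominator is exactly the left side of (\ref{important_inequality}). On $\Omega_{nr}$ we have $|\cos\angle(\xi_{12},\xi_{14})|>\gamma$ with $\xi_{14}=-\xi_{23}=-\lambda$, so the angle condition (\ref{vector_angle_constraint}) holds for $(\eta,\lambda)$. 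By Lemma \ref{Sufficient_and_necessary_nonperiodic}, with $\gamma$ as in (\ref{angle_constraint}), and since $|\xi|+|\eta|+|\lambda|\lesssim N_1^*$,
\[
\text{denominator}\gtrsim_{\alpha,\gamma} \frac{|\eta|\,|\lambda|}{(N_1^*)^{2-2\alpha}}.
\]
It therefore remains to show that the numerator is $\lesssim |\eta||\lambda|\,\theta^2(N_1^*)(N_1^*)^{-2}$. Where I can only show something weaker, I will check that it compensates directly.

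The key auxiliary bounds come from (\ref{theta_properties}). For $|x|\lesssim N_1^*$,
\[
|\nabla f(x)|\lesssim \frac{\theta^2(N_1^*)}{N_1^*},\qquad \|\nabla^2 f(x)\|\lesssim \frac{\theta^2(x)}{|x|^2}.
\]
The gradient bound holds because $\nabla f=0$ for $|x|<N$, and for $|x|\ge 2N$ one has $\theta^2(x)/|x|=N^{-2s}|x|^{2s-1}$, which is increasing; the transition region $N\le|x|\le 2N$ is handled by the boundedness of $\theta$ there. I then split into three cases.

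\begin{enumerate}
\item[(a)] $|\eta|,|\lambda|\ll|\xi|$. All four frequencies are $\sim|\xi|\sim N_1^*$. Proposition \ref{Double_MVT_Proposition} gives numerator $\lesssim|\eta||\lambda|\,\theta^2(N_1^*)/(N_1^*)^2$, which is exactly what is needed.
\item[(b)] Both $|\eta|,|\lambda|\gtrsim N_1^*$. The crude bound numerator $\lesssim\theta^2(N_1^*)$, together with denominator $\gtrsim (N_1^*)^2/(N_1^*)^{2-2\alpha}$, already gives the claim.
\item[(c)] Exactly one of $|\eta|,|\lambda|$ is small, say $|\lambda|\ll N_1^*\lesssim|\eta|$; the symmetric case is identical. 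I group the numerator as $[f(\xi+\eta)-f(\xi+\eta+\lambda)]+[f(\xi+\lambda)-f(\xi)]$ and apply the single mean value theorem with the uniform gradient bound. All points on the segments have size $\lesssim N_1^*$, so the numerator is $\lesssim|\lambda|\,\theta^2(N_1^*)/N_1^*$, which suffices since $|\eta|\gtrsim N_1^*$.
\end{enumerate}
The intermediate regime where, say, $|\lambda|\ll|\xi|$ but $|\lambda|\sim N_1^*$ belongs to case (b). Any regime with $|\xi|\ll N_1^*$ forces $|\eta|$ or $|\lambda|$ to be $\gtrsim N_1^*$, so it lies in case (b) or (c).

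The main obstacle is bookkeeping of the case split so that it exhausts all configurations, including the degenerate ones. In particular, the gradient bound must not deteriorate when the mean value point is at small frequency. This is why I use the monotonicity of $\theta^2(x)/|x|$ (true because $s>\frac d2\ge1$) rather than the pointwise bound $\theta^2(x)/|x|$. In case (a) I must also ensure that the point $x$ with $|x|\sim|\xi|$ from Proposition \ref{Double_MVT_Proposition} is comparable to $N_1^*$, which holds there.
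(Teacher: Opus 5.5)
Your proposal is correct and follows essentially the route the paper indicates. The paper omits this proof (deferring to \cite[Lemma 3.3]{Sohinger_Hartree_2D}), but its proof of Lemma \ref{lemma_for_sextilinear_term_B} carries out the same scheme: a lower bound on the denominator from the resonance inequality (\ref{important_inequality_equivalent_main_text}), followed by a split into a crude-bound case, a single mean value theorem case, and a double mean value theorem case. Your explicit use of the monotonicity of $\theta^2(x)/|x|$, to keep the gradient bound uniform along segments passing through low frequencies, is a point the paper's subcase analysis leaves implicit.
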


The proof of Lemma \ref{Lemma_3.3} is based on estimate (\ref{important_inequality_equivalent_main_text}) and is similar to \cite[Lemma 3.3]{Sohinger_Hartree_2D}. We omit it.

Next, we prove an estimate for $M_6$.

\begin{lemma} \label{lemma_for_sextilinear_term_B}
    Let $d\geq 2$, $s> \frac{d}{2}$, $\alpha \in (\frac{1}{2}, 1]$, $(\xi_1, \xi_2, \xi_3,\xi_4,\xi_5,\xi_6) \in \Gamma_6$, and assume that $\langle \xi_j\rangle \sim K_j$, for $1\leq j \leq 6$. Suppose also that we order the $K_j$'s such that $K_1^* \geq\ldots \geq K_6^*$. Then
    \[
    |M_6(\xi_1, \xi_2, \xi_3,\xi_4,\xi_5,\xi_6)|  \lesssim_{\alpha,\gamma}  \frac{1}{(K_1^*)^{2\alpha}} \theta(K_1^*) \theta(K_2^*).
    \]
\end{lemma}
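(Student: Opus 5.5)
The bound follows by reducing each of the four terms in the definition (\ref{M6_definition}) of $M_6$ to Lemma \ref{Lemma_3.3}, and then checking that the dyadic sizes produced by the collapsed frequency $\xi_{ijk}$ only help. By symmetry of the four terms it suffices to treat the first one, $M_4(\xi_{123},\xi_4,\xi_5,\xi_6)\widehat V(\xi_1+\xi_2)$. Since $V\in L^1$, we have $|\widehat V|\leq \|V\|_{L^1}$, so this factor costs only a constant. The tuple $(\xi_{123},\xi_4,\xi_5,\xi_6)$ lies in $\Gamma_4$ because $\xi_1+\dots+\xi_6=0$. Lemma \ref{Lemma_3.3} then gives
\[
|M_4(\xi_{123},\xi_4,\xi_5,\xi_6)|\lesssim_{\alpha,\gamma}\frac{\theta(L_1^*)\,\theta(L_2^*)}{(L_1^*)^{2\alpha}},
\]
where $L_1^*\geq L_2^*$ are the two largest among the dyadic sizes $\langle\xi_{123}\rangle,\langle\xi_4\rangle,\langle\xi_5\rangle,\langle\xi_6\rangle$.

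The second step compares $L_1^*,L_2^*$ with $K_1^*,K_2^*$. Since $\theta$ is radial and essentially increasing (it is $1$ for $|\xi|\leq N$, comparable to $1$ on $N\leq|\xi|\leq 2N$, and $(|\xi|/N)^s$ beyond), we have $\theta(A)\lesssim\theta(B)$ whenever $A\lesssim B$. The function $\theta(x)/x^{2\alpha}$ behaves like $x^{s-2\alpha}$ for large $x$, with $s>\tfrac d2\geq 1\geq \alpha$, hence $s-2\alpha>-1$, but the exponent may have either sign, so we should not use monotonicity of this ratio. Instead I would write $(L_1^*)^{-2\alpha}\theta(L_1^*)\theta(L_2^*)$ and split into cases.

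\emph{Case 1: $L_1^*\sim K_1^*$.} Since $L_2^*\lesssim K_1^*$ trivially, it remains to show $\theta(L_2^*)\lesssim\theta(K_2^*)$. Among the four frequencies $\xi_{123},\xi_4,\xi_5,\xi_6$ the two largest involve at least one of $\xi_4,\xi_5,\xi_6$ directly. If $L_2^*$ corresponds to some $\xi_j$ with $j\in\{4,5,6\}$ and $L_1^*$ to a different one, then both are among the $K_j$, so $L_2^*\leq K_2^*$. If $L_2^*=\langle\xi_{123}\rangle$, then $L_2^*\lesssim \max(K_1,K_2,K_3)$. We need this maximum to be at most of size $K_2^*$ unless the largest $K$ sits among indices $1,2,3$. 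In that subcase $L_1^*$ is some $\xi_j$, $j\geq 4$, with $L_1^*\sim K_1^*$, so two distinct original frequencies are $\gtrsim K_1^*$, and the bound holds with $\theta(K_2^*)\sim\theta(K_1^*)$.

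\emph{Case 2: $L_1^*\ll K_1^*$.} Then the largest original frequency lies in $\{\xi_1,\xi_2,\xi_3\}$ and is cancelled in $\xi_{123}$. Since $\sum_{j}\xi_j=0$ and $|\xi_{123}|=|\xi_4+\xi_5+\xi_6|\lesssim L_1^*\ll K_1^*$, another frequency of size $\sim K_1^*$ must appear among $\xi_1,\xi_2,\xi_3$, so $K_2^*\sim K_1^*$. This is the main obstacle: the bound $(L_1^*)^{-2\alpha}$ is worse than $(K_1^*)^{-2\alpha}$. I would first rework Lemma \ref{Lemma_3.3} to give the sharper numerator from the double mean value theorem, Proposition \ref{Double_MVT_Proposition}, together with (\ref{theta_properties}). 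Combined with the resonance inequality (\ref{important_inequality_equivalent_main_text}), this gives the bound $|M_4|\lesssim \theta(L_1^*)^2/(L_1^*)^{2\alpha}$. Then $\theta(L_1^*)^2 (L_1^*)^{-2\alpha}\lesssim \theta(K_1^*)^2(K_1^*)^{-2\alpha}$ provided $x\mapsto\theta(x)^2x^{-2\alpha}$ is essentially increasing. This holds because $2s-2\alpha>0$ for $|\xi|\geq 2N$, while for smaller $x$ the function $x^{-2\alpha}$ decreases. The latter spoils monotonicity in the low range, so there I would fall back on the bound $|M_4|\lesssim 1$ (numerator $\lesssim 1$ when all frequencies are $\lesssim N$, since $\theta\equiv1$ there makes the numerator vanish or be small) and on the resonance bound. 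This low-frequency regime is where I expect the real work to be. Finally, I would sum the four symmetric terms.
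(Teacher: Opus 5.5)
Your overall route matches the paper's. You isolate $M_4(\xi_{123},\xi_4,\xi_5,\xi_6)\widehat V(\xi_1+\xi_2)$, bound it by $\theta(L_1^*)^2(L_1^*)^{-2\alpha}$, and compare with the $K_j^*$. Applying Lemma \ref{Lemma_3.3} directly to $(\xi_{123},\xi_4,\xi_5,\xi_6)\in\Gamma_4$ is legitimate. The paper instead re-derives that bound via the resonance inequality and the mean value theorems, in terms of the largest of $\xi_4,\xi_5,\xi_6$.

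There is nothing to ``rework'' in Lemma \ref{Lemma_3.3}: on $\Gamma_4$ one always has $L_1^*\sim L_2^*$, so it already gives $\theta(L_1^*)^2(L_1^*)^{-2\alpha}$. Similarly, $K_1^*\sim K_2^*$ holds automatically on $\Gamma_6$, which makes your Case 1 immediate.

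The genuine gap is the last step of Case 2. There you need $\theta(L_1^*)^2(L_1^*)^{-2\alpha}\lesssim\theta(K_1^*)^2(K_1^*)^{-2\alpha}$ with $L_1^*\ll K_1^*$. You correctly note that monotonicity fails below $N$, but you then leave that regime open. The fallback you propose, $|M_4|\lesssim 1$, cannot work: the target bound is only of size $N^{-2\alpha}$ when $K_1^*\sim N$.

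The missing observation is elementary and splits into three ranges.

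If $L_1^*\ll N$, then $|\xi_{123}|,|\xi_4|,|\xi_5|,|\xi_6|<N$. So $\theta\equiv 1$ at all four points, and the numerator $\theta^2(\xi_{123})-\theta^2(\xi_4)+\theta^2(\xi_5)-\theta^2(\xi_6)$ is exactly $0$. Hence $M_4=0$; it is not merely ``small''.

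If $L_1^*\sim N$, Lemma \ref{Lemma_3.3} gives $|M_4|\lesssim N^{-2\alpha}$. Moreover $N^{-2\alpha}\lesssim\theta(K_1^*)^2(K_1^*)^{-2\alpha}$: the right-hand side is $\sim N^{-2\alpha}$ if $K_1^*\sim N$, and equals $(K_1^*)^{2s-2\alpha}N^{-2s}\geq N^{-2\alpha}$ if $K_1^*\geq 2N$, using $s\geq\alpha$.

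If $L_1^*\gg N$, your monotonicity argument on $[2N,\infty)$ applies.

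Put differently, $x\mapsto\theta(x)^2x^{-2\alpha}$ is essentially increasing on $[N,\infty)$, and the multiplier vanishes identically below that range, so no ``real work'' is needed there. This is precisely how the paper concludes, with $\xi_5$ (the largest of $\xi_4,\xi_5,\xi_6$) playing the role of your $L_1^*$.
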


\begin{proof}
Without loss of generality, we only estimate the first term appearing in the definition of $M_6$ in (\ref{M6_definition}); that is, we want to show that
    \begin{equation} \label{M6_to_M4_bound}
     |M_4(\xi_{123}, \xi_4, \xi_5, \xi_6)| \lesssim  \frac{1}{(K_1^*)^{2\alpha}} \theta(K_1^*) \theta(K_2^*).   
    \end{equation}
    
    If $(\xi_{123}, \xi_4, \xi_5, \xi_6) \in \Omega_{r}$, then $M_4 = 0$, thus we may assume that $(\xi_{123}, \xi_4, \xi_5, \xi_6) \in \Omega_{nr}$. By (\ref{important_inequality_equivalent_main_text})  and since $\widehat{V} \in L^\infty$, we have 
    \begin{align*}
       |M_4(\xi_{123}, \xi_4, \xi_5, \xi_6)| & \lesssim_{\alpha,\gamma} \frac{|\theta^2(\xi_{123}) - \theta^2(\xi_4) + \theta^2(\xi_5) - \theta^2(\xi_6)|}{|\xi_{1234}| \, |\xi_{1236}|} (|\xi_{123}| + |\xi_4|+|\xi_5|)^{2-2\alpha} \\[10pt]
       & \sim_{\alpha,\gamma}  \frac{|\theta^2(\xi_{456}) - \theta^2(\xi_4) + \theta^2(\xi_5) - \theta^2(\xi_6)|}{|\xi_{45}| \, |\xi_{56}|} (|\xi_{456}| + |\xi_4|+|\xi_5|)^{2-2\alpha},
    \end{align*}
    for all $(\xi_1, \xi_2, \xi_3,\xi_4,\xi_5,\xi_6) \in \Omega_{nr}$. Now, we  consider three main cases:
    \begin{enumerate}[itemsep=6pt,leftmargin=1.6cm]
        \item[\textbf{Case 1:}] $K_5 \geq K_6,K_4$, 
        \item[\textbf{Case 2:}] $K_4 \geq K_5,K_6$,
        \item[\textbf{Case 3:}] $K_6 \geq K_5,K_4$.
    \end{enumerate}

Cases $2$ and $3$ are symmetric, while Case $2$ is similar to Case $1$. Hence, we only consider Case 1, and in particular three subcases:
    \begin{enumerate}[leftmargin=2.2cm]  
        \item[\textbf{Subcase 1:}] $|\xi_5| \sim |\xi_{45}| \sim |\xi_{56}|$. In this case we have
        \[
        \frac{|\theta^2(\xi_{456}) - \theta^2(\xi_4) + \theta^2(\xi_5) - \theta^2(\xi_6)|}{|\xi_{45}| \, |\xi_{56}|} \lesssim \frac{\theta^2(\xi_5)}{|\xi_5|^2},
        \]
        hence
        \begin{equation} \label{Bound_Subcase_1}
         |M_4(\xi_{123}, \xi_4, \xi_5, \xi_6)| \lesssim  \frac{|\theta^2(\xi_{456}) - \theta^2(\xi_4) + \theta^2(\xi_5) - \theta^2(\xi_6)|}{|\xi_{45}| \, |\xi_{56}|} (|\xi_{456}| + |\xi_4|+|\xi_5|)^{2-2\alpha} \lesssim  \frac{\theta^2(\xi_5)}{|\xi_5|^{2\alpha}}.   
        \end{equation}
         \item[\textbf{Subcase 2:}] $|\xi_5| \sim |\xi_{56}| \gg |\xi_{45}|$ or $|\xi_5| \sim |\xi_{45}| \gg |\xi_{56}|$. By the mean value theorem and the first bound in (\ref{theta_properties}), we have
         \[
         |\theta^2(\xi_{456}) - \theta^2(\xi_6)| \lesssim |\xi_{45}| \, \frac{\theta^2(\xi_5)}{|\xi_5|}, \qquad \text{ and } \qquad |\theta^2(\xi_5)-\theta^2(\xi_4)| \lesssim |\xi_{45}| \, \frac{\theta^2(\xi_5)}{|\xi_5|},
         \]
         hence
         \[
          \frac{|\theta^2(\xi_{456}) - \theta^2(\xi_4) + \theta^2(\xi_5) - \theta^2(\xi_6)|}{|\xi_{45}| \, |\xi_{56}|} \lesssim \frac{|\xi_{45}|}{|\xi_{45}| \, |\xi_{56}|}\frac{\theta^2(\xi_5)}{|\xi_5|} \sim \frac{\theta^2(\xi_5)}{|\xi_5|^2},
         \]
         thus
         \begin{equation} \label{Bound_Subcase_2.1}
            |M_4(\xi_{123}, \xi_4, \xi_5, \xi_6)| \lesssim   \frac{\theta^2(\xi_5)}{|\xi_5|^{2\alpha}}. 
         \end{equation}
        The other case follows similarly; more specifically,
        \[
         |\theta^2(\xi_{456}) - \theta^2(\xi_4)| \lesssim |\xi_{56}| \, \frac{\theta^2(\xi_5)}{|\xi_5|}, \qquad \text{ and } \qquad |\theta^2(\xi_5)-\theta^2(\xi_6)| \lesssim |\xi_{56}| \, \frac{\theta^2(\xi_5)}{|\xi_5|},
         \]
         thus, as before,
         \[
          \frac{|\theta^2(\xi_{456}) - \theta^2(\xi_4) + \theta^2(\xi_5) - \theta^2(\xi_6)|}{|\xi_{45}| \, |\xi_{56}|} \lesssim \frac{|\xi_{56}|}{|\xi_{45}| \, |\xi_{56}|}\frac{\theta^2(\xi_5)}{|\xi_5|} \sim \frac{\theta^2(\xi_5)}{|\xi_5|^2},
         \]
         and hence
         \begin{equation}  \label{Bound_Subcase_2.2}
            |M_4(\xi_{123}, \xi_4, \xi_5, \xi_6)| \lesssim  \frac{\theta^2(\xi_5)}{|\xi_5|^{2\alpha}}. 
         \end{equation}
         \item[\textbf{Subcase 3:}] $|\xi_5| \gg |\xi_{56}|, |\xi_{45}|$. In this case, we use Proposition \ref{Double_MVT_Proposition}. More specifically, we write
         \begin{align*}
            \theta^2(\xi_5) - \theta^2(\xi_6) = \theta^2(\xi_5) - \theta^2(\xi_5 - \xi_{56}), \quad
             \theta^2(\xi_4) =\theta^2(-\xi_4) = \theta^2(\xi_5 - \xi_{45}),  \quad
              \theta^2(\xi_{456}) =  \theta^2(\xi_5-\xi_{45}-\xi_{56}),   
         \end{align*} 
    hence, by Proposition \ref{Double_MVT_Proposition} and the second bound in (\ref{theta_properties}), we get
         \begin{align*}
            |\theta^2(\xi_{456}) - \theta^2(\xi_4) + \theta^2(\xi_5) - \theta^2(\xi_6)| & = |\theta^2(\xi_5-\xi_{45}-\xi_{56}) - \theta^2(\xi_5 - \xi_{45}) + \theta^2(\xi_5) - \theta^2(\xi_5 - \xi_{56})|  \\[6pt]
           & \lesssim   |\xi_{45}| \, |\xi_{56}| \, \frac{\theta^2(\xi_5)}{|\xi_5|^2},
         \end{align*}
         and thus
         \begin{equation}  \label{Bound_Subcase_3}
            |M_4(\xi_{123}, \xi_4, \xi_5, \xi_6)| \lesssim  \frac{\theta^2(\xi_5)}{|\xi_5|^{2\alpha}}.
         \end{equation}
    \end{enumerate}
    
    To show (\ref{M6_to_M4_bound}), we first note that, if $\langle \xi_5 \rangle \ll N$, then
        \[
        \langle \xi_{123} \rangle = \langle \xi_4 + \xi_5 + \xi_6 \rangle \lesssim \langle \xi_5 \rangle \ll N,
        \]
        thus $|\xi_{123}|,|\xi_4|, |\xi_5|,| \xi_6| < N$, which implies $\theta^2(\xi_{123})-\theta^2(\xi_4)+\theta^2(\xi_5)-\theta^2(\xi_6) = 0$. Therefore, in this case, 
        \[
        M_4(\xi_{123}, \xi_4, \xi_5, \xi_6)= 0
        \]
        by definition, and hence (\ref{M6_to_M4_bound}) holds.
        
    Consequently, by (\ref{Bound_Subcase_1}), (\ref{Bound_Subcase_2.1}), (\ref{Bound_Subcase_2.2})  and (\ref{Bound_Subcase_3}), to show (\ref{M6_to_M4_bound}) it is enough to prove that
    \begin{equation} \label{Final_bound}
       \frac{\theta^2(\xi_5)}{|\xi_5|^{2\alpha}} \lesssim \frac{1}{(K_1^*)^{2\alpha}} \theta(K_1^*) \theta(K_2^*)
    \end{equation}
    for $ \langle \xi_5 \rangle \sim N$ and $ \langle \xi_5 \rangle \gg N$. To show (\ref{Final_bound}) for these two cases, we note that, since $(\xi_1, \ldots, \xi_6) \in \Gamma_6$, we always have that $K_1^* \sim K_2^*$, and thus $\theta(K_1^*) \sim \theta(K_2^*)$. Taking this into account, we can deduce (\ref{Final_bound}) as follows:
    \begin{itemize}
        \item $  \langle \xi_5 \rangle \sim N$. In this region, $\theta(\xi_5) \sim 1$ and thus
        \begin{equation*} \label{x5_bound}
         \frac{\theta(\xi_5)^2}{|\xi_5|^{2\alpha}} \sim \frac{1}{N^{2\alpha}}.   
        \end{equation*}
        
        If $K_1^* \sim N$, then $\theta(K_1^*) \sim 1$,  hence
        \[
        \frac{\theta(K_1^*)\theta(K_2^*)}{(K_1^*)^{2\alpha}} \sim \frac{1}{N^{2\alpha}}.
        \]
    
        If, on the other hand, $K_1^* \gg N$, then
        \[
        \frac{\theta(K_1^*)\theta(K_2^*)}{(K_1^*)^{2\alpha}} \sim \frac{(K_1^*)^{2s-2\alpha}}{N^{2s}} \gtrsim \frac{1}{N^{2\alpha}},
        \]
        where we used that $s\geq \alpha$ in the last inequality. Thus, in both cases, (\ref{Final_bound}) holds.
        \item $\langle \xi_5 \rangle \gg N$. In this case, we may assume that $|\xi_5|\geq 2N$. Since the map $r \mapsto \frac{\theta^2(r)}{r^{2\alpha}}$ is increasing on $[2N, + \infty)$ and $|\xi_5| \lesssim K_1^* \sim K_2^*$, (\ref{Final_bound}) follows.
    \end{itemize}
This completes the proof.
\end{proof}

\subsection{Proof of Theorem \ref{Main_Theorem}}

The proof of the main result follows by an iteration argument. First, we have the following result.

\begin{proposition} \label{Proposition_3.4_d.alpha}
Let $d \in \{2,3\}$ and let $u$ be the global solution of (\ref{equation_fractional_hartree}) for $s>\frac{d}{2}$ and $\alpha \in (\frac{d}{4},1]$. Then, for $N$ sufficiently large,
\begin{equation}
E_1(u(t)) \sim E_2(u(t))
\end{equation}
for every fixed time $t$, where the implicit constant is independent of $t$ and $N$.
\end{proposition}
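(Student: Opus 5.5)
Since $E_2(u)=E_1(u)+\lambda_4(M_4;u)$, it suffices to show
\[
|\lambda_4(M_4;u(t))|\lesssim N^{-\beta}\,\|\D u(t)\|_{L^2}^2
\]
for some $\beta>0$. The implicit constant may depend on $E(u_0)$ and $M(u_0)$, but not on $t$ or $N$. Taking $N$ large then gives $\tfrac12 E_1\le E_2\le \tfrac32 E_1$. The argument is at a fixed time, so no $X^{s,b}$ machinery is needed.

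First, dyadically localise $\langle\xi_j\rangle\sim N_j$ and order $N_1^*\ge\dots\ge N_4^*$. Since $\xi_1+\dots+\xi_4=0$, we have $N_1^*\sim N_2^*$. Lemma \ref{Lemma_3.3} gives $|M_4|\lesssim (N_1^*)^{-2\alpha}\theta(N_1^*)\theta(N_2^*)$. If all $|\xi_j|\le N$, then $M_4=0$, because the numerator $\theta^2(\xi_1)-\theta^2(\xi_2)+\theta^2(\xi_3)-\theta^2(\xi_4)$ vanishes there. Hence only $N_1^*\gtrsim N$ contributes.

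Next, estimate each localised piece. Write $F_j$ for $|\widehat{u}|$ or $|\widehat{\overline u}|$ restricted to the $j$-th annulus, and bound $|M_4|$ by its supremum on the piece. This reduces the piece to
\[
(N_1^*)^{-2\alpha}\int_{\Gamma_4}\theta(\xi_{(1)})F_{(1)}\,\theta(\xi_{(2)})F_{(2)}\,F_{(3)}F_{(4)}\,d\Gamma_4 .
\]
Here $\theta(N_j^*)\sim\theta(\xi_j)$ on each annulus because $\theta$ is radial and essentially monotone. Return to physical space with functions whose Fourier transforms are $|\widehat{\cdot}|$, and use Hölder:
\[
\|\D u_{N_1^*}\|_{L^2}\,\|\D u_{N_2^*}\|_{L^2}\,\|u_{N_3^*}\|_{L^\infty}\,\|u_{N_4^*}\|_{L^\infty}.
\]
Bound the two $L^\infty$ norms by Bernstein, or by Sobolev in $H^\alpha$, giving $(N_j^*)^{d/2-\alpha}\|u\|_{H^\alpha}$. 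By energy and mass conservation, $\|u\|_{H^\alpha}$ is bounded uniformly in time.

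The piece is therefore bounded by
\[
(N_1^*)^{-2\alpha}(N_3^*)^{d/2-\alpha}(N_4^*)^{d/2-\alpha}\,\|\D u\|_{L^2}^2 \;\lesssim\; (N_1^*)^{d-4\alpha}\,\|\D u\|_{L^2}^2 .
\]
This uses $N_3^*,N_4^*\le N_1^*$ and $d/2-\alpha\ge0$; if $d/2-\alpha<0$ the estimate is even better. The exponent $d-4\alpha$ is negative precisely because $\alpha>d/4$, and this is exactly where the hypothesis $\alpha\in(\tfrac d4,1]$ enters. Summing over dyadic scales with $N_1^*\gtrsim N$ and $N_1^*\sim N_2^*$ gives $N^{d-4\alpha+}\|\D u\|_{L^2}^2$, so one may take $\beta=4\alpha-d-$.

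The main obstacle is the summation over dyadic blocks. The pairing of $N_1^*$ with $N_2^*$ needs Cauchy--Schwarz in the dyadic index, using $\sum_N\|\D u_N\|_{L^2}^2\sim\|\D u\|_{L^2}^2$. The sums over $N_3^*$ and $N_4^*$ each cost at most a factor $\log N_1^*$, which is absorbed by the negative power of $N_1^*$. A further technicality is that the Fourier-side bound on $M_4$ must be used with absolute values, since $M_4$ is not a product multiplier. For the same reason, the boundedness of $\widehat V$ has already been built into Lemma \ref{Lemma_3.3}.
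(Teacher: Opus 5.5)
Your proposal is correct and follows essentially the same route as the paper: dyadic localisation with $N_1^*\gtrsim N$, the multiplier bound of Lemma \ref{Lemma_3.3}, passage to functions with Fourier transform $|\widehat{\cdot}|$, H\"older with exponents $(2,2,\infty,\infty)$, and the uniform $H^\alpha$ bound from mass and energy conservation, which together give $|E_2-E_1|\lesssim N^{-(4\alpha-d)+}E_1$ with decay exactly because $\alpha>\frac d4$. The differences are only cosmetic: you use Bernstein where the paper uses the embedding $H^{\frac d2+}\hookrightarrow L^\infty$, and you spell out the dyadic summation, which the paper absorbs into the $\epsilon$-loss.
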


\begin{proof}

Since  $E_2(u)-E_1(u)=\lambda_4(M_4;u)$, we have
\[
\big|\lambda_4(M_4;u)\big|
\leq
\int_{\Gamma_4}
|M_4(\xi_1,\xi_2,\xi_3,\xi_4)| \,
|\widehat{u}(\xi_1)|\,
|\widehat{\overline{u}}(\xi_2)|\,
|\widehat{u}(\xi_3)|\,
|\widehat{\overline{u}}(\xi_4)| \; d\Gamma_4(\xi).
\]

We dyadically localise the frequencies by  $\langle \xi_j \rangle \sim N_j$,
and order them such that $N_1^* \geq N_2^* \geq N_3^* \geq N_4^*$.
Notice that $N_1^* \sim N_2^*$ because $(\xi_1,\xi_2,\xi_3,\xi_4)\in \Gamma_4$. Moreover, the nonzero contributions occur when
\begin{equation} \label{Nj_conditions_Proposition_3.4_d.alpha}
\quad N_1^*\gtrsim N.    
\end{equation}
Let us denote the corresponding contribution to $\lambda_4(M_4;u)$ by $I_{N_1,N_2,N_3,N_4}$. Using Parseval's identity
and Lemma \ref{Lemma_3.3}, we obtain
\[
\big|I_{N_1,N_2,N_3,N_4}\big|
\lesssim
\frac{1}{(N_1^*)^{2\alpha}} \int_{\Gamma_4}
|\widehat{ \mathcal{D}u_{N_1^*}}(\xi_1)|
|\widehat{ \mathcal{D}\overline{u_{N_2^*}}}(\xi_2)|
|\widehat{u_{N_3^*}}(\xi_3)|
|\widehat{\overline{u_{N_4^*}}}(\xi_4)|\; d\Gamma_4(\xi).
\]
Here, we may assume without loss of generality that the complex conjugates are applied to the terms $u_{N_2^*}$ and $u_{N_4^*}$, as any other case is treated similarly. 

Now, for $j=1,\dots,4$, let us define the functions $F_1, \ldots, F_4$ on the space Fourier side by
\[
\widehat{F_1}:=\big|\widehat{ \mathcal{D}u_{N_1^*}}\big|,\qquad
\widehat{F_2}:=\big|\widehat{ \mathcal{D}\overline{ u_{N_2^*}}}\big|,\qquad
\widehat{F_3}:=\big|\widehat{u_{N_3^*}}\big|,\qquad
\widehat{F_4}:=\big|\widehat{\overline{ u_{N_4^*}}}\big|.
\]
By Parseval's identity, H\"{o}lder's inequality with exponents $L_x^2, L_x^2, L_x^\infty, L_x^\infty$, the Sobolev embedding $H_x^{\frac{d}{2}+} \hookrightarrow L_x^\infty$, the fact that $\|u\|_{H^\alpha} \lesssim 1$, and the fact that taking absolute values in the Fourier
transform does not change Sobolev norms, we obtain  
\begin{align*}
|I_{N_1,N_2,N_3,N_4}| & \lesssim \frac{1}{(N_1^*)^{2\alpha}}\int_{\R^d} F_1 \overline{F_2} F_3 \overline{F_4} \; dx \\[6pt] 
& \lesssim \frac{1}{(N_1^*)^{2\alpha}} \|F_1\|_{L_x^2}   \|F_2\|_{L_x^2} \,  \|F_3\|_{L^\infty_x}   \,   \|F_4\|_{L^\infty_x} \\[6pt]
& \lesssim  \frac{1}{(N_1^*)^{2\alpha}} \|F_1\|_{L_x^2}   \|F_2\|_{L_x^2} \,  \|F_3\|_{H_x^{\frac{d}{2}+}}   \,   \|F_4\|_{H_x^{\frac{d}{2}+}}  \\[6pt]
& \lesssim 
 \frac{1}{(N_1^*)^{2\alpha}} \; \|\mathcal D u_{N_1^*}\|_{L_x^2} \; \|\mathcal D u_{N_2^*}\|_{L_x^2}  \; (N_1^*)^{(\frac{d}{2}- \alpha)+} \; \|u_{N_3^*}\|_{H^\alpha}  \; (N_1^*)^{(\frac{d}{2} - \alpha) +} \|u_{N_4^*}\|_{H^\alpha} \\[6pt]
& \lesssim  \frac{1}{(N_1^*)^{(4\alpha-d)-}} \;  \|\mathcal D u\|_{L_x^2}^2 \; \|u\|_{H^\alpha}^2 \\[6pt]
& \lesssim  \frac{1}{(N_1^*)^{(4\alpha-d)-}}\;  E_1(u).
\end{align*}

Summing this over the $N_j$'s and using (\ref{Nj_conditions_Proposition_3.4_d.alpha}), we get
\[
|E_2(u) - E_1(u)| \lesssim  \frac{1}{N^{(4\alpha-d)-}} E_1(u).
\]
The exponent on $N$ is positive when $4\alpha-d >0$, that is when $\alpha > \frac{d}{4}$. This completes the proof.
\end{proof}

To argue via an iteration argument, we want to estimate
\[
E_2(u(t_0 + \delta)) - E_2(u(t_0)) = \int_{t_0}^{t_0+\delta} \frac{d}{dt} E_2(u(t))\;dt = \int_{t_0}^{t_0+\delta} \frac{d}{dt} E_2(v(t))\;dt,
\]
where $\delta>0$ and $v$ are as in Proposition \ref{Proposition_3.1_d.alpha}. We prove the following iteration bound.

\begin{lemma} \label{Lemma_3.5_fNLS}
    Let $d \in \{2,3\}$, let $\delta>0$ be as in Proposition \ref{Proposition_3.1_d.alpha}, and let $\alpha \in (\frac{d}{4},1]$. Then, for all $t_0 \in \R$, one has
    \[
    |E_2(u(t_0+\delta)) - E_2(u(t_0))| \lesssim \frac{1}{N^{\frac{3}{2}(2\alpha - 1)-}} E_2(u(t_0)), \qquad \text{ for } \enspace d=2, 
    \]
    and
    \[
    |E_2(u(t_0+\delta)) - E_2(u(t_0))| \lesssim \frac{1}{N^{(4\alpha-3)-}} E_2(u(t_0)), \qquad \text{ for } \enspace d=3.
    \]
\end{lemma}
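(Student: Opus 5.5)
We integrate (\ref{E2_derivative}) over $[t_0,t_0+\delta]$ and replace $u$ by the extension $v$ from Proposition \ref{Proposition_3.1_d.alpha}. Inserting the time cutoff $\chi=\chi_{[t_0,t_0+\delta]}$ on one factor, we write
\[
E_2(u(t_0+\delta))-E_2(u(t_0)) = \mathrm{I}+\mathrm{II}.
\]
Here $\mathrm{I}$ is the space-time integral of the quadrilinear resonant term over $\Omega_r$, and $\mathrm{II}$ is the space-time integral of the sextilinear term with multiplier $M_6$. We dyadically localise all frequencies, $\langle\xi_j\rangle\sim N_j$. Only configurations with $N_1^*\gtrsim N$ contribute, since otherwise all $\theta$'s equal $1$ and the multipliers vanish. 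In each localised piece we replace the Fourier transforms by their absolute values. We then use the bilinear estimate Proposition \ref{Proposition_2.5.d.alpha}, which already accommodates the factor $\chi$ through the decomposition $\chi=a+b$. We also use $\|\mathcal D v\|_{X^{0,\frac12+}}\lesssim \|\mathcal D u(t_0)\|_{L^2}$ and $\|v\|_{X^{\alpha,\frac12+}}\lesssim 1$. The conclusion follows from $E_1\sim E_2$ (Proposition \ref{Proposition_3.4_d.alpha}).

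\textbf{Term I.} On $\Omega_r$ we bound the symbol $\theta^2(\xi_1)-\theta^2(\xi_2)+\theta^2(\xi_3)-\theta^2(\xi_4)$ by mean value arguments, as in Lemma \ref{lemma_for_sextilinear_term_B} and \cite[pages 13--16]{Sohinger_Hartree_2D}. The gain comes from the angular restriction $|\cos\angle(\xi_{12},\xi_{14})|\le\gamma$. In the nontrivial case $N_1\sim N_2\gg N_3,N_4$, the symbol is at most $\frac{N_3^*}{N_1^*}\theta(N_1^*)\theta(N_2^*)$, or better by the double mean value Proposition \ref{Double_MVT_Proposition}. We pair the factors as (high, low)$\times$(high, low) and apply Proposition \ref{Proposition_2.5.d.alpha} twice. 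Each pair gives $(N_j^{*})^{\frac d2-\frac12}/(N_1^*)^{(\alpha-\frac12)-}$, so the total gain is $(N_1^*)^{-(2\alpha-1)+}$ times powers of the low frequencies. We absorb those powers by the $H^\alpha$ control of $v$ together with the derivative gain $N_3^*/N_1^*$. For $d=2$ the counting gives $N^{-\frac32(2\alpha-1)+}$. To get this we use the extra $(N_3^*/N_1^*)^{1/2}$ obtained by interpolating the symbol bound against the trivial one. This is the step I would check most carefully.

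\textbf{Term II.} By Lemma \ref{lemma_for_sextilinear_term_B}, $|M_6|\lesssim (K_1^*)^{-2\alpha}\theta(K_1^*)\theta(K_2^*)$ with $K_1^*\sim K_2^*\gtrsim N$. We place $\mathcal D$ on the two highest factors. We bound those two by the bilinear estimate, pairing each with one of the two next-highest factors, and the remaining two factors in $L^\infty_{t,x}$ via (\ref{15.d.alpha}) from $X^{\frac d2+,\frac12+}$, paying $(K^*)^{(\frac d2-\alpha)+}$ each. Alternatively, in $d=3$ we use the $L^{10/3\pm}_{t,x}$ bounds (\ref{II_term_Y_bounds_plus_minus}) with H\"older exponents $(2,2,\frac{10}{3}\pm,\ldots)$. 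Balancing gives decay $(K_1^*)^{-(4\alpha-3)+}$ in $d=3$ and at least $(K_1^*)^{-\frac32(2\alpha-1)+}$ in $d=2$. The leading power $(K_1^*)^{-2\alpha}$ combined with the bilinear gains beats the Sobolev losses exactly when $\alpha>\frac d4$. Summing dyadically over $K_j^*$ uses the $0+$ losses.

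\textbf{Main obstacle.} I expect the main difficulty to be the resonant term $\mathrm{I}$: extracting the sharp $N^{-\frac32(2\alpha-1)}$ decay in $d=2$ and making the low-frequency losses from Proposition \ref{Proposition_2.5.d.alpha} (the factor $N_2^{\frac d2-\frac12}$) summable. In $d=3$ the exponent $4\alpha-3$ comes from the sextilinear term, and there the bottleneck is instead the $L^\infty$ or $L^{10/3}$ losses.
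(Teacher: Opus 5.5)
Your overall route is the paper's: the split into $\mathrm{I}+\mathrm{II}$, the symbol bound $\frac{N_3^*}{N_1^*}\theta(N_1^*)\theta(N_2^*)$ on $\Omega_r$, the high--low pairing with Corollary \ref{Proposition_2.3.d.alpha} and Proposition \ref{Proposition_2.5.d.alpha}, and Lemma \ref{lemma_for_sextilinear_term_B} for $\mathrm{II}$. The angle condition does not itself produce the gain. It only rules out the configuration in which the two top frequencies are both non-conjugated or both conjugated, where the $\theta^2$ terms add rather than cancel. However, your Term I discussion contains a wrong step and a missing case.

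\textbf{The interpolation step.} The ``extra $(N_3^*/N_1^*)^{1/2}$ from interpolating the symbol bound against the trivial one'' does not exist. Interpolating $\frac{N_3^*}{N_1^*}$ with $1$ can only give the weaker $(N_3^*/N_1^*)^{\beta}$ with $\beta\le 1$, and when $N_3^*\sim N_1^*$ there is no gain at all. Fortunately it is also unnecessary. Put the factor $N_3^*$ on $v_{N_3^*}$ as a gradient, and use
\[
\|\nabla v_{N_3^*}\|_{X^{0,\frac12+}}\lesssim (N_3^*)^{1-\alpha}\|v\|_{X^{\alpha,\frac12+}},\qquad \|v_{N_4^*}\|_{X^{0,\frac12+}}\lesssim (N_4^*)^{-\alpha}\|v\|_{X^{\alpha,\frac12+}}.
\]
The two bilinear estimates then leave the frequency factor
\[
(N_1^*)^{-2\alpha+}\,(N_3^*)^{\frac d2+\frac12-\alpha}\,(N_4^*)^{\frac d2-\frac12-\alpha}.
\]
Bound $N_3^*,N_4^*$ by $N_1^*$ when the exponent is nonnegative, and by $1$ otherwise. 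This gives $(N_1^*)^{-(3\alpha-\frac32)+}$ for $d=2$ and $(N_1^*)^{-(4\alpha-3)+}$ for $d=3$.

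\textbf{The missing case.} In $d=3$ the exponent $4\alpha-3$, and the restriction $\alpha>\frac34$, come from $\mathrm{I}$, not from $\mathrm{II}$ as you claim. You never carried out $\mathrm{I}$ for $d=3$, yet this is the estimate that determines the rate in both dimensions.

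\textbf{Term II.} Your H\"older split differs from the paper's, and it is fine. The paper uses one bilinear estimate and four $L^8_{t,x}$ Strichartz factors in $d=2$, and $L^{\frac{10}{3}-}\times L^{\frac{10}{3}+}\times (L^{10})^4$ with no bilinear estimate in $d=3$. You instead use two bilinear pairs, keeping $\chi$ on one of them, and two $L^\infty_{t,x}$ factors via (\ref{15.d.alpha}). This gives $(K_1^*)^{-3(2\alpha-1)+}$ in $d=2$, better than the paper's $4\alpha-2$, and $(K_1^*)^{-(8\alpha-6)+}$ in $d=3$, the same as the paper. Both are stronger than needed, so $\mathrm{II}$ is never the bottleneck. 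Your ``alternative'' exponents $(2,2,\frac{10}{3}\pm,\ldots)$ do not form a H\"older tuple and should be dropped.
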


\begin{proof}[Proof of Lemma \ref{Lemma_3.5_fNLS}]
First, without loss of generality, we may assume that $t_0 =0$. The general claim will follow by time translation
and the fact that all of the implied constants are uniform in time. By (\ref{E2_derivative}), we need to estimate
\begin{align*}
& \int_0^\delta 
    \int_{\Omega_r}
    \left( \theta^2(\xi_1) - \theta^2(\xi_2) + \theta(\xi_3)^2 - \theta^2(\xi_4) \right) \widehat{V}(\xi_3+\xi_4) \; \widehat{v}(\xi_1) \widehat{\overline{v}}(\xi_2) \widehat{v}(\xi_3) \widehat{\overline{v}}(\xi_4) \; d\Gamma_4(\xi) \, dt \\[6pt]
    & + \int_0^\delta \int_{\Gamma_6}
    M_6(\xi_1, \xi_2, \xi_3, \xi_4, \xi_5, \xi_6) \; \widehat{v}(\xi_1) \widehat{\overline{v}}(\xi_2) \widehat{v}(\xi_3) \widehat{\overline{v}}(\xi_4) \widehat{v}(\xi_5) \widehat{\overline{v}}(\xi_6) \; d\Gamma_6(\xi) \,dt \\[8pt]
& =: I + II,
\end{align*}
where $\delta>0$ and $v$ are as in Proposition \ref{Proposition_3.1_d.alpha}. We treat each term separately.

\noindent
\textbf{Estimate of $I$}: Without loss of generality, we may assume that 
\[
|\xi_1| \geq |\xi_2|,  |\xi_3|,  |\xi_4|, \enspace  \enspace |\xi_2| \geq |\xi_4|, \quad \text{ and  }  \quad |\xi_1| > N.
\]

Furthermore, we dyadically localise the frequencies, i.e.  $\langle \xi_j \rangle \sim N_j$,  $j = 1, \ldots, 4$, and order the $N_j$'s in decreasing order, that is $N_1^* \geq N_2^* \geq N_3^* \geq N_4^*$. Thus, we need to consider the contribution for which\footnote{Note that we may assume without loss of generality that $\xi_{12},\xi_{14} \neq 0$.}
\[
N_1^* \gtrsim N, \qquad |\cos \angle(\xi_{12},\xi_{14})| \leq \gamma <1.
\]
Since $\xi_1 + \xi_2 + \xi_3 + \xi_4 = 0$ and $|\cos \angle(\xi_{12},\xi_{14})| \leq \gamma <1$, we have
\begin{equation} \label{Nj_conditions}
 N_1 \sim N_2 \sim N_1^* \sim N_2^* \gtrsim N.
\end{equation}
To see the first relation, suppose for the sake of contradiction that $N_1 \gg N_2$. Then $N_1 \gg N_4$, and thus the vectors $\xi_{12}$ and $\xi_{14}$ form a very small angle. Hence, $|\cos \angle(\xi_{12},\xi_{14})|$ is close to $1$, which contradicts the assumption that $|\cos \angle(\xi_{12},\xi_{14})| \leq \gamma <1$.

Let us now consider the case $N_1^* \sim N_2^* \gg N_3^*,N_4^*$. Assuming this dyadic decomposition, we want to estimate
\begin{align} 
I_{N_1, N_2, N_3, N_4} : = 
\int_0^\delta & \int_{\Omega_r}  (\theta^2(\xi_1) - \theta^2(\xi_2) + \theta^2(\xi_3) - \theta^2(\xi_4)) \; \notag \\[6pt]
& \qquad \widehat{V}(\xi_3 + \xi_4)  \widehat{v_{N_1^*}}(\xi_1) \widehat{\overline{v_{N_2^*}}}(\xi_2) \widehat{v_{N_3^*}}(\xi_3) \widehat{\overline{v_{N_4^*}}}(\xi_4) \; d\Gamma_4(\xi) \,dt. \label{I_N1N2N3N4}
\end{align}
As before, we may assume without loss of generality that the complex conjugates are applied to the terms $v_{N_2^*}$ and $v_{N_4^*}$. 

Now, by (\ref{Nj_conditions}) and Proposition \ref{Double_MVT_Proposition},  arguing as in \cite[page 17-18]{Sohinger_Hartree_2D} yields
\begin{equation} \label{multiplier_double_mean}
    (\theta^2(\xi_1) - \theta^2(\xi_2) + \theta^2(\xi_3) - \theta^2(\xi_4)) \; \widehat{V}(\xi_3 + \xi_4) = O\left(   \frac{N^*_3}{N^*_1}\theta(N^*_1)\theta(N^*_2) \right).
\end{equation}

Consequently, by (\ref{multiplier_double_mean}) and Parseval's identity in time, we get
\begin{align*}
&|I_{N_1, N_2, N_3, N_4}|   \\
& \lesssim \int_{\tau_1+\tau_2+\tau_3+\tau_4=0} \int_{\Omega_r} \frac{N^*_3}{N^*_1} \theta(N^*_1)\theta(N^*_2) \\[6pt]
 &\qquad \qquad | \widetilde{v_{N_1^*}}(\tau_1,\xi_1)| |\widetilde{\chi \overline{ v_{N_2^*}}}( \tau_2,\xi_2)| |\widetilde{v_{N_3^*}}( \tau_3,\xi_3)| |\widetilde{\overline{v_{N_4^*}}}( \tau_4,\xi_4)| \; d\Gamma_4(\xi)\,d\Gamma_4(\tau)  \\[8pt]
& \lesssim 
\frac{1}{N_{1}^*}\!
\int_{\tau_1+\tau_2+\tau_3+\tau_4=0}
\int_{\Gamma_4}
\big| \reallywidetilde{\mathcal{D}v_{N_1^*}}(\tau_1,\xi_1)\big|
\big|\reallywidetilde{\chi \mathcal{D}\overline{v_{N_2^*}}}(\tau_2,\xi_2)\big|
\big|\reallywidetilde{\nabla v_{N_3^*}}(\tau_3, \xi_3)\big|
\big|\widetilde{\overline{v_{N_4^*}}}(\tau_4,\xi_4)\big| \;
d\Gamma_4(\xi)\,d\Gamma_4(\tau).
\end{align*}

Let us define the functions $F_1, \ldots, F_4$ on the space-time Fourier side by
\[
\widetilde{F}_1:=\big|\reallywidetilde{\mathcal{D}v_{N_1^*}}\big|,\qquad
\widetilde{F}_2:=\big|\reallywidetilde{\chi\mathcal{D}\overline{v_{N_2^*}}}\big|,\qquad
\widetilde{F}_3:=\big|\reallywidetilde{\nabla v_{N_3^*}}\big|,\qquad
\widetilde{F}_4:=\big|\widetilde{\overline{v_{N_4^*}}}\big|.
\]
By Parseval's identity in space and time we further have
\[
|I_{N_1,N_2,N_3,N_4}|
\;\lesssim\;
\frac{1}{N_{1}^*}\int_{\R}\int_{\R^d} F_1 \overline{F_2} F_3 \overline{F_4} \; dx \, dt.
\]

Consequently, by H\"{o}lder's inequality, Corollary \ref{Proposition_2.3.d.alpha} and Proposition \ref{Proposition_2.5.d.alpha} we obtain
\begin{align*}
&|I_{N_1,N_2,N_3,N_4}| \lesssim \frac{1}{N_1^*}\,\|F_1F_3\|_{L^2_{t,x}}\,\|F_2F_4\|_{L^2_{t,x}} \\[6pt]
&\lesssim \frac{1}{N_1^*}
\Big(\frac{(N_3^*)^{\frac{d}{2}-\frac{1}{2}}}{(N_1^*)^{\alpha -\frac{1}{2}}}\|\D v_{N_1^*}\|_{X^{0,\frac12+}}\|\nabla v_{N_3^*}\|_{X^{0,\frac12+}}\Big)
\Big(\frac{(N_4^*)^{\frac{d}{2}-\frac{1}{2}}}{(N_2^*)^{(\alpha -\frac{1}{2})-}}\|\D v_{N_2^*}\|_{X^{0,\frac12+}}\|v_{N_4^*}\|_{X^{0,\frac12+}}\Big) \\[6pt]
&\sim \frac{1}{N_1^*}
\Big(\frac{(N_3^*)^{\frac{d}{2}-\frac{1}{2}}}{(N_1^*)^{\alpha -\frac{1}{2}}}\|\D v_{N_1^*}\|_{X^{0,\frac12+}} (N_3^*)^{1-\alpha} \|v_{N_3^*}\|_{X^{\alpha,\frac12+}}\Big)
\Big(\frac{(N_4^*)^{\frac{d}{2}-\frac{1}{2}}}{(N_2^*)^{(\alpha -\frac{1}{2})-}}\|\D v_{N_2^*}\|_{X^{0,\frac12+}}  (N_4^*)^{-\alpha} \|v_{N_4^*}\|_{X^{\alpha,\frac12+}}\Big).
\end{align*}
For $d=2$, by Propositions \ref{Proposition_3.1_d.alpha} and \ref{Proposition_3.4_d.alpha} we further get
\begin{align*}
|I_{N_1,N_2,N_3,N_4}|
&\lesssim \frac{1}{N_1^*}
\Big(\frac{(N_1^*)^{\frac{3}{2}-\alpha}}{(N_1^*)^{\alpha -\frac{1}{2}}}\|\D v_{N_1^*}\|_{X^{0,\frac12+}}  \|v_{N_3^*}\|_{X^{\alpha,\frac12+}}\Big)
\Big(\frac{1}{(N_1^*)^{(\alpha -\frac{1}{2})-}}\|\D v_{N_2^*}\|_{X^{0,\frac12+}}   \|v_{N_4^*}\|_{X^{\alpha,\frac12+}}\Big) \\[6pt]
& \lesssim \frac{1}{(N_1^*)^{(3\alpha-\frac{3}{2})-}} E_2(u_0).
\end{align*}

Summing over all $N_j$ and using (\ref{Nj_conditions}), we deduce that
\begin{equation} \label{I_2D}
    |I| \lesssim \frac{1}{N^{(3\alpha-\frac{3}{2})-}} E_2(u_0).
\end{equation} 
Thus, we require $3\alpha-\frac{3}{2} >0$, that is $\alpha > \frac{1}{2}$. \\

Similarly, for $d=3$ we have
\begin{align*}
|I_{N_1,N_2,N_3,N_4}|
&\lesssim \frac{1}{N_1^*}
\Big(\frac{(N_1^*)^{2-\alpha}}{(N_1^*)^{\alpha -\frac{1}{2}}}\|\D v_{N_1^*}\|_{X^{0,\frac12+}}  \|v_{N_3^*}\|_{X^{\alpha,\frac12+}}\Big)
\Big(\frac{(N_4^*)^{1-\alpha}}{(N_2^*)^{(\alpha -\frac{1}{2})-}}\|\D v_{N_2^*}\|_{X^{0,\frac12+}}   \|v_{N_4^*}\|_{X^{\alpha,\frac12+}}\Big) \\[6pt]
& \lesssim \frac{1}{(N_1^*)^{(4\alpha - 3) -}} E_2(u_0),
\end{align*}
which then implies
\begin{equation} \label{I_3D}
    |I| \lesssim \frac{1}{N^{(4\alpha - 3) -}} E_2(u_0).
\end{equation}
Thus, we need $4\alpha - 3  >0$, i.e. $\alpha > \frac{3}{4}$. \\

\noindent
\textbf{Estimate of $II$}: Arguing as above, we dyadically localise $\xi_1,\xi_2,\ldots,\xi_6$, i.e. $\langle \xi_j \rangle \sim K_j$ and we order them such that $K_1^* \geq K_2^* \geq \ldots \geq K_6^*$. Hence, we have $K_1^* \sim K_2^*$, and the nonzero contributions occur when
\begin{equation} \label{Kj_conditions}
K_1^*  \gtrsim N.    
\end{equation}

Now, let us consider the case $K_1^* \sim K_2^* \gg K_3^*,K_4^*,K_5^*,K_6^*$. Thus, we need to estimate the corresponding contribution to $II$, defined by
\begin{align*}
II_{K_1,K_2,K_3,K_4,K_5,K_6}  &:= \int_0^\delta \int_{\Gamma_6}
M_6(\xi_1,\xi_2,\xi_3,\xi_4,\xi_5,\xi_6)\,\widehat{v_{K_1^*}}(\xi_1) \widehat{\overline{v_{K_2^*}}}(\xi_2) \\[6pt]
& \hspace{5cm} \widehat{v_{K_3^*}}(\xi_3)\,\widehat{\overline{v_{K_4^*}}}(\xi_4)\,
 \widehat{v_{K_5^*}}(\xi_5)\,\widehat{\overline{v_{K_6^*}}}(\xi_6)\; d\Gamma_6(\xi)\,dt.
\end{align*}
As before, we may assume without loss of generality that the complex conjugates are applied to the terms $v_{K_2^*}$, $v_{K_4^*}$ and $v_{K_6^*}$. 

By Parseval's identity and Lemma \ref{lemma_for_sextilinear_term_B}, we have
\begin{align}
 & |II_{K_1,K_2,K_3,K_4,K_5,K_6}|  \notag \\[6pt]
 & \lesssim   \frac{\theta(K_1^*)\theta(K_2^*)}{(K_1^*)^{2\alpha}}  \int_{\tau_1 + \ldots + \tau_6=0}  \int_{\Gamma_6}
 |\widetilde{ \chi v_{K_1^*}}(\tau_1,\xi_1)| \, |\widetilde{\overline{v_{K_2^*}}}(\tau_2,\xi_2)| \notag \\[6pt] 
& \hspace{4.8cm} |\widetilde{ v_{K_3^*}}(\tau_3,\xi_3)| \,|\widetilde{\overline{v_{K_4^*}}}(\tau_4,\xi_4)| \,
|\widetilde{ v_{K_5^*}}(\tau_5,\xi_5)| \,|\widetilde{\overline{v_{K_6^*}}}(\tau_6,\xi_6)| \; d\Gamma_6(\xi)\, d\Gamma_6(\tau)  \notag \\[10pt]
 & \lesssim \frac{1}{ (K_1^*)^{2\alpha}}   \int_{\tau_1 + \ldots + \tau_6=0}  \int_{\Gamma_6}
 \,|\reallywidetilde{\chi \D v_{K_1^*}}(\tau_1,\xi_1)| \,
|\reallywidetilde{  \D \overline{v_{K_2^*}}}(\tau_2,\xi_2)|  \notag \\[6pt]
& \hspace{5cm} |\widetilde{v_{K_3^*}}(\tau_3,\xi_3)|
\, |\widetilde{\overline{v_{K_4^*}}}(\tau_4,\xi_4)|  
\, |\widetilde{v_{K_5^*}}(\tau_5,\xi_5)| 
\, |\widetilde{\overline{v_{K_6^*}}}(\tau_6,\xi_6)|  \; d\Gamma_6(\xi)\, d\Gamma_6(\tau).   \label{II_example_term}
\end{align}

Now let us define the functions $F_1, \ldots, F_6$ on the space-time Fourier side by
\[ 
\widetilde{F_1}:=|\reallywidetilde{\chi \D v_{K_1^*}}|,\qquad 
\widetilde{F_2}:=|\reallywidetilde{  \D \overline{v_{K_2^*}}}|,\qquad
\widetilde{F_3}:=|\widetilde{ v_{K_3^*}}|,\qquad
\widetilde{F_4}:=|\widetilde{ \overline{v_{K_4^*}}}|,  \qquad
\widetilde{F_5}:=|\widetilde{ v_{K_5^*}}|, \qquad
\widetilde{F_6}:=|\widetilde{ \overline{v_{K_6^*}}}|.
\]
Then, by Parseval's identity in space and time, we have
\begin{equation} \label{II_term_Parseval}
|II_{K_1,K_2,K_3,K_4,K_5,K_6}|\lesssim \frac{1}{(K^*_1)^{2\alpha}}\int_{\R}\int_{ \R^d} F_1 \overline{F_2} F_3 \overline{F_4} F_5 \overline{F_6} \,dx\,dt.    
\end{equation}

 To proceed, we consider the two cases, $d=2$ and $d=3$, separately.
 
 First, for $d=2$, we use H\"{o}lder's inequality with exponents $(2, 8, 8,8,8)$ to deduce that (\ref{II_term_Parseval}) is
\begin{equation} \label{II_term_Holder_1}
\lesssim  \frac{1}{(K^*_1)^{2\alpha}} \|F_1 F_3\|_{L_{t,x}^{2}} \, \|F_2\|_{L_{t,x}^{8}} \,  \|F_4\|_{L_{t,x}^8} \, \|F_5\|_{L_{t,x}^8} \, \|F_6\|_{L_{t,x}^8}.    
\end{equation}
Then, by Propositions \ref{Proposition_2.5.d.alpha},  \ref{Proposition_3.1_d.alpha} and \ref{Proposition_3.4_d.alpha} and Corollary \ref{Strichartz_Xsb_embedding},  (\ref{II_term_Holder_1}) is
\begin{align*}
& \lesssim  \frac{1}{(K_1^*)^{2\alpha }}\frac{1}{(K_1^*)^{(\alpha -\frac{1}{2})-}}
\|\D v\|_{X^{0,\frac12+}}\|v\|_{X^{\alpha,\frac12+}} (K_2^*)^{\frac{3}{4}-\frac{\alpha}{4}}  \|\mathcal{D}v\|_{X^{0,\frac{1}{2}+}} (K_1^*)^{3(\frac{3}{4}-\frac{\alpha}{4} - \frac{1}{2})}  \| v\|_{X^{\alpha,\frac{1}{2}+}}^3 \\[6pt]
& \lesssim  \frac{1}{(K_1^*)^{(4\alpha - 2)-}}
\|\D v\|_{X^{0,\frac12+}}^2   \, \|v\|_{X^{\alpha,\frac12+}}^4 \\[6pt]
& \lesssim  \frac{1}{(K_1^*)^{(4\alpha - 2)-}} E_2(u_0),
\end{align*}
and by summing over all $K_j$'s and using (\ref{Kj_conditions}) we conclude that
\begin{equation} \label{II_2D}
    |II| \lesssim  \frac{1}{N^{(4\alpha - 2)-}} E_2(u_0).
\end{equation}
Thus we need $4\alpha -  2 >0$, that is $ \alpha > \frac{1}{2} $. 

For the case $d=3$, H\"{o}lder's inequality with exponents $(\frac{10}{3}-, \frac{10}{3}+, 10,10,10,10)$ implies that (\ref{II_term_Parseval}) is
\begin{equation} \label{II_term_Holder_2}
   \lesssim  \frac{1}{(K^*_1)^{2\alpha}} \|F_1\|_{L_{t,x}^{\frac{10}{3}-}} \|F_2\|_{L_{t,x}^{\frac{10}{3}+}} \, \|F_3\|_{L_{t,x}^{10}} \,  \|F_4\|_{L_{t,x}^{10}} \, \|F_5\|_{L_{t,x}^{10}} \, \|F_6\|_{L_{t,x}^{10}}.  
\end{equation}
Therefore, by Propositions  \ref{Proposition_3.1_d.alpha} and \ref{Proposition_3.4_d.alpha}, Corollary \ref{Strichartz_Xsb_embedding}, (\ref{II_term_Y_bounds_plus_minus})  and (\ref{chi_Xsb}), (\ref{II_term_Holder_2}) is 
\begin{align*}
& \lesssim  \frac{1}{(K_1^*)^{2\alpha }} (K_1^*)^{\frac{6}{5}(1-\alpha)+} \|\mathcal D v\|_{X^{0,\frac{1}{2}+}}^2  (K_1^*)^{4(\frac{12}{10}- \frac{2\alpha}{10}-\alpha )}  \| v\|_{X^{\alpha,\frac{1}{2}+}}^4 \\[6pt]
& \lesssim  \frac{1}{(K_1^*)^{(8\alpha-6)- }}  \|\mathcal D v\|_{X^{0,\frac{1}{2}+}}^2   \| v\|_{X^{\alpha,\frac{1}{2}+}}^4 \\[6pt]
& \lesssim  \frac{1}{(K_1^*)^{(8\alpha-6)- }} E_2(u_0),
\end{align*}
and, as before, by summing over all $K_j$'s and using (\ref{Kj_conditions}) we conclude that
\begin{equation} \label{II_3D}
    |II| \lesssim   \frac{1}{N^{(8\alpha-6)- }} E_2(u_0).
\end{equation}
Thus we need $8\alpha-6 >0 $, i.e. $\alpha > \frac{3}{4}$. 

We note that the above analysis only considers the cases $N_1^* \sim N_2^* \gg N_3^*, N_4^*$ and $K_1^* \sim K_2^* \gg K_3^*, K_4^*, K_5^*, K_6^*$, because every other case gives the same or a better bound. To see this, let us consider the term $I_{N_1,N_2,N_3,N_4}$ defined in (\ref{I_N1N2N3N4}) with $N_1^* \sim N_2^* \geq N_3^*, N_4^*$. Note that since $(\xi_1,\xi_2,\xi_3,\xi_4) \in \Gamma_4$, we always have $N_1^* \sim N_2^*$, thus there are three cases to consider:
\[
N_1^* \sim N_2^* \sim N_3^* \sim N_4^*, \qquad N_1^* \sim N_2^* \sim N_3^* \gg N_4^* \qquad \text{ and} \qquad N_1^* \sim N_2^* \gg N_3^*, N_4^*.
\]

For any of the above cases, we have
\begin{align}
  & |I_{N_1,N_2,N_3,N_4}|  \notag \\[6pt]
  & \lesssim  \frac{1}{N_1^*}
\Big(\frac{(N_3^*)^{\frac{d}{2}-\frac{1}{2}}}{(N_1^*)^{\alpha -\frac{1}{2}}}\|\D v_{N_1^*}\|_{X^{0,\frac12+}}\|\nabla v_{N_3^*}\|_{X^{0,\frac12+}}\Big)
\Big(\frac{(N_4^*)^{\frac{d}{2}-\frac{1}{2}}}{(N_2^*)^{(\alpha -\frac{1}{2})-}}\|\D v_{N_2^*}\|_{X^{0,\frac12+}}\|v_{N_4^*}\|_{X^{0,\frac12+}}\Big) \notag \\[6pt]
&\sim \frac{1}{N_1^*}
\Big(\frac{(N_3^*)^{\frac{d}{2}-\frac{1}{2}}}{(N_1^*)^{\alpha -\frac{1}{2}}}\|\D v_{N_1^*}\|_{X^{0,\frac12+}} (N_3^*)^{1-\alpha} \|v_{N_3^*}\|_{X^{\alpha,\frac12+}}\Big)
\Big(\frac{(N_4^*)^{\frac{d}{2}-\frac{1}{2}}}{(N_2^*)^{(\alpha -\frac{1}{2})-}}\|\D v_{N_2^*}\|_{X^{0,\frac12+}}  (N_4^*)^{-\alpha} \|v_{N_4^*}\|_{X^{\alpha,\frac12+}}\Big)  \notag \\[6pt]
& =   \frac{1}{N_1^*} \;
\frac{\|\D v_{N_1^*}\|_{X^{0,\frac12+}}  \|\D v_{N_2^*}\|_{X^{0,\frac12+}} \|v_{N_3^*}\|_{X^{\alpha,\frac12+}}
   \|v_{N_4^*}\|_{X^{\alpha,\frac12+}}}{(N_1^*)^{\alpha -\frac{1}{2}} (N_2^*)^{(\alpha -\frac{1}{2})-} (N_3^*)^{-(\frac{d}{2}-\frac{1}{2} + 1 - \alpha)}  (N_4^*)^{-(\frac{d}{2}-\frac{1}{2} - \alpha)}}    
  \notag \\[6pt]
& =   \frac{1}{(N_1^*)^{f_0(\alpha,d)}} \;
\frac{\|\D v_{N_1^*}\|_{X^{0,\frac12+}}  \|\D v_{N_2^*}\|_{X^{0,\frac12+}} \|v_{N_3^*}\|_{X^{\alpha,\frac12+}}
   \|v_{N_4^*}\|_{X^{\alpha,\frac12+}}}{(N_1^*)^{f_1(\alpha,d)} (N_2^*)^{f_2(\alpha,d)} (N_3^*)^{f_3(\alpha,d)}  (N_4^*)^{f_4(\alpha,d)}}   ,  \label{I_last}
\end{align}
where
\[
f_0(\alpha,d) := 1, \enspace  f_1(\alpha,d) := \alpha -\frac{1}{2}, \enspace  f_2(\alpha,d) := (\alpha -\frac{1}{2})-, \enspace  f_3(\alpha,d) := -(\frac{d}{2}-\frac{1}{2} + 1 - \alpha), \enspace  f_4(\alpha,d) := -(\frac{d}{2}-\frac{1}{2} - \alpha).
\]

Now, suppose first that  $N_1^* \sim N_2^* \sim N_3^* \sim N_4^*$. Then, (\ref{I_last}) is
\begin{align*}
& \sim   \frac{1}{(N_1^*)^{f_0(\alpha,d)}} \;
\frac{\|\D v_{N_1^*}\|_{X^{0,\frac12+}}  \|\D v_{N_2^*}\|_{X^{0,\frac12+}} \|v_{N_3^*}\|_{X^{\alpha,\frac12+}}
   \|v_{N_4^*}\|_{X^{\alpha,\frac12+}}}{(N_1^*)^{f_1(\alpha,d)} (N_1^*)^{f_2(\alpha,d)} (N_1^*)^{f_3(\alpha,d)}  (N_1^*)^{f_4(\alpha,d)}}     \\[6pt]
& \sim \frac{1}{(N_1^*)^{L_4}} \;  \|\D v_{N_1^*}\|_{X^{0,\frac12+}}  \|\D v_{N_2^*}\|_{X^{0,\frac12+}} \|v_{N_3^*}\|_{X^{\alpha,\frac12+}}
   \|v_{N_4^*}\|_{X^{\alpha,\frac12+}},
\end{align*}
where
\[
L_4 = L_4(\alpha,d) := f_0(\alpha,d) + f_1(\alpha,d) + f_2(\alpha,d) + f_3(\alpha,d) + f_4(\alpha,d)  = (4\alpha - d)-.
\]

Summing over all $N_j$ and using (\ref{Nj_conditions}) yields
\begin{equation*} 
    |I| \lesssim \frac{1}{N^{L_4 -}} E_2(u_0)  = \frac{1}{N^{(4\alpha - d)-}} E_2(u_0),
\end{equation*}
assuming $L_4- >0$.

If however $N_1^* \sim N_2^* \sim N_3^* \gg N_4^*$, then $(N_4^*)^{-f_4} \lesssim  (N_1^*)^{\max (0,-f_4)} =(N_1^*)^{ -\min (0,f_4)}$. Therefore, in this case, (\ref{I_last}) is
\begin{align*}
& \lesssim  \frac{1}{(N_1^*)^{f_0(\alpha,d)}} \;
\frac{\|\D v_{N_1^*}\|_{X^{0,\frac12+}}  \|\D v_{N_2^*}\|_{X^{0,\frac12+}} \|v_{N_3^*}\|_{X^{\alpha,\frac12+}}
   \|v_{N_4^*}\|_{X^{\alpha,\frac12+}}}{(N_1^*)^{f_1(\alpha,d)} (N_1^*)^{f_2(\alpha,d)} (N_1^*)^{f_3(\alpha,d)}  (N_1^*)^{\min(0,f_4(\alpha,d))}}    \\[6pt]
& \sim \frac{1}{(N_1^*)^{L_3}} \|\D v_{N_1^*}\|_{X^{0,\frac12+}}  \|\D v_{N_2^*}\|_{X^{0,\frac12+}} \|v_{N_3^*}\|_{X^{\alpha,\frac12+}}
   \|v_{N_4^*}\|_{X^{\alpha,\frac12+}},
\end{align*}
where
\[
L_3 = L_3(\alpha,d) : = f_0(\alpha,d) + f_1(\alpha,d) + f_2(\alpha,d) + f_3(\alpha,d) + \min(0,f_4(\alpha,d))  = (3\alpha - \frac{d}{2} -\frac{1}{2})+\min(0,   \frac{1}{2} + \alpha - \frac{d}{2}) -,
\]
and thus
\begin{equation*} 
    |I| \lesssim \frac{1}{N^{L_3 -}} E_2(u_0),
\end{equation*}
whenever $L_3->0$.

If now $N_1^* \sim N_2^* \gg N_3^*, N_4^*$, a similar analysis yields that  (\ref{I_last}) is
\begin{align*}
  & |I_{N_1,N_2,N_3,N_4}|  \lesssim \frac{1}{(N_1^*)^{L_2}} \|\D v_{N_1^*}\|_{X^{0,\frac12+}}  \|\D v_{N_2^*}\|_{X^{0,\frac12+}} \|v_{N_3^*}\|_{X^{\alpha,\frac12+}}
   \|v_{N_4^*}\|_{X^{\alpha,\frac12+}},
\end{align*}
where 
\begin{align*}
 L_2= L_2(\alpha,d)  := & f_0(\alpha,d) + f_1(\alpha,d) + f_2(\alpha,d) + \min(0,f_3(\alpha,d)) + \min(0,f_4(\alpha,d))   \\[4pt]
  = & (3\alpha - \frac{d}{2} -\frac{1}{2})+\min(0,   \frac{1}{2} + \alpha - \frac{d}{2}) -,   
\end{align*}
which further implies that
\begin{equation*} 
    |I| \lesssim \frac{1}{N^{ L_2 -}} E_2(u_0),
\end{equation*}
whenever $L_2->0$.

Assuming the same $\epsilon$-loss, we clearly have that
\[
L_4(\alpha,d)- \geq L_3(\alpha,d)- \geq L_2(\alpha,d)-,
\]
for all $\alpha \in (\frac{1}{2},1]$ and $d\geq 1$, and thus, if $L_2(\alpha,d)->0$ then $L_3(\alpha,d)->0$ and $L_4(\alpha,d)->0$.  This shows that we may only consider the frequency ordering case $N_1^* \sim N_2^* \gg N_3^*, N_4^*$ as any other case gives either the same or a better rate of decay for $I$, as well as a larger range for $\alpha$. Note that, in this particular example, $f_3(\alpha,d)\leq 0$ for $\alpha \in (\frac{1}{2},1]$ and $d\geq 1$, which is why $L_2 = L_3$, and hence the cases $N_1^* \sim N_2^* \sim N_3^* \gg N_4^*$ and $N_1^* \sim N_2^* \gg N_3^*, N_4^*$ give the same rate of decay for $I$ and the same range for $\alpha$.

The same argument holds for any number of frequencies $N_1, \ldots, N_k$, where $k \geq 2$ is even. To see this, first note that since $(\xi_1, \ldots, \xi_k) \in \Gamma_k$, we always have that $N_1^*\sim N_2^*$, thus there are $k-1$ cases to consider, specifically
\[
N_1^* \sim N_2^* \sim \ldots \sim N_m^* \gg N_{m+1}^*, \ldots, N_k^*, \qquad 2\leq m \leq k.
\]

Let us consider the general term
\begin{align*}
A := & \frac{1}{(N_1^*)^{g_0(\alpha,d)}} \int_{\tau_1+ \ldots + \tau_k=0} \int_{\Gamma_k}
\,|\reallywidetilde{\D v}(\tau_1,\xi_1)| \, |\reallywidetilde{ \chi \D \overline{v}}(\tau_2,\xi_2)|  \notag \\[6pt]
& \hspace{6cm} |\widetilde{v}(\tau_3,\xi_3)| 
\, \ldots 
\, |\widetilde{\overline{v}}(\tau_k,\xi_k)|\; d\Gamma_k(\xi)\, d\Gamma_k(\tau),    
\end{align*}
and define
\begin{align*}
A_{N_1,N_2,\ldots,N_k} := & \frac{1}{(N_1^*)^{g_0(\alpha,d)}} \int_{\tau_1+ \ldots + \tau_k=0} \int_{\Gamma_k}
\,|\reallywidetilde{\D v_{N_1^*}}(\tau_1,\xi_1)| \, |\reallywidetilde{ \chi \D \overline{v_{N_2^*}}}(\tau_2,\xi_2)|  \notag \\[6pt]
& \hspace{6cm} |\widetilde{v_{N_3^*}}(\tau_3,\xi_3)| 
\, \ldots 
\, |\widetilde{\overline{v_{N_k^*}}}(\tau_k,\xi_k)|\; d\Gamma_k(\xi)\, d\Gamma_k(\tau), \label{A_N1Nk}    
\end{align*}  
with $N_1^* \sim N_2^* \sim \ldots \sim N_m^* \gg N_{m+1}^*, \ldots, N_k^*$, for some $2\leq m \leq k$. Then, an analysis similar to the one above gives
\begin{align*}
  & |A_{N_1,N_2,\ldots,N_k}|  \\[6pt]
& \lesssim   \frac{1}{(N_1^*)^{g_0(\alpha,d)}} \;
\frac{ \|\D v_{N_1^*}\|_{X^{0,\frac12+}}  \|\D v_{N_2^*}\|_{X^{0,\frac12+}} \|v_{N_3^*}\|_{X^{\alpha,\frac12+}}
  \ldots   \|v_{N_k^*}\|_{X^{\alpha,\frac12+}}  }{(N_1^*)^{g_1(\alpha,d)} (N_2^*)^{g_2(\alpha,d)} \ldots   (N_k^*)^{g_k(\alpha,d)}}    \; 
 \\[6pt]
& \lesssim  \frac{1}{(N_1^*)^{g_0(\alpha,d)}} \;
\frac{\|\D v_{N_1^*}\|_{X^{0,\frac12+}}  \|\D v_{N_2^*}\|_{X^{0,\frac12+}} \|v_{N_3^*}\|_{X^{\alpha,\frac12+}}
  \ldots   \|v_{N_k^*}\|_{X^{\alpha,\frac12+}} }{(N_1^*)^{g_1(\alpha,d)} (N_1^*)^{g_2(\alpha,d)} \ldots (N_m^*)^{g_m(\alpha,d)}  (N_{m+1}^*)^{\min(0,g_{m+1}(\alpha,d))} \ldots (N_{k}^*)^{\min(0,g_{k}(\alpha,d))}  }     \\[6pt]
& \sim \frac{1}{(N_1^*)^{L_m}}  \|\D v_{N_1^*}\|_{X^{0,\frac12+}}  \|\D v_{N_2^*}\|_{X^{0,\frac12+}} \|v_{N_3^*}\|_{X^{\alpha,\frac12+}}
  \ldots   \|v_{N_k^*}\|_{X^{\alpha,\frac12+}} ,
\end{align*}
for some $g_0(\alpha,d), g_1(\alpha,d), \ldots, g_k(\alpha,d)  \in \R$,  where
\begin{align*} 
L_m = L_{m}(\alpha,d) & := g_0(\alpha,d) + g_1(\alpha,d) + g_2(\alpha,d)   + \ldots + g_{m}(\alpha,d)  \\[4pt]
& \hspace{5cm} + \min(0,g_{m+1}(\alpha,d))  + \ldots + \min(0,g_{k}(\alpha,d)) ,     
\end{align*}
with $2\leq m \leq k$, and thus
\begin{equation*} 
    |A| \lesssim \frac{1}{N^{L_m -}} E_2(u_0),
\end{equation*}
whenever $L_m->0$. In this case, assuming the same $\epsilon$-loss, we clearly have that
\[
 L_{ m}(\alpha,d)- \geq L_{2}(\alpha,d)-,
\]
for any $2 \leq m \leq k$, $\alpha \in (\frac{1}{2},1]$  and $d\geq 1$ . Thus, we may only consider the case $N_1^* \sim N_2^* \gg N_3^*, \ldots, N_k^*$ as any other case gives either the same or a better rate of decay, as well as a larger range for $\alpha$.  

The same argument holds if $A$ were instead given by
\begin{align*}
A := & \frac{1}{(N_1^*)^{g_0(\alpha,d)}} \int_{\tau_1+ \ldots + \tau_k=0} \int_{\Gamma_k}
\,|\reallywidetilde{ \chi \D v}(\tau_1,\xi_1)| \, |\reallywidetilde{  \D \overline{v}}(\tau_2,\xi_2)|  \notag \\[6pt]
& \hspace{6cm} |\widetilde{v}(\tau_3,\xi_3)| 
\, \ldots 
\, |\widetilde{\overline{v}}(\tau_k,\xi_k)|\; d\Gamma_k(\xi)\, d\Gamma_k(\tau).    
\end{align*}
In this case, the corresponding contribution $A_{N_1, N_2, \ldots, N_k}$ of $A$  is  equal to (\ref{II_example_term}) for $k=6$ and $g_0(\alpha,d) = 2\alpha$.

Taking this into account, by combining the estimates (\ref{I_2D}), (\ref{I_3D}), (\ref{II_2D}), and (\ref{II_3D}), the claim follows.
\end{proof}

\begin{proof}[Proof of Theorem \ref{Main_Theorem}]
  Theorem \ref{Main_Theorem} follows by an iteration argument. More specifically, by Lemma \ref{Lemma_3.5_fNLS}, we have that
\begin{equation} \label{iteration_bound}
E_2(u(t_0 + \delta)) \leq \left( 1 + \frac{C}{N^{\kappa-}} \right) E_2(u(t_0)), 
\end{equation}
where $\kappa = \frac{3}{2}(2\alpha - 1)$ for $d=2$, and  $\kappa = (4\alpha-3)$ for $d=3$. By iterating (\ref{iteration_bound}) $m  \sim N^{\kappa - }$ times and  using the elementary inequality $1+x \leq e^x$ for $x\geq 0$, we obtain 
\begin{equation*} 
E_2(u(t_0 + m \delta)) \leq C_\delta   E_2(u(t_0)),
\end{equation*}
where $C_\delta$ is independent of $N$. Hence, for a time interval of length $T \sim N^{\kappa -}$ we get
\[
\|\D u(T)\|_{L^2} \lesssim \|\D u_0\|_{L^2},
\]
which, by (\ref{D_scaling}), becomes
\[
\|u(T)\|_{H^s} \lesssim N^s \|u_0\|_{H^s}.
\]

Since $T \sim N^{\kappa -}$, we conclude that
\[
\|u(T)\|_{H^s} \lesssim T^{\frac{1}{\kappa} s+} \|u_0\|_{H^s} \lesssim (1+T)^{\frac{1}{\kappa} s+} \|u_0\|_{H^s}.
\]
This shows Theorem \ref{Main_Theorem} for large times. For small times, the result follows by local well-posedness, while the extension to negative times follows by time reversibility.
\end{proof}

\begin{remark}
    As in  \cite{Sohinger_Hartree_2D}, one may try to take advantage of the angle constraint that appears in our analysis by proving a 
    modified version of \cite[Lemma 8.2]{CKSTT_Resonant} in $X_{\alpha}^{s,b}$ spaces. However, such a result is only useful when the angle can be taken small, which is not the case for $\alpha \in (\frac{1}{2},1)$ and $d\geq 2$, as  (\ref{angle_constraint}) suggests.
\end{remark}

\section{Appendix}  \label{Appendix}  

\subsection{Proof of Lemma \ref{Sufficient_and_necessary_nonperiodic}} 

Before discussing the proof of Lemma \ref{Sufficient_and_necessary_nonperiodic}, let us first recall that (\ref{important_inequality}) is equivalent to
\begin{equation} \label{important_inequality_equivalent}
\left| |\xi_1|^{2\alpha} - |\xi_2|^{2\alpha} + |\xi_3|^{2\alpha} - |\xi_{123}|^{2\alpha} \right| \gtrsim \frac{|\xi_{12}|\; |\xi_{23}|}{(|\xi_1| + |\xi_2| + |\xi_3|)^{2-2\alpha}},  
\end{equation}
where $\alpha \in (\frac{1}{2},1]$ and $(\xi_1, \xi_2, \xi_3, \xi_{123}) \in \Omega_{nr}$, and $\xi_{123}$ is defined in (\ref{xi_ijpk}). Since the problem arises when $\xi_{12}$ and $\xi_{23}$ are orthogonal, it is natural to first seek such a counterexample. Indeed,  for
\begin{equation} \label{equation_counterexample}
\begin{cases}
    \xi_1  = (x,y), \\
    \xi_2  = (\zeta_1,\zeta_2), \\
    \xi_3  = (-\zeta_2,-\zeta_1), \\
\end{cases} 
\quad \quad \text{ such that }  \quad \quad  
\begin{cases}
    x-y = \zeta_2-\zeta_1, \\
    y \neq -\zeta_2,  \\
    \zeta_1\neq \zeta_2,
\end{cases}     
\end{equation}
we have $(\xi_1+\xi_2) \cdot (\xi_2 +\xi_3)  = 0$, and $|\xi_1| = |\xi_{123}|, |\xi_2| = |\xi_3|$, and therefore the LHS of  (\ref{important_inequality_equivalent}) is zero. On the other hand
\[
|\xi_1+\xi_2| \, |\xi_2+\xi_3| = (\sqrt{2} |\zeta_2+y|) (\sqrt{2} |\zeta_1-\zeta_2|) =  2 |\zeta_2+y|\, |\zeta_1-\zeta_2|,
\]
which is non-zero by assumption, and hence the RHS of (\ref{important_inequality_equivalent}) is non-zero. 

Note that the example (\ref{equation_counterexample}) (as well as the example (\ref{equation_counterexample_2}) below) applies also to higher dimensions if one extends the vectors $\xi_1, \xi_2$ and $\xi_3$ with zero entries.  Moreover,  observe that, in example (\ref{equation_counterexample}), $(\xi_1+\xi_2)$ and $(\xi_2 +\xi_3)$ are orthogonal, but (\ref{important_inequality_equivalent}) can fail even when they are not orthogonal. Consider for instance
\begin{equation} \label{equation_counterexample_2}
    \xi_1  = (M,0), \qquad 
    \xi_2  = (M,2M), 
    \qquad \xi_3  = (-2M+\epsilon,-M),    
\end{equation}
where $0<\epsilon \ll 1 \ll M$ and $\epsilon$ is fixed. Then, it is not hard to see that for any $\alpha>0$,
\[ 
\frac{\textup{LHS of }(\ref{important_inequality_equivalent})}{\textup{RHS of }(\ref{important_inequality_equivalent})}
\ \longrightarrow\ 0
\qquad \text{as }  \qquad M\to\infty.
\]
Indeed, the LHS and RHS of (\ref{important_inequality_equivalent}) scale like $M^{2\alpha -1}$ and $M^{2\alpha}$, respectively, thus their quotient has asymptotic order $M^{-1} \ra 0$, as $M \ra + \infty$.

Therefore, it is natural to investigate whether a positive lower bound on $|\cos \angle(\xi_1+\xi_2, \xi_2 +\xi_3)|$ is required for (\ref{important_inequality_equivalent}) to hold. In the case $\alpha =1$, it is easy to see that (\ref{important_inequality_equivalent}) holds with $|\cos \angle(\xi_1+\xi_2, \xi_2 +\xi_3) | \geq \gamma$ if  $\gamma>0$. This follows since
\[
 |\xi_1|^{2} - |\xi_2|^{2} + |\xi_3|^{2} - |\xi_1 + \xi_2 + \xi_3|^{2}  = - 2 (\xi_1+\xi_2) \cdot (\xi_2 + \xi_3),
\]
for any $\xi_1, \xi_2, \xi_3 \in \R^d$. It is not clear, however, if this is also the case when $\alpha <1$.

We continue with the proof of Lemma \ref{Sufficient_and_necessary_nonperiodic}. The main ingredient is the double mean value theorem, which says that, for any $f \in C^2(\R^d)$, we have
\begin{equation} \label{double_MVT}
f(\xi+ \eta + \lambda) - f(\xi + \eta) - f(\xi +\lambda) + f(\xi) =  \int_0^1 \int_0^1  \eta^{\mathsf T} \nabla^2f(\xi + s\lambda + t \eta) \lambda \; ds \, dt.    
\end{equation}
The proof of (\ref{double_MVT}) follows by applying the one-dimensional mean value theorem twice. In our application, however, the function we are interested in, namely $x \ra |x|^{2\alpha}$, where $\alpha \in (\frac{1}{2},1]$, is not twice differentiable at $0$. Thus, a weaker version of the double mean value theorem is needed that treats such singularities. First, we need the following two facts.

\begin{lemma} \label{i_and_ii}
\begin{enumerate}[
    label=\textbf{ (\roman*)}, ref=(\roman*), leftmargin=0.7cm]
    \item \label{Fact_1} Let $d\geq 1$, let $\beta\in[0,1)$, and let $\eta\in\R^d\setminus\{0\}$. Then, for every $y\in\R^d$,
\[
\int_0^1 |y+s\eta|^{-\beta}\,ds
\lesssim  |\eta|^{-\beta}.
\]
\item \label{Fact_2} Let $d\geq 1$, let $u:[0,1]\to\R^d$ be continuous, and assume that there exists $c\in[0,1]$ such that $u\in C^1([0,1]\setminus\{c\})$ and $
u'\in L^1(0,1)$, where $u'$ is the weak derivative of $u$. Then
\[
u(1)-u(0)=\int_0^1 u'(s)\,ds.
\]
\end{enumerate}
\end{lemma}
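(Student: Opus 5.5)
Part (i) is proved by reducing to a one-dimensional integral. We decompose $y = y_\parallel + y_\perp$, where $y_\parallel$ is parallel to $\eta$ and $y_\perp \perp \eta$. Then $|y+s\eta| \geq |y_\parallel + s\eta|$, and since $\beta \geq 0$ it suffices to bound $\int_0^1 |y_\parallel + s\eta|^{-\beta}\,ds$. Write $y_\parallel = a\,\eta/|\eta|$ with $a \in \R$. The integrand becomes $|a + s|\eta||^{-\beta}$, and the substitution $r = a + s|\eta|$ gives
\[
\int_0^1 |y+s\eta|^{-\beta}\,ds \leq \frac{1}{|\eta|}\int_{a}^{a+|\eta|} |r|^{-\beta}\,dr .
\]
The function $|r|^{-\beta}$ is symmetric decreasing, so among intervals of length $|\eta|$ the integral $\int_I |r|^{-\beta}\,dr$ is maximised by the interval centred at the origin. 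That integral equals $\frac{2}{1-\beta}(|\eta|/2)^{1-\beta}$. Dividing by $|\eta|$ yields a bound $C_\beta |\eta|^{-\beta}$ with $C_\beta = 2^{\beta}/(1-\beta)$, which is finite because $\beta<1$. This gives (i), and the constant depends only on $\beta$.

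For part (ii) we treat each component separately, so we may take $d=1$. Assume first that $c \in (0,1)$. For small $\epsilon>0$, $u$ is $C^1$ on $[0,c-\epsilon]$ and on $[c+\epsilon,1]$. The classical fundamental theorem of calculus therefore gives
\[
u(c-\epsilon)-u(0)=\int_0^{c-\epsilon}u'\,ds, \qquad u(1)-u(c+\epsilon)=\int_{c+\epsilon}^1 u'\,ds .
\]
Adding the two identities, we get
\[
u(1)-u(0) - \bigl(u(c+\epsilon)-u(c-\epsilon)\bigr) = \int_{[0,1]\setminus(c-\epsilon,c+\epsilon)} u'\,ds .
\]
Now let $\epsilon \to 0$. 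The bracketed jump term tends to $0$ because $u$ is continuous at $c$. The right-hand side tends to $\int_0^1 u'\,ds$ by dominated convergence, which applies since $u' \in L^1(0,1)$. This proves the identity.

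The endpoint cases $c\in\{0,1\}$ are handled the same way, using only one of the two truncated intervals. Here the pointwise classical derivative on $[0,1]\setminus\{c\}$ agrees a.e. with the weak derivative, so the integrals above are unambiguous.

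We do not expect a real obstacle in either part. The only point needing care is that in (i) the constant must be uniform in $y$. The rearrangement argument, placing the interval of integration symmetrically about the singularity, is what secures this. This uniformity is presumably what the later weak double mean value theorem needs, when applied to $\nabla^2|x|^{2\alpha} \sim |x|^{2\alpha-2}$ with $\beta = 2-2\alpha \in [0,1)$.
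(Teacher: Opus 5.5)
Your proposal is correct and follows the paper's proof essentially step by step. In (i), the paper uses the same decomposition of $y$ along and orthogonal to $\eta$ to reduce to a one-dimensional integral. Your rearrangement argument simply makes explicit the paper's unproved ``easy'' bound $\int_0^1 |s+a|^{-\beta}\,ds \le \frac{2}{1-\beta}$, with the slightly better constant $2^{\beta}/(1-\beta)$. In (ii), the paper likewise excises $(c-\epsilon,c+\epsilon)$, applies the fundamental theorem of calculus on the two remaining pieces, and passes to the limit using continuity of $u$ at $c$ and $u'\in L^1(0,1)$.
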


 \begin{proof}
We first show (i). We write 
\[
y=y_\perp + r \frac{\eta}{|\eta|},
\qquad y_\perp\cdot \frac{\eta}{|\eta|}=0,
\qquad r\in\R.
\]
Then
\[
|y+s\eta|^2 = |y_\perp|^2 + |r+s|\eta||^2 \geq |r+s|\eta||^2.
\]
Therefore,
\[
|y+s\eta|^{-\beta}
\leq |\eta|^{-\beta}\,\left| s+\frac{r}{|\eta|}\right|^{-\beta},
\]
and thus
\begin{equation} \label{i_inequality}
\int_0^1 |y+s\eta|^{-\beta}\,ds
\leq |\eta|^{-\beta}\int_0^1 |s+a|^{-\beta}\,ds,    
\end{equation}
where $a:=r/|\eta|$. Using that $\beta \in [0,1)$, it is then easy to show that the integral in (\ref{i_inequality}) is bounded by $\frac{2}{1-\beta}$. This  proves (i).

For (ii), we may assume without loss of generality $d=1$. 
We consider the case $c\in(0,1)$, with $c \in \{0,1\}$ following similarly. Now, for small enough $\epsilon>0$, we have
\[
u(c-\epsilon)-u(0)=\int_0^{c-\epsilon} u'(s)\,ds, \qquad u(1)-u(c+\epsilon)=\int_{c+\epsilon}^1 u'(s)\,ds.
\]
Hence
\begin{equation} \label{ii_first}
u(1)-u(0)
=
\int_0^{c-\epsilon} u'(s)\,ds
+
\int_{c+\epsilon}^1 u'(s)\,ds
+
[u(c+\epsilon)-u(c-\epsilon)].    
\end{equation}

Since $u$ is continuous at $c$, we have
\begin{equation} \label{ii_1}
u(c+\epsilon)-u(c-\epsilon)\to 0
\qquad\text{as }\epsilon\to 0.    
\end{equation}

Also, since $u'\in L^1(0,1)$,
\begin{equation} \label{ii_2}
\int_0^{c-\epsilon} u'(s)\,ds + \int_{c+\epsilon}^1 u'(s)\,ds
\to \int_0^1 u'(s)\,ds
\qquad\text{as }\epsilon\to 0.    
\end{equation}
The claim then follows by letting $\epsilon \ra 0$ in (\ref{ii_first}) and using (\ref{ii_1}) and (\ref{ii_2}).
\end{proof}

 Using  Lemma \ref{i_and_ii}, we can show the following version of the double mean value theorem.

\begin{proposition}[Double MVT with singularity]
\label{double_MVT_with_singularity}
Let $d\geq 1$ and $f:\R^d \ra \R$. Assume that
\begin{itemize}
\item[a)] $f\in C^1(\R^d)\cap C^2(\R^d\setminus\{0\})$,
\item[b)] there exist $C>0$, $r_0>0$, and $\beta\in[0,1)$ such that
\begin{equation}
\|\nabla^2f(x)\|\leq C|x|^{-\beta},
\qquad\text{whenever } \enspace 0<|x|<r_0. \label{double_MVT_Corollary_D2_condition}    
\end{equation}
\end{itemize}
Then, for every $\xi,\eta,\lambda\in\R^d$,
\[
f(\xi+\eta+\lambda)-f(\xi+\eta)-f(\xi+\lambda)+f(\xi)  =
\int_0^1\int_0^1  
\eta^{\mathsf T}  \nabla^2f(\xi+t\lambda+s\eta) \lambda\; ds\, dt,
\]
with the convention that the integrand is set equal to $0$ whenever
$\xi+t\lambda+s\eta=0$.
\end{proposition}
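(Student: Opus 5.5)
The plan is to apply the one-dimensional fundamental theorem of calculus twice, once in each parameter, and to use Lemma \ref{i_and_ii} to handle the singularity of $\nabla^2 f$ at the origin. Fix $\xi,\eta,\lambda$; if $\eta=0$ or $\lambda=0$ both sides vanish, so assume both are nonzero. Define
\[
g(t):=f(\xi+t\lambda+\eta)-f(\xi+t\lambda),\qquad t\in[0,1].
\]
Since $f\in C^1$, $g$ is $C^1$ and, by the chain rule, $g'(t)=\nabla f(\xi+t\lambda+\eta)\cdot\lambda-\nabla f(\xi+t\lambda)\cdot\lambda$. The left-hand side equals $g(1)-g(0)=\int_0^1 g'(t)\,dt$. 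It therefore suffices to show, for each fixed $t$ (or almost every $t$), that
\[
\nabla f(y+\eta)\cdot\lambda-\nabla f(y)\cdot\lambda=\int_0^1 \eta^{\mathsf T}\nabla^2 f(y+s\eta)\lambda\,ds,\qquad y:=\xi+t\lambda.
\]

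For this inner identity, set $u(s):=\nabla f(y+s\eta)\cdot\lambda$ for $s\in[0,1]$. Then $u$ is continuous because $f\in C^1$. The segment $s\mapsto y+s\eta$ passes through $0$ for at most one value $c$, since $\eta\neq0$. Away from $c$, $u$ is $C^1$ because $f\in C^2(\R^d\setminus\{0\})$, and there $u'(s)=\eta^{\mathsf T}\nabla^2f(y+s\eta)\lambda$. To apply Lemma \ref{i_and_ii}\ref{Fact_2}, I need $u'\in L^1(0,1)$. Near the singular point, where $|y+s\eta|<r_0$, hypothesis (\ref{double_MVT_Corollary_D2_condition}) gives $|u'(s)|\le C|\eta||\lambda||y+s\eta|^{-\beta}$. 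This is integrable by Lemma \ref{i_and_ii}\ref{Fact_1}, with bound $\lesssim |\eta|^{1-\beta}|\lambda|$. On the set where $|y+s\eta|\ge r_0$, the Hessian is continuous on a compact set, hence bounded. Thus $u'\in L^1$, and Lemma \ref{i_and_ii}\ref{Fact_2} yields the inner identity. The convention of setting the integrand to $0$ at the zero is harmless, since it concerns a single point.

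Integrating over $t\in[0,1]$ then gives the claimed double integral. The one point needing care, and the main obstacle, is that the iterated integral $\int_0^1\int_0^1$ must make sense: the integrand should be measurable and absolutely integrable on $[0,1]^2$, so that the order of integration matches the statement. I would handle this with a uniform version of the bound above. The estimate from Lemma \ref{i_and_ii}\ref{Fact_1} is uniform in $y$, and the Hessian is bounded on the compact region $\{r_0\le|x|\le|\xi|+|\eta|+|\lambda|\}$. Together these give
\[
\sup_t\int_0^1|\eta^{\mathsf T}\nabla^2f(\xi+t\lambda+s\eta)\lambda|\,ds<\infty.
\]
Tonelli then justifies the iterated integral, and Fubini allows either order of integration. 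Measurability holds because the integrand is continuous off the set $\{\xi+t\lambda+s\eta=0\}$, which is a line segment of measure zero in $[0,1]^2$.
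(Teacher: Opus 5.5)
Your proposal is correct and follows the paper's proof essentially step by step. You apply the fundamental theorem of calculus in $t$ to $f(\xi+t\lambda+\eta)-f(\xi+t\lambda)$, and you get the inner identity along $s\mapsto \xi+t\lambda+s\eta$ from Lemma~\ref{i_and_ii}\ref{Fact_2}, with integrability near the (at most one) zero supplied by hypothesis (b), compactness, and Lemma~\ref{i_and_ii}\ref{Fact_1}. Your extra Tonelli/Fubini remark is harmless but not needed: the stated integral is iterated with $ds$ inside, and that inner integral is exactly the continuous function $g'(t)$.
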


\begin{proof}

First, if $\eta=0$ or $\lambda=0$, the claim follows trivially, thus we may assume that $\eta,
\lambda\neq 0$. Now, set
\[
\Gamma:=\{\xi+t\lambda+s\eta:\ (s,t)\in[0,1]^2\}.
\]

Since $\Gamma$ is compact and $f \in C^2(\R^d \setminus \{ 0\})$, 
\[
M:=\sup\bigl\{\|\nabla^2f(x)\|:\ x\in\Gamma,\ |x|\geq r_0\bigr\} < \infty.
\]
Therefore, by (\ref{double_MVT_Corollary_D2_condition}), for every $x\in\Gamma\setminus\{0\}$ we have
\begin{equation} \label{D2_bound}
\|\nabla^2f(x)\|
\leq
M + C|x|^{-\beta}.    
\end{equation}

Now, fix $t\in[0,1]$, and define
\[
u_t(s):=\nabla f(\xi+t\lambda+s\eta),
\qquad s\in[0,1].
\]
Since $f\in C^1(\R^d)$, the map $u_t$ is continuous on $[0,1]$. Because $\eta\neq 0$, for each fixed $t \in [0,1]$ the equation $\xi+t\lambda+s\eta=0$ has at most one solution in $[0,1]$. If no such solution exists, we can deduce immediately that
\begin{equation} \label{equation_u_t}
\nabla f(\xi+t\lambda+\eta)-\nabla f(\xi+t\lambda)
=
\int_0^1 \nabla^2f(\xi+t\lambda+s\eta)\, \eta \; ds  
\end{equation}
by the fundamental theorem of calculus.  If there exists such $c_t \in [0,1]$, then
$u_t\in C^1([0,1]\setminus\{c_t\};\R^d)$, and for every $s\in[0,1]\setminus\{c_t\}$,
\[
u_t'(s)=\nabla^2f(\xi+t\lambda+s\eta)\,\eta.
\]

Therefore, by (\ref{D2_bound}) and Lemma \ref{i_and_ii} (i), we deduce
\begin{align*}
\int_0^1 |u_t'(s)|\,ds
&\leq M|\eta| + C|\eta|\int_0^1 |\xi+t\lambda+s\eta|^{-\beta}\,ds \lesssim  |\eta| + |\eta|^{1-\beta} < \infty,
\end{align*}
hence $u_t'\in L^1(0,1)$. Thus, by Lemma \ref{i_and_ii} (ii), we conclude again (\ref{equation_u_t}). 

Now, set
\[
v(t):=f(\xi+\eta+t\lambda)-f(\xi+t\lambda),
\qquad t\in[0,1].
\]
Since $f\in C^1(\R^d)$, we have $v\in C^1([0,1])$, and 
\[
v'(t)=\int_0^1  
\eta^{\mathsf T} \nabla^2f(\xi+t\lambda+s\eta) \lambda\; ds
\]
for every $t \in [0,1]$. From the one-dimensional fundamental theorem of calculus, it follows that
\[
v(1)-v(0) = f(\xi+\eta+\lambda)-f(\xi+\eta)-f(\xi+\lambda)+f(\xi)  =
\int_0^1\int_0^1  
\eta^{\mathsf T} \nabla^2f(\xi+t\lambda+s\eta) \lambda \; ds \, dt.
\]
This completes the proof.
\end{proof}

We can now use Proposition \ref{double_MVT_with_singularity} to prove Lemma \ref{Sufficient_and_necessary_nonperiodic}.

\begin{proof}[Proof of Lemma \ref{Sufficient_and_necessary_nonperiodic}]

 We may always assume that $\eta,\lambda \neq 0$ and $\alpha \in (\frac{1}{2},1)$, otherwise the inequality is trivial. For simplicity, we first assume $d=2$. Let $p := 2\alpha$ and  $f(x)=|x|^p$, $x \in \R^d$. By Proposition  \ref{double_MVT_with_singularity}, we have
    \[
   \Phi = \Phi(\xi,\eta,\lambda) := f(\xi + \eta + \lambda) - f(\xi + \eta) - f(\xi + \lambda) + f(\xi) = \int_0^1  \int_0^1 \eta^{\mathsf T}   \nabla^2 f (\zeta(t,s))   \lambda \; ds d t,
    \]
where $\zeta := \zeta(t,s) := \xi+ t \eta + s \lambda$. Since
\[
\nabla^2 f(\zeta)=p\,|\zeta|^{p-2}\pt{I+(p-2) \frac{\zeta \zeta^{\mathsf T}}{|\zeta|^2}}, \qquad \zeta \neq 0, 
\]
 we may write $\Phi  = p c(\eta \cdot \lambda - (2-p) \eta^{\mathsf T} B \lambda)$, where
\[
c :=  \int_0^1  \int_0^1 |\zeta(t,s)|^{p-2} \; ds \, dt \; \in \R \quad  \text{ and } \quad   B :=  \frac{1}{c} \int_0^1 \int_0^1 \frac{\zeta(t,s) \zeta^{\mathsf T}(t,s) |\zeta(t,s)|^{p-2}}{|\zeta(t,s)|^2} \; ds \, dt \;  \in M_{2\times 2}(\R).
\]

We now analyse the term $\eta^{\mathsf T} B \lambda$. Let $\widehat{\zeta} := \frac{\zeta}{|\zeta|}$. For each $(t,s) \in [0,1]^2$ we have
\[
|(\eta\cdot\widehat\zeta(t,s))(\lambda\cdot\widehat\zeta(t,s))|
\leq \sup_{|u|=1}|(\eta\cdot u)(\lambda\cdot u)|.
\]
Therefore,
\begin{equation} \label{eta_B_lambda_sup}
 |\eta^{\mathsf T}B\lambda|
\leq \frac{1}{c} \int_0^1 \int_0^1|(\eta\cdot\widehat\zeta)(\lambda\cdot\widehat\zeta)| |\zeta|^{p-2} \; ds \, dt
\leq \sup_{|u|=1}|(\eta\cdot u)(\lambda\cdot u)|.    
\end{equation}

To estimate the RHS of (\ref{eta_B_lambda_sup}), we may assume without loss of generality that 
\[
\eta=|\eta|(1,0)\qquad \text{ and } \qquad \lambda=|\lambda|(\cos\theta,\sin\theta),
\]
where $\theta := \cos^{-1}(\gamma_0) \in [-\frac{\pi}{2}, \frac{\pi}{2}]$ and  $\gamma_0 = \gamma_0(\eta, \lambda) := \frac{|\eta \cdot \lambda|}{|\eta| \, |\lambda|}$. Let $u \in \R^2$ with $|u|=1$, and write 
\[
u = (\cos\nu_u, \sin\nu_u).
\]
Then
\[
\eta \cdot u = |\eta| \cos\nu_u, \qquad   \lambda\cdot u
= |\lambda|\big(\cos\theta\cos\nu_u + \sin\theta\sin\nu_u\big)
= |\lambda|\cos(\nu_u-\theta),
\]
hence
\begin{align*}
|(\eta\cdot u)(\lambda\cdot u)| =  |\eta| \, |\lambda| \, |\cos(\nu_u) \cos(\nu_u - \theta)|
 = \frac{|\eta| \, |\lambda|}{2} \,| \cos(\theta) + \cos(2\nu_u - \theta) |.
\end{align*}
This quantity attains its maximum when $\nu_u$ is chosen so that $\cos(2\nu_u - \theta) = 1$, therefore
\[
\sup_{|u|=1}|(\eta\cdot u)(\lambda\cdot u)| = \frac{1+\gamma_0}{2} |\eta| \, |\lambda|,
\]
which then implies that 
\[ 
(2-p) |\eta^{\mathsf T} B \lambda| \leq (2-p) \frac{1+\gamma_0}{2} |\eta| \, |\lambda|.
\]

Therefore, for $S= S(\xi,\eta,\lambda):= |\xi|+|\eta|+|\lambda|$ we obtain
\begin{align}
|\Phi| & \geq p\,c\Big(|\eta\cdot\lambda|-(2-p)\,|\eta^{\mathsf T}B\lambda|\Big) \notag \\[6pt]
& \geq p\,S^{\,p-2}\Big(|\eta\cdot\lambda|-(2-p)\frac{1+\gamma_0(\eta,\lambda)}{2}\,|\eta|\,|\lambda|\Big) \notag \\[6pt]
& \geq p\,S^{\,p-2}\,|\eta|\,|\lambda|\;\frac{p\gamma-(2-p)}{2}, \label{Omega_last}
\end{align}
where the last line follows from $\gamma_0(\eta,\lambda)\geq \gamma$. For (\ref{Omega_last}) to be positive, we need
\[
\displaystyle \frac{p\gamma-(2-p)}{2} > 0 \quad\Longleftrightarrow\quad \gamma> \frac{1-\alpha}{\alpha},
\]
 as desired. This shows the sufficiency of (\ref{angle_constraint}). 

 Conversely, for the sake of contradiction, suppose that there exists $\gamma_0 \in (0,1]$ with $\gamma_0 \leq \frac{1-\alpha}{\alpha}$ such that (\ref{important_inequality}) holds under (\ref{vector_angle_constraint}) with $\gamma = \gamma_0$. For $\nu \in [-\frac{\pi}{2},\frac{\pi}{2}]$ to be determined later, define
\[
A(\nu):=\begin{pmatrix}\cos\nu & -\sin\nu\\[4pt]\sin\nu & \cos\nu\end{pmatrix},\quad
\eta :=\begin{pmatrix}1\\[2pt]0\end{pmatrix},\quad
\lambda=\begin{pmatrix}\gamma_0\\[2pt]\sigma\end{pmatrix},\quad
\sigma := \sqrt{1-\gamma_0^2}, \quad
\xi_n := A(\nu)\begin{pmatrix}n\\[2pt]0\end{pmatrix}
=\begin{pmatrix}n\cos\nu\\[2pt]n\sin\nu\end{pmatrix}.
\]

 For $(t,s)\in[0,1]^2$ define
\[
\zeta_n(t,s) := \xi_n+t\eta+s\lambda
= \begin{pmatrix} n\cos\nu + t + s\gamma_0\\[4pt] n\sin\nu + s\sigma\end{pmatrix},
\]
and additionally set
\[
c_n := \int_0^1 \int_0^1 |\zeta_n(t,s)|^{\,p-2}\,ds\,dt,
\qquad
B_n := \frac{1}{c_n} \int_0^1 \int_0^1 |\zeta_n(t,s)|^{\,p-2}\,\widehat{\zeta_n}(t,s)\widehat{\zeta_n}(t,s)^{\mathsf T}\,ds\,dt,
\]
where $\widehat{\zeta_n}:=\frac{\zeta_n}{|\zeta_n|}$. Finally, let
\[
D_n := \gamma_0 - (2-p)\eta^{\mathsf T}B_n\lambda  , \qquad \Phi_n \;:=\; p\,c_n D_n, \qquad  S_n := |\xi_n|+|\eta|+|\lambda|.
\]

To obtain a contradiction, we need to show that  $|\Phi_n| S_n^{2-p} \ra 0$. First, we have
\[
\zeta_n(t,s) = n v(\nu) + O(1),\quad \text{ where } \quad v(\nu):=(\cos\nu,\sin\nu),
\]
uniformly in $(t,s) \in [0,1]^2$, hence
\[
|\zeta_n(t,s)|^{p-2}=n^{p-2}(1+o(1)), 
\qquad \text{ and } \qquad
\widehat{\zeta_n}(t,s)\widehat{\zeta_n}(t,s)^{\mathsf T}
=
v(\nu)v(\nu)^{\mathsf T}+o(1),
\]
uniformly in $(t,s) \in [0,1]^2$. Moreover,
\begin{equation} \label{cn_convergence}
 c_n = \iint_{[0,1]^2} |\zeta_n(t,s)|^{p-2}\,ds\,dt = n^{p-2}\big(1+o(1)\big),   
\end{equation}
thus  $B_n = v(\nu)v(\nu)^{\mathsf T}+o(1)$ in operator norm, and therefore
\begin{equation}  \label{lambdaBneta_convergence}
\eta^{\mathsf T}B_n\lambda\ra (\eta\cdot v(\nu))(\lambda\cdot v(\nu)) = \cos\nu\big(\gamma_0\cos\nu+\sigma\sin\nu\big).    
\end{equation}

Let $G(\nu) := \cos\nu\big(\gamma_0\cos\nu+\sigma\sin\nu\big)$. Setting $(\gamma_0,\sigma)=(\cos\phi,\sin\phi)$ with $\phi=\arctan(\sigma/\gamma_0)$ and using elementary trigonometric identities, we can write
\begin{equation} \label{formula_G}
G(\nu)=\frac{\gamma_0}{2}+\frac12\cos(2\nu-\phi).    
\end{equation}
Pick $\nu \in [- \frac{\pi}{2}, \frac{\pi}{2}]$ such that
\begin{equation} \label{G_equation}
 \gamma_0 - (2-p)G(\nu)=0 \quad \iff \quad G(\nu)=\frac{\gamma_0}{2-p}.   
\end{equation}
Note that, by (\ref{formula_G}), a solution of (\ref{G_equation}) exists if and only if
\[
\left|\gamma_0\cdot\frac{p}{2-p}\right|\leq 1 \quad\Longleftrightarrow\quad \gamma_0 \leq \frac{1-\alpha}{\alpha},
\]
as assumed. By (\ref{lambdaBneta_convergence}) and the choice of $\nu$, we have $D_n = o(1)$. Thus, by (\ref{cn_convergence}) and since in addition $S_n^{2-p} = O(n^{2-p})$, we deduce that
\[
|\Phi_n| S_n^{2-p} = p \, c_n \, S_n^{2-p}\, |D_n| = p \, n^{p-2} \,(1+o(1)) \, O(n^{2-p}) \, o(1) =  O(1) \, o(1) = o(1),
\]
which contradicts  (\ref{important_inequality}), as desired. Thus, we have shown the necessity of (\ref{angle_constraint}).  

For higher dimensions, the proof of sufficiency is identical, while for necessity, we define
\[
A(\nu)=
\begin{pmatrix}
\cos\nu & -\sin\nu & 0\\
\sin\nu & \cos\nu & 0\\
0& 0&I_{d-2}
\end{pmatrix},
\quad \eta=e_1,\quad \lambda=\gamma_0 e_1+\sigma e_2, \qquad
\xi_n=A(\nu)(n e_1),
\]
where
\[
e_1 := (1,0, \ldots, 0), \qquad e_2 := (0,1,0,\ldots, 0),
\]
and obtain a contradiction by the same argument. This completes the proof.
\end{proof}

\subsection{The Kato-Ponce inequality} In this section, we prove Proposition \ref{KP_extension_Hs_Linfty}.  The proof follows from the classical Kato-Ponce inequality for Schwartz functions \cite{GrafakosOh, DongLi} and a density argument. A similar result was shown in \cite[Theorem 1.3]{BadrBernicotRuss} in a more general setting.

First, by \cite[Theorem 1.9, (1.15)]{DongLi} with $p_1=p_2=2$ and $q_1=q_2=+\infty$ we have
\begin{equation}\label{KP_Schwartz}
\||\nabla|^\sigma(fg)\|_{L^2}
\lesssim 
\|f\|_{L^\infty}\||\nabla|^\sigma g\|_{L^2}
+
\||\nabla|^\sigma f\|_{L^2}\|g\|_{L^\infty},
\end{equation}
for $f,g \in \mathcal{S}(\R^d)$ and $\sigma\geq 0$.  To extend (\ref{KP_Schwartz}) to functions in $H^\sigma(\R^d) \cap L^\infty(\R^d)$ we need the following approximation lemma.

\begin{lemma}\label{Hs_Linfinity_approximation}
Let $\sigma\geq 0$ and  $h\in H^\sigma(\R^d)\cap L^\infty(\R^d)$. Then there exists a sequence $h_n\in C_c^\infty(\R^d)$ such that
\begin{equation}\label{Hs_limit}
h_n\to h
\qquad\text{in }H^\sigma(\R^d),
\end{equation}
and
\begin{equation}\label{Hs_Linfinity_bounded}
\|h_n\|_{L^\infty}\leq \|h\|_{L^\infty}
\qquad\text{for all } \quad n\geq 1.
\end{equation}
\end{lemma}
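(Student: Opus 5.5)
The plan is to combine a smooth cutoff, a mollification, and a truncation. Each of these three operations either preserves the $L^\infty$ bound or is a contraction in $L^\infty$. The $H^\sigma$ convergence is the only thing that needs real work. Fix $\rho\in C_c^\infty(\R^d)$ with $\rho\geq 0$ and $\int\rho=1$, and set $\rho_\epsilon(x)=\epsilon^{-d}\rho(x/\epsilon)$. Fix $\psi\in C_c^\infty(\R^d)$ with $0\leq\psi\leq 1$ and $\psi=1$ on $B(0,1)$, and set $\psi_R(x)=\psi(x/R)$. The candidate approximants are
\[
h_{R,\epsilon}:=\rho_\epsilon*(\psi_R h),
\]
which lie in $C_c^\infty(\R^d)$ because $\psi_R h$ has compact support. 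Young's inequality with $\|\rho_\epsilon\|_{L^1}=1$ and $0\leq\psi_R\leq1$ give
\[
\|h_{R,\epsilon}\|_{L^\infty}\leq\|\psi_R h\|_{L^\infty}\leq\|h\|_{L^\infty}.
\]
This is exactly (\ref{Hs_Linfinity_bounded}), with no truncation needed.

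The convergence $\rho_\epsilon*g\to g$ in $H^\sigma$ for fixed $g\in H^\sigma$ is immediate on the Fourier side. Indeed,
\[
\|\rho_\epsilon*g-g\|_{H^\sigma}^2=\int\langle\xi\rangle^{2\sigma}|\widehat\rho(\epsilon\xi)-1|^2|\widehat g(\xi)|^2\,d\xi,
\]
and this tends to $0$ by dominated convergence, since $|\widehat\rho|\leq 1$ and $\widehat\rho(0)=1$. It also shows $\|\rho_\epsilon*g\|_{H^\sigma}\leq\|g\|_{H^\sigma}$. So it suffices to prove that $\psi_R h\to h$ in $H^\sigma$ as $R\to\infty$. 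Once this holds, a diagonal choice $\epsilon=\epsilon(R)$ produces the sequence $h_n$.

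The main obstacle is this cutoff step for non-integer $\sigma$. For $\sigma\in\N$ it follows from the Leibniz rule. Write $\partial^\beta(\psi_R h)-\partial^\beta h=(\psi_R-1)\partial^\beta h+\sum_{\gamma<\beta}c\,R^{-|\beta-\gamma|}(\partial^{\beta-\gamma}\psi)(\cdot/R)\,\partial^\gamma h$. The first term tends to $0$ in $L^2$ by dominated convergence, and each remaining term is $O(R^{-1})\|h\|_{H^\sigma}$. For general $\sigma\geq0$ I would first show a uniform bound $\|\psi_R h\|_{H^\sigma}\lesssim\|h\|_{H^\sigma}$, with the constant independent of $R\geq1$. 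This holds at integer levels by the computation above, and then for all $\sigma$ by complex interpolation, since $h\mapsto\psi_R h$ is a linear operator with $R$-uniform bounds on $H^{k}$ and $H^{k+1}$. Next I would approximate $h$ in $H^\sigma$ by a Schwartz function $g$. For such $g$, $\psi_R g\to g$ in $H^{k}$ for every integer $k$, hence in $H^\sigma$. A $3\epsilon$ argument then gives
\[
\|\psi_R h-h\|_{H^\sigma}\lesssim\|h-g\|_{H^\sigma}+\|\psi_R g-g\|_{H^\sigma},
\]
which can be made arbitrarily small. Note that the approximating $g$ need not be bounded by $\|h\|_{L^\infty}$: it only enters the convergence argument, while the $L^\infty$ bound is carried by $h_{R,\epsilon}$ itself.
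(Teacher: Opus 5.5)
Your proposal is correct and follows essentially the same route as the paper. Both arguments use the same approximants $\rho_\epsilon*(\psi_R h)$, the same $L^\infty$ bound, the Fourier-side mollifier argument, and an $R$-uniform cutoff bound combined with density to get $\psi_R h\to h$ in $H^\sigma$.

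The one real difference is how you prove $\|\psi_R u\|_{H^\sigma}\lesssim\|u\|_{H^\sigma}$. You use the integer-order Leibniz rule and then complex interpolation. The paper gets this bound directly for every $\sigma\geq 0$: it writes $\widehat{\chi_n u}$ as a convolution and applies $\langle\xi\rangle^\sigma\lesssim\langle\eta\rangle^\sigma\langle\xi-\eta\rangle^\sigma$ together with Minkowski's inequality, which is more elementary.

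A smaller difference is that your diagonal choice $\epsilon(R)$ replaces the paper's single-index argument, which uses the triangle inequality and the $H^\sigma$-contractivity of the mollifier.
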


\begin{proof}
Choose $\chi,\rho \in C_c^\infty(\R^d)$ such that
\[
0\leq \chi\leq 1,
\qquad
\chi(x)=1 \quad\text{for } |x|\leq 1, \qquad
\chi(x)=0 \quad\text{for } |x|\geq 2, \qquad  \rho\geq 0,
\qquad
\int_{\R^d}\rho(x)\,dx=1,
\]
and let us define 
\[
\chi_n(x):=\chi(x/n), \qquad \rho_n(x):=n^d\rho(nx), \qquad \text{and} \qquad h_n:=\rho_n*(\chi_n h).
\]

By definition, $h_n\in C_c^\infty(\R^d)$. Moreover, for every $x \in \R^d$, we have that
\[
\begin{aligned}
|h_n(x)|
&=
\left|
\int_{\R^d}\rho_n(y)\chi_n(x-y)h(x-y)\,dy
\right|  \leq
\|h\|_{L^\infty}
\int_{\R^d}\rho_n(y)\,dy =
\|h\|_{L^\infty}.
\end{aligned}
\]
 This proves (\ref{Hs_Linfinity_bounded}).

We now prove (\ref{Hs_limit}). First, we have 
\begin{equation}\label{hn_h_decomposition}
\|h_n-h\|_{H^\sigma}
\leq
\|\rho_n*(\chi_n h)-\chi_n h\|_{H^\sigma}
+
\|\chi_n h-h\|_{H^\sigma}.
\end{equation}
We show that both terms on the RHS of (\ref{hn_h_decomposition}) tend to zero. We begin with the cutoff term, namely, we prove that
\begin{equation}\label{cutoff_convergence}
\chi_n h\to h
\qquad\text{in } \enspace H^\sigma(\R^d).
\end{equation}

First, we claim that
\begin{equation}\label{uniform_cutoff_bound}
\|\chi_n u\|_{H^\sigma}
\leq
C_\sigma \|u\|_{H^\sigma}
\qquad
\text{for all }u\in H^\sigma(\R^d).
\end{equation}
Indeed, using the identity $\widehat{\chi_n u}
= c_d \,\widehat{\chi_n}*\widehat u$, the fact that $\langle \xi\rangle^\sigma
\lesssim_\sigma \langle \eta\rangle^\sigma\langle \xi-\eta\rangle^\sigma$ and Minkowski's inequality, we obtain
\begin{align*}
  \|\chi_n u\|_{H^\sigma}  & \lesssim_\sigma \left\|
\int_{\R^d} \langle \eta\rangle^\sigma|\widehat{\chi_n}(\eta)| \langle \xi -\eta\rangle^\sigma|\widehat u(\xi-\eta)|\,d\eta \right\|_{L^2_\xi} \lesssim_\sigma
\int_{\R^d}
\langle \eta\rangle^\sigma|\widehat{\chi_n}(\eta)|
\left\|
\langle \xi-\eta\rangle^\sigma \widehat u(\xi-\eta)
\right\|_{L^2_\xi}
\,d\eta  \\[6pt]
& \sim_\sigma
\left(
\int_{\R^d}
\langle \eta\rangle^\sigma|\widehat{\chi_n}(\eta)|\,d\eta
\right)
\|u\|_{H^\sigma} \lesssim_\sigma  \|u\|_{H^\sigma},  
\end{align*}
 for any $u \in H^\sigma(\R^d)$, where $C_\sigma>0$ is independent of $n$. This proves \eqref{uniform_cutoff_bound}. 

Now, fix $\epsilon>0$ and let $\varphi\in C_c^\infty(\R^d)$
such that
\[
\|h-\varphi\|_{H^\sigma}<\epsilon.
\]
Since $\varphi$ has compact support, $\chi_n\varphi=\varphi$ for large $n$. Therefore,
\[
\limsup_{n\to\infty}\|\chi_n h-h\|_{H^\sigma}
\leq \limsup_{n\to\infty} ( \|\chi_n(h-\varphi)\|_{H^\sigma}
+
\|\chi_n\varphi-\varphi\|_{H^\sigma}
+
\|\varphi-h\|_{H^\sigma}) \leq
(C_\sigma+1)\epsilon.
\]
Since $\epsilon>0$ was arbitrary, we conclude (\ref{cutoff_convergence}). 

Next, we prove that
\begin{equation}\label{mollification_term}
\|\rho_n*(\chi_n h)-\chi_n h\|_{H^\sigma}\to 0.
\end{equation}
First note that, for any $u\in H^\sigma(\R^d)$, 
\begin{equation}\label{fixed_mollification}
\qquad 
\|\rho_n*u\|_{H^\sigma}
\leq
\|u\|_{H^\sigma}, \qquad\text{ and }  \qquad \rho_n*u\to u
\quad\text{in }H^\sigma(\R^d).
\end{equation}

By (\ref{fixed_mollification}) and (\ref{cutoff_convergence}), we deduce that
\begin{align}
\|\rho_n*(\chi_n h)-\chi_n h\|_{H^\sigma}
& \leq
\|\rho_n*((\chi_n-1)h)\|_{H^\sigma}
+
 \|\rho_n*h-h\|_{H^\sigma}
+
\|(\chi_n-1)h\|_{H^\sigma} \\[6pt]
& \leq \|\rho_n*h-h\|_{H^\sigma}
+
2\|(\chi_n-1)h\|_{H^\sigma} \ra 0.
\end{align}
 This proves (\ref{mollification_term}). By (\ref{cutoff_convergence}) and (\ref{mollification_term}), we conclude that $h_n \ra h$ in $H^\sigma(\R^d)$. This completes the proof.
\end{proof}

\begin{proof}[Proof of Proposition \ref{KP_extension_Hs_Linfty}]

For $\sigma=0$, the claim follows trivially by H\"{o}lder's inequality, thus we may assume that $\sigma>0$. By Lemma~\ref{Hs_Linfinity_approximation}, there exist $f_n,g_n\in C_c^\infty(\R^d)$ such that
\begin{equation}\label{fn_gn_Hs_convergence}
f_n\to f,
\qquad
g_n\to g
\qquad\text{in }H^\sigma(\R^d),
\end{equation}
and
\begin{equation}\label{fn_gn_Linfty_bound}
\|f_n\|_{L^\infty}\leq \|f\|_{L^\infty},
\qquad
\|g_n\|_{L^\infty}\leq \|g\|_{L^\infty}
\qquad\text{for every } n \geq 1.
\end{equation}

By (\ref{KP_Schwartz}) and (\ref{fn_gn_Linfty_bound}), we obtain 
\begin{equation*}
\||\nabla|^\sigma(f_n g_n)\|_{L^2} \lesssim \|f\|_{L^\infty}\||\nabla|^\sigma g_n\|_{L^2} + \||\nabla|^\sigma f_n\|_{L^2}\|g\|_{L^\infty}.
\end{equation*}

Let $u_n:=f_n g_n$ and $ u:=fg$. By Plancherel's theorem, $\widehat{u_n}\to \widehat u$  in $L^2(\R^d)$, thus we may pass to a further subsequence such that $\widehat{u_{n_k}}(\xi)\to \widehat u(\xi)$ for a.e. $\xi\in\R^d$. By Fatou's lemma,

\begin{align}
\||\nabla|^\sigma u\|_{L^2}^2 = \int_{\R^d}
|\xi|^{2\sigma}|\widehat u(\xi)|^2\,d\xi \leq \liminf_{k\to\infty} \int_{\R^d} |\xi|^{2\sigma}|\widehat{u_{n_k}}(\xi)|^2\,d\xi
= \liminf_{k\to\infty} \||\nabla|^\sigma u_{n_k}\|_{L^2}^2. \label{Fatou_inequality}
\end{align}
Therefore, by (\ref{Fatou_inequality}) and (\ref{KP_Schwartz}) we have
\[
\begin{aligned}
\||\nabla|^\sigma(fg)\|_{L^2} &\leq \liminf_{k\to\infty}\||\nabla|^\sigma(f_{n_k} g_{n_k})\|_{L^2} \lesssim \liminf_{k\to\infty} (
\|f\|_{L^\infty}\||\nabla|^\sigma g_{n_k}\|_{L^2} + \||\nabla|^\sigma f_{n_k}\|_{L^2}\|g\|_{L^\infty}).
\end{aligned}
\]
Since $n_k\ra \infty$ and using (\ref{fn_gn_Hs_convergence}), we deduce (\ref{KP_Hs_Linfty}). Note that this also shows that $fg \in H^\sigma(\R^d)$. This completes the proof.
\end{proof}

\subsection{Proof of Proposition \ref{Proposition_3.1_d.alpha}} 

In this section, we prove Proposition \ref{Proposition_3.1_d.alpha}. Unlike the standard linear Schr\"{o}dinger equation with $\alpha=1$, there is no exact Galilean invariance in the fractional case $\alpha \in (\frac{1}{2},1)$. This was also observed in \cite[page 4]{HongSire}. Therefore, unlike the proof of  \cite[Proposition 3.1]{Sohinger_Hartree_2D},  which considers (\ref{equation_fractional_hartree}) for $\alpha=1$ and relies on  Galilean invariance, we bound the Duhamel operator directly, using the Kato-Ponce inequality  (\ref{KP_Hs_Linfty}).

As in \cite{Sohinger_Hartree_2D}, we use the following localised estimates in $X^{s,b}$.  For $f\in C_0^\infty(\R)$ and   $\delta,b,b' \in \R$ such that
\[
0<\delta\leq1, \quad 0< b'<\frac{1}{2}<b<1, \quad b+b' <1,
\]
it holds that
\begin{align}
    \| f(t/\delta)\,S_\alpha(t)u_0\|_{X^{s,b}} & \lesssim \,\delta^{\frac{1}{2}-b} \|u_0\|_{H^s}, \label{equation_141}  \\[6pt]
      \Big\| f(t/\delta)\int_{t_0}^t S_\alpha(t-t')\,w(t')\,dt' \Big\|_{X^{s,b}}
& \lesssim \,\delta^{1-b-b'} \|w\|_{X^{s,-b'}},  \label{Lemma_A.2.11_Dinh_PhD} \\[6pt]
    \| f(t/\delta) w \|_{X^{s,-b'}} &\lesssim \|w\|_{X^{s,-b'}}, \label{equation_144}
\end{align}
where $S_\alpha$ is defined in (\ref{Salpha_definition}) and the implicit constants depend only on $f,s,b, b'$, but not on $t_0,\delta$ or $\alpha$.
The proof of (\ref{equation_141}) is carried out exactly as \cite[Lemma 3.1]{KenigPonceVega_1993}. For (\ref{Lemma_A.2.11_Dinh_PhD}), see  \cite[Lemma A.2.11]{DinhPhD}, and for  (\ref{equation_144}), see  \cite[Lemma 2.11]{TaoDispersive}. 

We also fix $\chi,\phi\in C_0^\infty(\R)$ with $\chi,\phi \geq 0 $ such that
\begin{align*}
    \chi = 1 \enspace  \text{ on } \enspace  [-1,1], \quad \chi = 0 \enspace  \text{ outside } \enspace  [-2,2], \\[4pt]
    \phi = 1 \enspace  \text{ on } \enspace  [-2,2], \quad \phi = 0 \enspace  \text{ outside } \enspace  [-4,4],
\end{align*}
and denote $\chi_\delta(t) := \chi(t/\delta)$ and $\phi_\delta(t) := \phi(t/\delta)$. We adopt this notation in what follows.

\begin{proof}[Proof of Proposition \ref{Proposition_3.1_d.alpha}]
 We argue by a fixed-point argument. We may assume without loss of generality that 
$t_0 = 0$. We consider
\begin{equation*} \label{equation_125}
Lw= L_\delta w := \chi_\delta(t)S_\alpha(t)u_0 - i\chi_\delta(t)\int_0^t S_\alpha(t-t')(V*|w_\delta|^2)w_\delta(t')\,dt',
\end{equation*}
where $w_\delta := \phi_\delta w$. For $s>\frac{d}{2}$, let
\[
B := \big\{ w : \ \|w\|_{X^{\alpha,b}} \leq 2c\,\delta^{\frac{1}{2}-b} \|u_0\|_{H^{\alpha}}, 
\ \|w\|_{X^{s,b}} \leq 2c\,\delta^{\frac{1}{2}-b} \|u_0\|_{H^{s}} \big\},
\]
where $c>0$ is the implicit constant in (\ref{equation_141}).
Arguing as in  \cite[Proposition 3.2]{Sohinger_S1}, we deduce that $B$ is complete with respect to the $\|\cdot\|_{X^{\alpha,b}}$-norm. For $w\in B$ we have 
\begin{equation*} \label{equation_126}
\|Lw\|_{X^{\alpha,b}} \leq c \, \delta^{\frac{1}{2}-b} \, \|u_0\|_{H^\alpha} + c_1 \, \delta^{1-b-b'}  \,  \|(V*|w_\delta|^2)w_\delta\|_{X^{\alpha,-b'}},
\end{equation*}
and analogously
\begin{equation*} \label{equation_127}
\|\mathcal DLw\|_{X^{0,b}} \leq c \, \delta^{\frac{1}{2}-b}  \, \| \mathcal Du_0\|_{L^2} + c_1  \,\delta^{1-b-b'}\,\|\mathcal D((V*|w_\delta|^2)w_\delta)\|_{X^{0,-b'}}.
\end{equation*}

We now estimate  $\|(V*|w_\delta|^2)w_\delta\|_{X^{s,-b'}}$. Let us define 
\[
\mathcal{N}(f):=(V*|f|^2)f.
\]
Choose $0<b'<\frac{1}{2} < b$ such that $b+b' <1$ and
\begin{equation} \label{b_condition}
 2-3b-b'>0.
\end{equation}
Note that such $b,b'$ can be chosen arbitrarily close to $\frac{1}{2}$. Let us also define $\rho'>1$  by $\frac{1}{\rho'} = \frac{1}{2} + b'$, let $\rho>1$ with $\frac{1}{\rho}+\frac{1}{\rho'} = 1$, and set $p=2\rho'$. By Sobolev embedding in time, we have
\begin{equation} \label{Sobolev_embeddings}
L_t^{\rho'}\hookrightarrow H_t^{-b'}.    
\end{equation}

By Corollary \ref{Strichartz_Xsb_embedding}, we also have
\begin{equation} \label{Corollary_for_Y}
\|f\|_{L_t^pL_x^\infty}\lesssim \|f\|_{X^{\alpha,b}},
\end{equation}
for $\alpha>\frac d2-\frac{2\alpha}{p}$. Since $p$ can be chosen arbitrarily close to $2$ from above,  $\alpha$ can be taken arbitrarily close to $\frac{d}{4}$ in (\ref{Corollary_for_Y}).

We claim that 
\begin{equation} \label{Nonlinearity_bound}
    \|\mathcal N(w_\delta)\|_{X^{\alpha,-b'}}
\lesssim
\|V\|_{L^1}\,
\|w\|_{X^{\alpha,b}}^3.
\end{equation}
First, note that  $\mathcal N(w_\delta) = \phi_\delta^3 \mathcal N(w)$, thus (\ref{equation_144}) implies that 
\begin{equation} \label{Ndelta_to_N}
\|\mathcal N(w_\delta)\|_{X^{\alpha,-b'}} \lesssim \|\mathcal N(w)\|_{X^{\alpha,-b'}}.  
\end{equation}
Since $w \in B$ and $\alpha < s$, we have that $w \in X^{s,b} \subset L_t^\infty H_x^s \subset L_t^\infty H_x^\alpha$, thus $w(t) \in H_x^s \subset H_x^\alpha$ for any $t$. By (\ref{Corollary_for_Y}), we also have that $w \in L_t^p L_x^\infty$, hence $w(t) \in L_x^\infty$ for a.e. $t$. Therefore, it follows by the Kato-Ponce inequality that for a.e. $t$,
\begin{equation}  \label{Nonlinearity_bound_product_Hs}
    \|\mathcal N(w)\|_{H_x^\alpha}
=
\|(V*|w|^2)w\|_{H_x^\alpha}
\lesssim
\|V*|w|^2\|_{L_x^\infty}\|w\|_{H_x^\alpha}
+
\|V*|w|^2\|_{H_x^\alpha}\|w\|_{L_x^\infty}.
\end{equation}
By Young's inequality,
\[ 
\|V*|w|^2\|_{L_x^\infty}
\leq
\|V\|_{L^1}\|w\|_{L_x^\infty}^2.
\]

Moreover, since $\| \widehat{V}\|_{L^\infty} \leq \|V\|_{L^1}$, using again the Kato-Ponce inequality  (\ref{KP_Hs_Linfty}) we get that, for a.e. $t$,
\begin{equation} \label{V_convolution_w2}
\|V*|w|^2\|_{H_x^\alpha}
\leq
\|V\|_{L^1}\||w|^2\|_{H_x^\alpha}
\lesssim
\|V\|_{L^1}\|w\|_{L_x^\infty}\|w\|_{H_x^\alpha}.    
\end{equation}
Substituting (\ref{V_convolution_w2}) into (\ref{Nonlinearity_bound_product_Hs}), we obtain that, for a.e. $t$,
\begin{equation}\label{Nonlinearity_bound_2}
\| \mathcal N(w)(t)\|_{H_x^\alpha}
\lesssim
\|V\|_{L^1}\,
\|w(t)\|_{L_x^\infty}^2
\|w(t)\|_{H_x^\alpha}.
\end{equation}
Taking the $L_t^{\rho'}$ norm in (\ref{Nonlinearity_bound_2}), and using the embeddings (\ref{Sobolev_embeddings}) and (\ref{Corollary_for_Y}), we get
\begin{equation} \label{N_embedding}
\|\mathcal N(w)\|_{X^{\alpha,-b'}} \leq \|\mathcal N(w)\|_{L_t^{\rho'}H_x^\alpha}
\lesssim
\|V\|_{L^1}\,
\|w\|_{L_t^pL_x^\infty}^2
\|w\|_{L_t^\infty H_x^\alpha} \leq \|V\|_{L^1}\,
\|w\|_{X^{\alpha,b}}^3,    
\end{equation}
which, together with (\ref{Ndelta_to_N}), implies (\ref{Nonlinearity_bound}). 

By the same analysis, we can show
\begin{equation}\label{Nonlinearity_bound_difference}
\|\mathcal N(w_\delta)-\mathcal N(z_\delta)\|_{X^{\alpha,-b'}}
\lesssim \|V\|_{L^1}\,
( \|w\|_{X^{\alpha,b}}^2+\|z\|_{X^{\alpha,b}}^2 ) \,
\|w-z\|_{X^{\alpha,b}}
\end{equation}
by using (\ref{equation_144}), the Kato-Ponce inequality  (\ref{KP_Hs_Linfty}), Young's inequality,  and the identities
\[
\mathcal N(w)- \mathcal N(z)
=
(V*|w|^2)(w-z)
+
\bigl(V*(|w|^2-|z|^2)\bigr)z
\]
and 
\[
|w|^2-|z|^2
=
(w-z)\overline{w}
+
z\,\overline{(w-z)}.
\]
Therefore, we deduce that
\begin{align*}
\|Lw\|_{X^{\alpha,b}}
&\leq
c\, \delta^{\frac12-b}\, \| u_0\|_{H^\alpha}
+
\, c_1 \, \delta^{1-b-b'} \, \|\mathcal N(w_\delta)\|_{X^{\alpha,-b'}} \\[4pt]
& \leq 
c \, \delta^{\frac12-b}\, \| u_0\|_{H^\alpha}
+
c_2 \, \delta^{\frac12-b} \, \delta^{\frac{1}{2}-b'} \, \|w\|_{X^{\alpha,b}}^3  \\[4pt]
& \leq  c\delta^{\frac12-b}\| u_0\|_{H^\alpha} + c_3\,  \delta^{\frac12-b}\, \delta^{2-3b-b'}\, \| u_0\|_{H^\alpha}^3.
\end{align*}

Arguing as in (\ref{N_embedding}), we also have that
\[
\|\mathcal N(w)\|_{X^{s,-b'}} \leq \|\mathcal N(w)\|_{L_t^{\rho'}H_x^s}
\lesssim
\|V\|_{L^1}\,
\|w\|_{L_t^pL_x^\infty}^2
\|w\|_{L_t^\infty H_x^s} \leq \|V\|_{L^1}\,
\|w\|_{X^{\alpha,b}}^2 \, \|w\|_{X^{s,b}},
\]
which gives us, in addition, the estimate
\begin{align*}
\|Lw\|_{X^{s,b}}  \leq  c \,\delta^{\frac12-b} \,\| u_0\|_{H^s} + c_4 \, \delta^{\frac12-b}\, \delta^{2-3b-b'} \, \| u_0\|_{H^\alpha}^2 \,\| u_0\|_{H^s}.
\end{align*}
Note that $2-3b-b'>0$ by (\ref{b_condition}). Thus, we can choose $\delta>0$ small enough such that
\[
\delta^{\,2-3b-b'} \, \| u_0\|_{H^\alpha}^2\leq c,
\]
 and so that $\delta = \delta(s,E(u_0), M(u_0))$ depends continuously on $E(u_0)$ and $M(u_0)$. With such a choice of $\delta$,  $L$ maps $B$ into itself.

By (\ref{Nonlinearity_bound_difference}) we also have that
\[
\|Lw - Lz\|_{X^{\alpha,b}} \leq c_5 \, \delta^{1-b-b'} \, \delta^{1-2b}\,  \|u_0\|_{H^\alpha}^2 \|w - z\|_{X^{\alpha,b}},
\]
thus, by taking $\delta>0$ possibly even smaller so that 
\begin{equation*} \label{delta_second_choice}
c_5 \, \delta^{1-b-b'} \, \delta^{1-2b} \,\|u_0\|_{H^\alpha}^2 \leq \frac{1}{2},    
\end{equation*}
the map $L$ becomes a contraction on $B$. By Banach's fixed-point theorem, there exists a unique
$v\in B$ such that $Lv=v$. In particular,
\begin{equation} \label{v_bound}
\|v\|_{X^{\alpha,b}}
\lesssim \delta^{\frac12-b} \, \|u_0\|_{H^\alpha},
\end{equation}
which proves (\ref{47.d.alpha}). Since $\chi_\delta = 1$ and $\phi_\delta =  1$ on $[0,\delta]$, $v$ satisfies (\ref{equation_fractional_hartree}) on $[0,\delta]$. Thus, arguing as in the proof of  \cite[Proposition 3.1]{Sohinger_Hartree_2D} by invoking the Sobolev embedding $H^s \hookrightarrow L^\infty$ with $s> \frac{d}{2}$ and Gr\"{o}nwall's inequality,\footnote{The use of Gr\"{o}nwall's inequality is justified since, in fact, $v \in C_t H^s_x$ by the algebra property of $H^s$, with $s>\frac{d}{2}$.} we deduce that
\[
v|_{[0,\delta]} = u|_{[0,\delta]}.
\]

We have shown (\ref{46.d.alpha}) and (\ref{47.d.alpha}).  For (\ref{48.d.alpha}), first note that $\D(\mathcal N(v_\delta)) = \phi_\delta^3 \D(\mathcal N(v))$, thus (\ref{equation_144}) implies
\[
\|\D(\mathcal N(v_\delta))\|_{X^{0,-b'}}
\lesssim
\|\D(\mathcal N(v))\|_{X^{0,-b'}}.
\]
By Lemma \ref{Kato_Ponce_for_D}, Young's inequality and the identity $\D(V*|v|^2)=V* \D(|v|^2)$, we deduce that, for a.e. $t$,
\begin{align*}
\| \D(\mathcal N(v))\|_{L_x^2} &\lesssim \|V*|v|^2\|_{L_x^\infty}\|\D v\|_{L_x^2} + \|\D(V*|v|^2)\|_{L_x^2}\|v\|_{L_x^\infty} \\[4pt]
&\lesssim \|V\|_{L^1}\|v\|_{L_x^\infty}^2\|\D v\|_{L_x^2}
+ \|V\|_{L^1}\|\D(|v|^2)\|_{L_x^2}\|v\|_{L_x^\infty} \\[4pt]
&\lesssim \|V\|_{L^1}\|v\|_{L_x^\infty}^2\|\D v\|_{L_x^2}.
\end{align*}

Consequently, arguing as in (\ref{N_embedding}) by taking the $L_t^{\rho'}$ norm and using the embeddings (\ref{Sobolev_embeddings}) and (\ref{Corollary_for_Y}), we obtain
\begin{equation*}
\|\mathcal D(\mathcal N(v_\delta))\|_{X^{0,-b'}}
\lesssim
\|V\|_{L^1}\,
\|v\|_{X^{\alpha,b}}^2
\|\mathcal Dv\|_{X^{0,b}}.
\end{equation*}
Therefore, we deduce
\begin{align*}
\|\mathcal Dv\|_{X^{0,b}}
&\leq
c \, \delta^{\frac12-b} \,\|\mathcal Du_0\|_{L^2}
+
c_1 \, \delta^{1-b-b'} \, \|\mathcal D(\mathcal N(v_\delta))\|_{X^{0,-b'}} \\[4pt]
&\leq
c \, \delta^{\frac12-b} \, \|\mathcal Du_0\|_{L^2}
+
c_6 \, \delta^{1-b-b'}\, \|v\|_{X^{\alpha,b}}^2 \, \|\mathcal Dv\|_{X^{0,b}}.
\end{align*}

Using (\ref{v_bound}) and taking $\delta>0$ possibly even smaller, we get 
\begin{equation} \label{Dv_delta}
c_6 \delta^{1-b-b'}\|v\|_{X^{\alpha,b}}^2
\leq
c_7 \,\delta^{2-3b-b'} \, \|u_0\|_{H^\alpha}^2 < 1.    
\end{equation}
Since $v \in X^{s,b}$, it follows that $\D v \in X^{0,b}$. Thus, by (\ref{Dv_delta}) we deduce that 
\[
\|\mathcal Dv\|_{X^{0,b}}
\lesssim
\delta^{\frac12-b}\|\mathcal Du_0\|_{L^2},
\]
which proves (\ref{48.d.alpha}). This completes the proof.
\end{proof}

\begin{remark}
 The assumption $s> \frac{d}{2}$ in (\ref{equation_fractional_hartree}) is only used to prove uniqueness in Proposition \ref{Proposition_3.1_d.alpha}. Although this condition may be relaxed, we do not address this question here.
\end{remark}